\theoremstyle{plain}
\newtheorem{theorem}{Theorem}[section]
\newtheorem{corollary}[theorem]{Corollary}
\newtheorem{claim}[theorem]{Claim}
\newtheorem{lemma}[theorem]{Lemma}
\newtheorem{conjecture}[theorem]{Conjecture}
\newtheorem*{conjecture*}{Conjecture}
\newtheorem*{problem*}{Problem}
\newtheorem*{question*}{Question}
\theoremstyle{definition}
\newtheorem{definition}{Definition}
\DeclareMathOperator{\tr}{tr}
\DeclareMathOperator{\row}{row}
\DeclareMathOperator{\col}{col}
\DeclareMathOperator{\disc}{disc}
\DeclareMathOperator{\herdisc}{herdisc}
\DeclareMathOperator{\D}{D}
\DeclareMathOperator{\R}{R}
\DeclareMathOperator{\Q}{Q}
\DeclareMathOperator{\EQ}{EQ}
\DeclareMathOperator{\surp}{sp}
\DeclareMathOperator{\mc}{mc}
\newcommand{\RomanNumeralCaps}[1]{\MakeUppercase{\romannumeral #1}}
\newcommand{\cF}{\mathcal{F}}
\begin{document}


\title{Factorization norms and an inverse theorem for MaxCut}
    
\author{Igor Balla}
\address{Simons Laufer Mathematical Sciences Institute, 17 Gauss Way, Berkeley, CA 94720.}
\email{iballa1990@gmail.com}
\thanks{The first and third authors acknowledge support by the National Science Foundation under Grant No. DMS-1928930 while the authors were in residence at the Simons Laufer Mathematical Sciences Institute (formerly MSRI) in Berkeley, California, during the Spring 2025 semester.}

\author{Lianna Hambardzumyan}
\address{Department of Computer Science, University of Victoria, Victoria, BC V8P 5C2, Canada.}
\email{liannahambardzumyan@uvic.ca}
\thanks{LH is funded by the Natural Sciences and Engineering Research Council of Canada.}
    
\author{Istv\'an Tomon}
\address{Department of Mathematics and Mathematical Statistics, Ume\r{a} University, Ume\r{a}, 90736, Sweden}
\email{istvan.tomon@umu.se}
\thanks{IT is supported in part by the Swedish Research Council grant VR 2023-03375.}

\sloppy

\begin{abstract}
    We prove that Boolean matrices with bounded $\gamma_2$-norm or bounded normalized trace norm must contain a linear-sized all-ones or all-zeros submatrix, verifying a conjecture of Hambardzumyan, Hatami, and Hatami. We also present further structural results about Boolean matrices of bounded $\gamma_2$-norm and discuss applications in communication complexity, operator theory, spectral graph theory, and extremal combinatorics.

    As a key application, we establish an inverse theorem for MaxCut. A celebrated result of Edwards states that every graph $G$ with $m$ edges has a cut of size at least $\frac{m}{2}+\frac{\sqrt{8m+1}-1}{8}$, with equality achieved by complete graphs with an odd number of vertices. To contrast this, we prove that if the MaxCut of $G$ is at most $\frac{m}{2}+O(\sqrt{m})$, then $G$ must contain a clique of size $\Omega(\sqrt{m})$.
\end{abstract}

\maketitle

\section{Introduction}
For an $m \times n$ matrix $M$, the factorization norm $\gamma_2$ of $M$ is defined as  $$\gamma_2(M)=\min_{U, V : M=UV}\|U\|_{\row}\|V\|_{\col},$$
where $\|U\|_{\row}$ denotes the maximum $\ell_2$-norm of the rows of $U$, and $\|V\|_{\col}$ denotes the maximum $\ell_2$-norm of the columns of $V$. This suggests an intuition that the $\gamma_2$-norm can be viewed as a ``smooth'' version of matrix rank, since the rank is the minimum number of rows of $U$ (which equals to the number of columns of $V$) in a factorization.
Informally, the factorization norm measures how much the action of a matrix $M$ on a vector is distorted  when it is factored through the $\ell_2$ space.

Motivated by connections to communication complexity and combinatorics, this paper investigates the structural properties of $\gamma_2$-norm: given the value of $\gamma_2$-norm for a matrix,  can we structurally characterize the matrix? Conversely, if the matrix has certain structural properties, how large can its $\gamma_2$-norm be? 

Our main result confirms a conjecture of Hambardzumyan, Hatami and Hatami \cite{HHH} which was motivated by an open question in randomized communication complexity (see \cref{sec:cc}). They asked whether a Boolean matrix with bounded $\gamma_2$-norm must contain a linear-sized all-ones or all-zeros submatrix \cite{HHH}. In this work, we answer this question in the affirmative.

\begin{theorem}[Main theorem]\label{thm:mono_rectangle}
    Let $M$ be an $m\times n$ Boolean matrix such that $\gamma_2(M)\leq \gamma$. Then $M$ contains an $\delta_1 m\times \delta_2 n$ all-zeros or all-ones submatrix such that $\delta_1,\delta_2\geq 2^{-O(\gamma^3)}$.
\end{theorem}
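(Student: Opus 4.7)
The plan is to leverage the $\gamma_2$-factorization $M=UV$ with $\|u_i\|\le\gamma$ and $\|v_j\|\le 1$ (so that $M_{ij}=\langle u_i,v_j\rangle\in\{0,1\}$) via a fat-shattering / VC-dimension argument combined with Haussler-style clustering. The starting observation is that the class of linear functionals $\{v\mapsto\langle u_i,v\rangle\}_{i\in[m]}$ on the unit ball of $v$'s has fat-shattering dimension $O(\gamma^2)$ at scale $\tfrac12$ (the standard margin bound for linear threshold functions applied with witnesses $r_k=\tfrac12$). Consequently, the column set system $\{C_j=\{i:M_{ij}=1\}\}_{j\in[n]}$ and, by symmetry, the row set system both have VC dimension $d=O(\gamma^2)$.

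With this bound in hand, I would apply Haussler's packing lemma to the columns: for any $\varepsilon>0$, the $n$ columns can be covered by $O(\varepsilon^{-d})$ Hamming balls of radius $\varepsilon m$, so a pigeonholing yields a cluster $J\subseteq[n]$ of size $|J|\ge n\cdot\varepsilon^{O(\gamma^2)}$ whose columns all lie in a single ball. Fixing a reference column $j^*\in J$, letting $b\in\{0,1\}$ be its majority value, and setting $R_b=\{i:M_{i,j^*}=b\}$, the submatrix $M|_{R_b\times J}$ has $\ge m/2$ rows and at most $\varepsilon m|J|$ entries different from $b$ (since each column of $J$ differs from $j^*$ on at most $\varepsilon m$ rows). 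Choosing $\varepsilon=2^{-c\gamma}$ so that $\log(1/\varepsilon)=\Theta(\gamma)$ gives $|J|\ge n\cdot 2^{-O(\gamma^3)}$, matching the target on the column side.

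The main obstacle is the cleanup: extracting an exactly all-$b$ sub-rectangle of linear dimensions from this $\varepsilon$-sparse submatrix, since density alone is insufficient (e.g.\ random sparse matrices admit no large monochromatic rectangles), so the inherited $\gamma_2\le\gamma$ of the submatrix must be re-used. The natural plan is a second, row-side VC-based clustering using the Boolean rigidity fact that two $\{0,1\}$-rows at Hamming distance $<1$ on $J$ are identical; a cluster of identical rows whose common pattern has only $\le 2\varepsilon|J|$ non-$b$ positions then yields, after discarding those few columns, an exactly all-$b$ sub-rectangle. The delicate point, and where I expect the paper to do the most work, is that a naive second application of Haussler produces a row-cluster of size $|R_b|\cdot|J|^{-O(\gamma^2)}$, carrying an unwanted $n$-dependence. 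To remove this one must either iterate the clustering with a decreasing sequence of parameters so that the density eventually falls below $1/|J|$ (at which point the at most $m$ ``bad'' rows can simply be removed), or invoke a hereditary structural lemma for bounded $\gamma_2$-norm that strengthens the VC-dimension bound. Balancing the two clustering losses gives $\delta_1,\delta_2\ge 2^{-O(\gamma^3)}$, the cubic exponent arising as the product of the VC-dimension bound $O(\gamma^2)$ with the logarithmic separation cost $\log(1/\varepsilon)=\Theta(\gamma)$.
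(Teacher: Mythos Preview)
Your proposal has a genuine gap at exactly the point you flag as ``the main obstacle'': the cleanup from an $\varepsilon$-sparse submatrix to an exactly monochromatic one. Neither of your two suggested fixes works. Iterating sparsification until the density drops below $1/|J|$ costs $2^{-O(\gamma\log|J|)}$ in each dimension, and solving the resulting self-referential constraint gives a final side length of order $n^{\Theta(1/\gamma)}$, not $n\cdot 2^{-O(\gamma^3)}$. The second clustering to find identical rows runs into the same wall: Sauer--Shelah bounds the number of row-types on $J$ by $|J|^{O(\gamma^2)}$, so the largest class has size $\ge m/|J|^{O(\gamma^2)}$, again polynomial rather than linear. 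More fundamentally, your argument up to the cleanup uses only the VC-dimension bound $d=O(\gamma^2)$, but bounded VC dimension is strictly weaker than bounded $\gamma_2$-norm and does \emph{not} imply the strong Erd\H{o}s--Hajnal property (the paper notes this explicitly for $d\ge 2$). So any completion must exploit the $\gamma_2$-structure in a way that goes beyond VC, and you have not supplied such a mechanism.

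The paper's route is quite different and avoids VC/Haussler entirely for the main step. After a discrepancy-based sparsification (which could equally be done via your regularity argument), the key new idea is a $\gamma_2$-\emph{decrement} lemma: given a nonzero Boolean $M$ with $\gamma_2(M)\le\gamma$ and $p(M)\le 1/2$, one finds a submatrix $N$ of side $\ge n\cdot 2^{-O(\gamma)}$ with $\gamma_2(N)\le\gamma-\Omega(1/\gamma)$ and $p(N)\le 1/2$. The mechanism is an explicit manipulation of a factorization $M=UV$: one identifies a ``brilliant'' row $u_r$ (or column) satisfying $\sum_i\langle u_r,u_i\rangle^2\ge d_r$, projects all $u_i$ orthogonally to $u_r$, and deletes the columns $v_j$ with $\langle u_r,v_j\rangle\ne 0$. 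This preserves the submatrix exactly while shrinking $\|U\|_F^2$ by $\Omega(d_r/\gamma)$; repeating until roughly half the columns are gone forces $\|U\|_F^2$ to drop by $\Omega(n/\gamma)$, so a positive fraction of rows now have $\|u_i\|^2\le\gamma-\Omega(1/\gamma)$. Iterating this decrement $O(\gamma^2)$ times drives $\gamma_2$ to zero, yielding the all-zeros submatrix at total cost $2^{-O(\gamma^3)}$. The cubic exponent thus arises as $O(\gamma^2)$ iterations times $2^{-O(\gamma)}$ per step, not from a VC-dimension $\times$ log-separation product.
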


More generally, Hambardzumyan, Hatami and Hatami \cite{HHH} proposed an even stronger version of this conjecture where the condition on the matrices is relaxed to having bounded normalized trace norm. Given an $m\times n$ matrix $M$, the \emph{trace norm} of $M$, denoted by $\|M\|_{\tr}$, is the sum of its singular values. Furthermore, the \emph{normalized trace norm} of $M$ is defined as $\frac{\|M\|_{\tr}}{\sqrt{mn}}$ and it is well known that $\gamma_2(M)$ is an upper bound for this quantity. We verify this conjecture by showing that it is just a simple corollary of \cref{thm:mono_rectangle}. 

\begin{corollary}\label{cor:trace}
Let $M$ be an $m\times n$ Boolean matrix such that $\frac{\|M\|_{\tr}}{\sqrt{mn}}\leq \gamma$. Then $M$ contains an $\delta_1 m\times \delta_2 n$ all-zeros or all-ones submatrix such that $\delta_1,\delta_2\geq 2^{-O(\gamma^3)}$.
\end{corollary}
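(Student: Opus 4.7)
The plan is to deduce \cref{cor:trace} from \cref{thm:mono_rectangle} by exhibiting a submatrix of $M$ of constant density whose $\gamma_2$-norm is at most $2\gamma$. The key observation is that although the inequality $\|M\|_{\tr}/\sqrt{mn} \le \gamma_2(M)$ can be strict in general, a symmetric factorization derived from the SVD has the property that the row norms of the left factor and the column norms of the right factor are small \emph{on average}, so Markov's inequality automatically produces a large sub-rectangle on which they are uniformly small.

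Concretely, I would begin with the SVD $M = U_0 \Sigma V_0^T$, where $U_0, V_0$ have orthonormal columns and $\Sigma$ is diagonal with entries $\sigma_1,\ldots,\sigma_k$, and then take the factorization $M = UV$ with $U := U_0 \Sigma^{1/2}$ and $V := \Sigma^{1/2} V_0^T$. Since the columns of $U_0$ and $V_0$ are unit vectors, a direct computation gives
\[ \sum_{i=1}^m \|U_{i,\cdot}\|_2^2 \;=\; \sum_{j=1}^n \|V_{\cdot,j}\|_2^2 \;=\; \sum_l \sigma_l \;=\; \|M\|_{\tr} \;\le\; \gamma\sqrt{mn}. \]
Thus the rows of $U$ have average squared $\ell_2$-norm at most $\gamma\sqrt{n/m}$, while the columns of $V$ have average squared $\ell_2$-norm at most $\gamma\sqrt{m/n}$. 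By Markov's inequality, there exist $R \subseteq [m]$ with $|R| \ge m/2$ and $C \subseteq [n]$ with $|C| \ge n/2$ such that $\|U_{i,\cdot}\|_2^2 \le 2\gamma\sqrt{n/m}$ for every $i \in R$, and $\|V_{\cdot,j}\|_2^2 \le 2\gamma\sqrt{m/n}$ for every $j \in C$.

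Now $M_{R,C} = U_R V_C$ is a valid factorization of the (still Boolean) submatrix $M_{R,C}$, and the $\sqrt{n/m}$ and $\sqrt{m/n}$ factors cancel precisely:
\[ \gamma_2(M_{R,C}) \;\le\; \|U_R\|_{\row}\,\|V_C\|_{\col} \;\le\; \sqrt{2\gamma\sqrt{n/m}}\cdot\sqrt{2\gamma\sqrt{m/n}} \;=\; 2\gamma. \]
Applying \cref{thm:mono_rectangle} to $M_{R,C}$ with parameter $2\gamma$ then produces an all-zeros or all-ones submatrix of dimensions at least $2^{-O(\gamma^3)}|R|\times 2^{-O(\gamma^3)}|C|$, which by $|R|\ge m/2$ and $|C|\ge n/2$ yields the desired $2^{-O(\gamma^3)} m \times 2^{-O(\gamma^3)} n$ submatrix of $M$. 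There is no serious obstacle here: the only step requiring any real idea is the SVD-plus-Markov balancing argument above, and I have just verified it.
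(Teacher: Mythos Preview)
Your proof is correct and is essentially identical to the paper's own argument: the paper isolates exactly the same SVD-plus-Markov step as \cref{lemma:trace_to_gamma} (taking $U=U_0\Sigma^{1/2}$, $V=\Sigma^{1/2}V_0^T$, observing $\|U\|_F^2=\|V\|_F^2=\|M\|_{\tr}$, and discarding the half of rows/columns with largest norm), and then applies \cref{thm:mono_rectangle} to the resulting $\tfrac{m}{2}\times\tfrac{n}{2}$ submatrix with $\gamma_2\le 2\gamma$.
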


We highlight another immediate corollary of \cref{thm:mono_rectangle} on arbitrary integer matrices of bounded $\gamma_2$-norm.

\begin{corollary}\label{cor:integer}
    Let $M$ be an $m\times n$ integer matrix such that $\gamma_2(M)\leq \gamma$. Then $M$ contains an $\delta_1 m\times \delta_2 n$ constant submatrix such that $\delta_1,\delta_2\geq 2^{-\gamma^{O(\gamma)}}$.
\end{corollary}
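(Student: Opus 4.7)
My plan is to reduce the integer case to the Boolean case of Theorem~\ref{thm:mono_rectangle} via entrywise Lagrange interpolation, and iterate. First I would observe that, from the definition of $\gamma_2$ together with Cauchy--Schwarz, every entry satisfies $|M_{ij}|\leq\gamma_2(M)\leq\gamma$, so the integer entries of $M$ lie in $\{-\lfloor\gamma\rfloor,\dots,\lfloor\gamma\rfloor\}$ and take at most $2\lfloor\gamma\rfloor+1$ distinct values. The strategy is then to peel these values off one at a time.

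For each integer $k$ in this range, let $B_k$ denote the Boolean indicator matrix of $\{(i,j):M_{ij}=k\}$. Setting $S_k=\{-\lfloor\gamma\rfloor,\dots,\lfloor\gamma\rfloor\}\setminus\{k\}$ and letting $J$ be the all-ones matrix, entrywise Lagrange interpolation gives $B_k=\prod_{j\in S_k}(M-jJ)/(k-j)$, with all products taken entrywise. I would then combine three standard facts about $\gamma_2$: it is a norm with $\gamma_2(J)=1$; it is submultiplicative under the entrywise (Hadamard) product, via the tensor factorization $(A\circ B)_{ij}=\langle u_i\otimes u'_i,\,v_j\otimes v'_j\rangle$; and it does not increase under passing to submatrices. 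The first two give $\gamma_2(B_k)\leq\prod_{j\in S_k}(\gamma+|j|)/|k-j|$, and a Stirling estimate on $\prod_{j\in S_k}|k-j|\geq(\lfloor\gamma\rfloor!)^2$ should then yield the singly-exponential bound $\gamma_2(B_k)\leq(2e)^{2\gamma}=2^{O(\gamma)}$.

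The iteration itself is straightforward. I would pick any value $k$ taken by $M$ and apply Theorem~\ref{thm:mono_rectangle} to $B_k$: if it returns an all-ones block, that block is a constant submatrix of $M$ with value $k$ and I am done; otherwise it returns an all-zeros block of proportions $\delta:=2^{-O(\gamma_2(B_k)^3)}=2^{-2^{O(\gamma)}}$ on which $M$ avoids $k$. Restricting $M$ to that block strictly shrinks the set of values taken while preserving $\gamma_2\leq\gamma$ (monotonicity of $\gamma_2$ under submatrix restriction), so after at most $2\lfloor\gamma\rfloor+1$ iterations I must have extracted a constant submatrix. The final proportions are at least $\delta^{2\gamma+1}=2^{-(2\gamma+1)\cdot 2^{O(\gamma)}}=2^{-2^{O(\gamma)}}\geq 2^{-\gamma^{O(\gamma)}}$, matching the required bound.

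The main potential obstacle is the bound $\gamma_2(B_k)\leq 2^{O(\gamma)}$ in the second paragraph; everything else is a direct application of Theorem~\ref{thm:mono_rectangle} or a bookkeeping step. That bound in turn reduces to the two standard inequalities $\gamma_2(A\circ B)\leq\gamma_2(A)\gamma_2(B)$ and $\gamma_2(M-jJ)\leq\gamma_2(M)+|j|$, both of which should already appear earlier in the paper. In fact any bound of the form $\gamma_2(B_k)\leq 2^{O(\gamma\log\gamma)}$ would still suffice to obtain the target rate $2^{-\gamma^{O(\gamma)}}$, so the approach has some slack in the exact constants one obtains.
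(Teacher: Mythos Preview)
Your proof is correct and rests on the same core idea as the paper's: since $|M_{ij}|\leq\gamma_2(M)\leq\gamma$, one can apply entrywise polynomial interpolation together with the Hadamard submultiplicativity $\gamma_2(A\circ B)\leq\gamma_2(A)\gamma_2(B)$ to produce a Boolean matrix of bounded $\gamma_2$-norm, and then invoke the Boolean result. The organizational difference is that the paper picks the \emph{most common} value $t$, builds the indicator of $\{M\neq t\}$ via the polynomial $q(x)=1+c\prod_{k\neq t}(x-k)$ (so $p(N)\leq 1-\tfrac{1}{2\gamma+1}$), sparsifies using Lemma~\ref{lemma:sparsifying}, and then applies Theorem~\ref{thm:all_zero} once; you instead build the Lagrange indicator of $\{M=k\}$ for an arbitrary present value $k$, apply the dichotomy of Theorem~\ref{thm:mono_rectangle}, and iterate at most $2\lfloor\gamma\rfloor+1$ times to peel off values. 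Your route trades the sparsification step for a short iteration, and your Stirling estimate $\prod_{j\in S_k}|k-j|\geq(\lfloor\gamma\rfloor!)^2$ correctly yields $\gamma_2(B_k)\leq 2^{O(\gamma)}$, which in fact gives the slightly stronger conclusion $\delta_1,\delta_2\geq 2^{-2^{O(\gamma)}}$; the paper simply records the cruder $\gamma'\leq\gamma^{O(\gamma)}$ since that already matches the stated corollary.
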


Furthermore, we show that \cref{thm:mono_rectangle}  is close to optimal, that is, the factor $2^{-O(\gamma^3)}$ cannot be replaced by anything smaller than $2^{-O(\gamma)}$.


\begin{theorem}\label{prop:construction}
Let $\gamma\geq 3$ and $n$ be sufficiently large with respect to $\gamma$. Then there exists an $n\times n$ Boolean matrix $M$ such that $\gamma_2(M)\leq \gamma$, and $M$ contains no $t\times t$ all-zeros or all-ones submatrix for $t>n2^{-\gamma+3}$.
\end{theorem}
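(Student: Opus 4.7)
The plan is to use a randomized intersection-pattern construction. Let $d$ be a sufficiently large integer (say $d = n^{3/2}\gamma^2$), and choose $S_1, \ldots, S_n$ and $T_1, \ldots, T_n$ to be independent uniformly random $\gamma$-element subsets of $[d]$. Define $M_{ij} = |S_i \cap T_j|$, which a priori takes values in $\{0, 1, \ldots, \gamma\}$. I claim that, with high probability, $M$ is Boolean and satisfies both required bounds.

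First, by a union bound, the probability that there exist $i, j$ with $|S_i \cap T_j| \geq 2$ is at most $n^2 \cdot O(\gamma^4/d^2) = O(1/n) = o(1)$, so whp $M \in \{0,1\}^{n \times n}$. On this event, $M = UV$ where $U = (\mathbf{1}_{S_i})_i \in \{0, 1\}^{n \times d}$ and $V = (\mathbf{1}_{T_j})_j \in \{0, 1\}^{d \times n}$ are the incidence matrices of the two set families. Since $\|U\|_{\row} = \|V\|_{\col} = \sqrt{\gamma}$, this exhibits a factorization witnessing $\gamma_2(M) \leq \gamma$.

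The monochromatic bound follows from a first-moment calculation. The expected number of $t \times t$ all-ones submatrices is at most $\binom{n}{t}^2 (\gamma^2/d)^{t^2}$ (up to mild conditioning factors), which is $o(1)$ already for $t \geq 2$ when $d \geq n^{3/2}\gamma^2$. For the $t \times t$ all-zeros submatrices, the expected count is at most
\[
    \binom{n}{t}^2 (1 - \gamma^2/d)^{t^2} \;\leq\; \exp\bigl(2t \log(en/t) - \gamma^2 t^2/d\bigr),
\]
which is $o(1)$ once $t = \Omega(n^{3/4}(\log n)^{1/2})$. Since $n^{3/4}(\log n)^{1/2} = o(n/2^{\gamma - 3})$ for every fixed $\gamma$ and $n$ sufficiently large, no $t \times t$ monochromatic submatrix exists with $t > n/2^{\gamma - 3}$.

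The main technical obstacle is the monochromatic $0$ bound. The cross-intersection constraint $|S_i \cap T_j| \leq 1$ forces $M$ to be very sparse (density $\Theta(1/\sqrt{n})$ in the above parametrization), so the complement $J - M$ is nearly the complete bipartite all-ones matrix, and one must verify by a careful first-moment calculation that no large $0$-biclique persists. The conditioning on the event $|S_i \cap T_j| \leq 1$ introduces mild dependencies between the entries of $M$, but these can be absorbed into the union bound, so the random construction goes through cleanly for $n$ sufficiently large relative to $\gamma$.
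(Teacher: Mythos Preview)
Your high-level idea (random $\gamma$-element subsets, intersection matrix) is exactly right and matches the paper's construction. But the execution breaks down at the all-zeros step, for two independent reasons.

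\textbf{The matrix is far too sparse.} With $d=n^{3/2}\gamma^2$, the probability that a given entry is $1$ is $\approx \gamma^2/d=n^{-3/2}$, so $\mathbb{E}|M|\approx n^{1/2}$. With only $O(\sqrt n)$ one-entries in an $n\times n$ matrix, deleting the at most $O(\sqrt n)$ rows and columns that contain a one already leaves an $(n-O(\sqrt n))\times(n-O(\sqrt n))$ all-zeros block. So your matrix certainly \emph{does} contain an all-zeros submatrix of size $\gg n\cdot 2^{-\gamma+3}$; the theorem is simply false for this choice of $d$.

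\textbf{The first-moment bound is miscomputed.} Even granting your independence heuristic, plug $t=n^{3/4}\sqrt{\log n}$ and $d=n^{3/2}\gamma^2$ into your own exponent: $\gamma^2 t^2/d=\log n$, while $2t\log(en/t)\asymp n^{3/4}(\log n)^{3/2}$. The exponent is hugely positive, not negative. In fact the inequality $\gamma^2 t^2/d>2t\log(en/t)$ rearranges to $t>2n^{3/2}\log(en/t)$, which is impossible for any $t\le n$. More conceptually, the events $\{M_{ij}=0\}$ are \emph{not} close to independent across a $t\times t$ block: an all-zeros block on $I\times J$ is exactly the event $\bigl(\bigcup_{i\in I}S_i\bigr)\cap\bigl(\bigcup_{j\in J}T_j\bigr)=\emptyset$, and the probability of this is governed by the size of the union $\bigcup_I S_i$, not by $t^2$ separate coin flips.

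The paper uses the same random-subset construction but with a much smaller ground set (size $k$ with $n\asymp k^{3/2}$, so $k\asymp n^{2/3}$), and replaces the first-moment argument by a structural one: if $I\times J$ is all-zeros then $\bigcup_{i\in I}S_i$ and $\bigcup_{j\in J}T_j$ are disjoint subsets of $[k]$, so one of them lies inside some set $T$ of size $k/2$; a Chernoff bound then shows that any fixed $T$ of size $k/2$ contains at most $\approx 2^{-\gamma}\cdot n$ of the random $\gamma$-sets. This is where the factor $2^{-\gamma}$ actually comes from, and there is no way to extract it from a naive first-moment count on submatrices.
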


While the $\gamma_2$-norm of a Boolean matrix can, in general, be difficult to estimate, we show that under a certain structural assumption, it is approximately equal to a simple combinatorial parameter. To this end, we define the \emph{degeneracy} of a Boolean matrix $M$ to be the smallest integer $d$ such that every submatrix of $M$ has a row or a column with at most $d$ one entries and we say that a Boolean matrix is \emph{four cycle-free} if it contains no $2\times 2$ all-ones submatrix. In other words, if $M$ is the bi-adjacency matrix of a bipartite graph $G$, then the degeneracy of $M$ is equal to the degeneracy of $G$ and $M$ is four cycle-free if and only if $G$ has no four cycles. 
We show that if $M$ is four cycle-free, then the $\gamma_2$-norm of $M$ is essentially the square-root of its degeneracy. 

\begin{theorem}\label{thm:four_cycle}
Let $M$ be a four cycle-free Boolean matrix of degeneracy $d$. Then $$\gamma_2(M)=\Theta(\sqrt{d}).$$
\end{theorem}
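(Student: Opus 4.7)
The proof splits into upper and lower bounds. For the upper bound $\gamma_2(M) \leq 2\sqrt d$, the four cycle-free hypothesis is not needed. Since $M$ has degeneracy $d$, we inductively remove rows or columns with at most $d$ ones to obtain a linear ordering of the vertices of the bipartite graph of $M$; orient each edge toward its later endpoint in this ordering, so every vertex has out-degree at most $d$. Split $M=M^++M^-$ according to whether the corresponding edge is oriented from a row to a column (contributing to $M^+$) or the reverse (contributing to $M^-$). Then each row of $M^+$ and each column of $M^-$ has at most $d$ ones, so the trivial factorizations $M^+=M^+\cdot I$ and $M^-=I\cdot M^-$ give $\gamma_2(M^\pm)\leq\sqrt d$, and the triangle inequality for the norm $\gamma_2$ yields $\gamma_2(M)\leq 2\sqrt d$.

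For the lower bound $\gamma_2(M)=\Omega(\sqrt d)$, we use monotonicity of $\gamma_2$ under submatrices together with the normalized trace norm bound $\gamma_2(A)\geq \|A\|_{\tr}/\sqrt{pq}$ for every $p\times q$ matrix (as noted in the introduction). By the definition of degeneracy, $M$ has a sub-bipartite-graph $H$ with minimum degree at least $d$ on both sides, and this $H$ is still four cycle-free. To lower bound $\|H\|_{\tr}$, apply two rounds of Cauchy--Schwarz to the singular values of $H$ to obtain $\|H\|_{\tr}\geq \|H\|_F^3/\|HH^T\|_F$. Since $H$ is four cycle-free, every off-diagonal entry of $HH^T$ lies in $\{0,1\}$, giving the identity $\|HH^T\|_F^2 = \sum_r \deg(r)^2 + \sum_c \deg(c)^2 - |E(H)|$.

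To make this useful, we further pass to a sub-bipartite-graph $H^*$ of $H$ in which both the minimum and maximum degrees are $\Theta(d)$ on both sides. Near-regularity implies $\sum_r \deg(r)^2,\, \sum_c \deg(c)^2 \leq O(d\cdot|E(H^*)|)$, hence $\|H^*(H^*)^T\|_F \leq O(\sqrt{d\cdot|E(H^*)|})$. Combined with $\|H^*\|_F^2 = |E(H^*)|\geq d\sqrt{p^*q^*}$ (from the min degree and AM--GM), we obtain $\gamma_2(H^*) \geq \|H^*\|_{\tr}/\sqrt{p^*q^*} = \Omega(\sqrt d)$, and by monotonicity $\gamma_2(M)\geq\gamma_2(H^*) = \Omega(\sqrt d)$.

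The main technical obstacle is the passage from $H$ to the near-regular $H^*$: a straightforward dyadic bucketing of vertices by degree followed by pigeonhole gives such an $H^*$ but only with a polylogarithmic loss, while to recover the sharp $\Omega(\sqrt d)$ one exploits the Kővári--Sós--Turán bound specific to four cycle-free graphs, which forces the degree distribution to already be sufficiently balanced that only a constant-factor loss is incurred in the pruning step.
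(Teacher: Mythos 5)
Your upper bound argument is correct and is the same as the paper's: use the degeneracy ordering to split $M$ into two matrices, one with all row-sums $\le d$ and one with all column-sums $\le d$, then apply subadditivity. The Cauchy--Schwarz chain $\|H\|_{\tr}\ge \|H\|_F^3/\|HH^T\|_F$ and the identity $\|HH^T\|_F^2=\sum_r\deg(r)^2+\sum_c\deg(c)^2-|E(H)|$ for four cycle-free $H$ are both correct as well.

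The gap is exactly the step you flag as the ``main technical obstacle,'' and it is not merely a technicality: the two-sided near-regularization you want --- passing to a subgraph of average degree $\Omega(d)$ with all degrees $\Theta(d)$ on \emph{both} sides --- is provably impossible in general, and the paper says so explicitly, citing a known counterexample in graph theory. The authors' Lemma~\ref{lemma:regularize} only achieves min degree $\Omega(d)$ on both sides and max degree $O(d)$ on \emph{one} side. Your appeal to the K\H{o}v\'ari--S\'os--Tur\'an bound to ``force the degree distribution to already be sufficiently balanced'' does not establish what you need: KST caps the number of very high-degree rows, but after sub\-sampling the min-degree guarantee on the column side can degrade by more than a constant factor, and I do not see how to close this without essentially reproducing a weighted spectral argument. (A rough check shows your required bound $\sum_r\deg(r)^2 = O(d|E|)$ only follows from KST together with one-sided regularity when $|E|=O(d^3)$, which need not hold.)

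The paper sidesteps this entirely by replacing the weak inequality $\gamma_2(H)\ge\|H\|_{\tr}/\sqrt{pq}$ with the sharper dual characterization $\gamma_2(N)\ge\|N\circ(uv^T)\|_{\tr}$ for arbitrary unit vectors $u,v$ (property~(\ref{pr:gamma})). Taking $u(i)\propto\sqrt{d_i}$, i.e.\ reweighting rows by their degree, precisely compensates for the fact that only the column side was regularized; the trace norm of the resulting weighted matrix is then estimated by bucketing rows into dyadic degree classes, analyzing the Gram matrix on each bucket, and invoking Cauchy interlacing to collect singular values. Your normalized-trace-norm approach cannot make this weighting visible, which is why the two-sided regularity it needs is both essential and unavailable.
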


Due to a powerful graph theoretic result of Hunter, Milojevi\'c, Sudakov, and Tomon \cite{HMST0}, \cref{thm:mono_rectangle} and \cref{thm:four_cycle} can be combined to provide the following Zarankiewicz-type result, qualitatively generalizing both.

\begin{theorem}\label{thm:zarankiewicz}
    For every $\gamma>0$ there exists a constant $C >0$ such that the following holds. Let $M$ be an $n\times n$ Boolean matrix such that $\gamma_2(M)\leq \gamma$. If $M$ has at least $Ctn$ one-entries, then $M$ contains a $t\times t$ all-ones submatrix.
\end{theorem}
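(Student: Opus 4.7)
The plan is to combine Theorem~\ref{thm:mono_rectangle} with the hereditary Zarankiewicz-type theorem of Hunter, Milojevi\'c, Sudakov, and Tomon \cite{HMST0}. The starting point is the observation that the class
\[
\mathcal{M}_\gamma=\{M : M\text{ is Boolean and }\gamma_2(M)\leq \gamma\}
\]
is closed under taking submatrices: if $M=UV$ witnesses $\gamma_2(M)\leq \gamma$, then restricting $U$ to the rows and $V$ to the columns that index any submatrix $M'$ yields a factorization of $M'$ in which the row- and column-norms can only decrease. Viewing Boolean matrices as bi-adjacency matrices of bipartite graphs, $\mathcal{M}_\gamma$ is therefore a hereditary class of bipartite graphs.

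By Theorem~\ref{thm:mono_rectangle}, every $M\in\mathcal{M}_\gamma$ of size $m\times n$ contains a $\delta m\times \delta n$ all-ones or all-zeros block with $\delta=2^{-O(\gamma^3)}$. This is precisely the \emph{strong Erd\H os--Hajnal property} for the hereditary class $\mathcal{M}_\gamma$. The theorem of Hunter, Milojevi\'c, Sudakov, and Tomon \cite{HMST0} then converts this qualitative structural input into a quantitative Tur\'an-type conclusion: any hereditary family of bipartite graphs with the strong Erd\H os--Hajnal property has a \emph{linear} Zarankiewicz function. Applied to $\mathcal{M}_\gamma$, it delivers a constant $C=C(\gamma)$ such that every $K_{t,t}$-free matrix in $\mathcal{M}_\gamma$ has at most $Ctn$ one-entries. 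The contrapositive is exactly \cref{thm:zarankiewicz}.

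The main obstacle is to align the precise hypotheses of the HMST theorem with the output of Theorem~\ref{thm:mono_rectangle}. The monochromatic block produced by our theorem may be all-ones \emph{or} all-zeros, and we must ensure the HMST machinery either accommodates both outcomes directly, or that we can reduce to one of them. The latter is easy: since $\gamma_2$ is a norm and $\gamma_2(J)=1$, we have $\gamma_2(J-M)\leq 1+\gamma_2(M)$, so $\mathcal{M}_\gamma$ is essentially self-complementary and we may replace $\gamma$ by $\gamma+1$ if needed. As a consistency check, Theorem~\ref{thm:four_cycle} already settles the $t=2$ case optimally: a four cycle-free matrix with $\gamma_2\leq \gamma$ has degeneracy $O(\gamma^2)$ and therefore at most $O(\gamma^2 n)$ ones, matching the conclusion of \cref{thm:zarankiewicz} with $C=O(\gamma^2)$ in this special case and suggesting that $C(\gamma)$ can be taken polynomial in $\gamma$ provided the HMST bound is explicit.
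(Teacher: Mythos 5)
The central step of your argument is not available: there is no theorem, in \cite{HMST0} or elsewhere, stating that every hereditary family of bipartite graphs with the strong Erd\H{o}s--Hajnal property has a \emph{linear} Zarankiewicz function, and the statement is in fact false. Heredity plus strong Erd\H{o}s--Hajnal with constant $\delta$ gives only the weaker bound $O(n^{2-c})$ on the edge count of $K_{t,t}$-free members, for some $c=c(\delta)\in(0,1)$: a $K_{t,t}$-free matrix on $n+n$ vertices with $\delta n > t$ must have a $\delta n\times\delta n$ all-zeros block, and recursing on the three remaining corners yields a recursion of the shape $f(n)\leq 3f((1-\delta)n)$, hence $f(n)=O(n^{\log_{1/(1-\delta)}3})$, which is strictly superlinear. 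Linearity (degree-boundedness) is a genuinely stronger property. What \cite{HMST0} actually supplies, and what the paper records as \cref{lemma:dense_C4}, is a much more specific tool: if a bipartite graph of average degree $d$ has no four-cycle-free induced subgraph of average degree at least $k$, then it contains a subgraph on at most $d$ vertices with $\Omega_k(d^2)$ edges.

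Because of this, \cref{thm:four_cycle} is not a mere consistency check in the actual argument; it is a load-bearing ingredient. It provides the hypothesis of \cref{lemma:dense_C4}: a matrix with $\gamma_2\leq\gamma$ cannot contain a four-cycle-free induced submatrix of average degree $\gg\gamma^2$, since by \cref{thm:four_cycle} and monotonicity such a submatrix would force $\gamma_2(M)=\Omega(\gamma)$. \cref{lemma:dense_C4} then localizes the density: from $\Omega(dn)$ one-entries spread over $n+n$ vertices it extracts a submatrix $N$ on $O(d)$ rows and columns carrying $\Omega(d^2)$ ones, i.e.\ of \emph{constant} one-density. Only at that point does passing to $J-N$ (still of bounded $\gamma_2$, as you correctly note) and applying \cref{lemma:sparsifying} together with \cref{thm:all_zero} give an all-ones block of side $\Omega_\gamma(d)\geq t$. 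Without this density-localization step, \cref{thm:mono_rectangle} applied to the original sparse matrix will simply keep handing you all-zeros blocks, and no amount of iterating the strong Erd\H{o}s--Hajnal property on an $n\times n$ matrix with $O(dn)$ ones will produce an all-ones block of side $\Omega(d)$.
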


 The factorization norm $\gamma_2$ and, more broadly, factorization theory for Banach spaces have been a central and influential topic in functional analysis, originating with the work of Kwapie\'n~\cite{kwapien1972operators} and Maurey~\cite{maureytheoremes}, as well as being implicitly present in an earlier work by Grothendieck~\cite{grothendieck1956resume}. Moreover, the $\gamma_2$-norm has also found far-reaching applications across theoretical computer science. These include lower bounds on quantum and randomized communication protocols in communication complexity \cite{LS}, bounds on hereditary discrepancy in discrepancy theory \cite{MNT}, the algorithmic version of the celebrated Bourgain-Tzafriri theorem on the column subset selection problem \cite{tropp2009column}, a connection to margin complexity in learning theory \cite{linial2009learning}, and several applications in differential privacy \cite{muthukrishnan2012optimal, edmonds2020power, fichtenberger2023constant}. Thus, understanding the fundamental properties of the $\gamma_2$-norm is strongly motivated by applications in these areas.

\subsection{The MaxCut problem} Given a graph $G$, a \emph{cut} in $G$ is a partition of the vertices into two parts, together with all the edges having exactly one vertex in each part. The size of the cut is the number of its edges, and the \emph{MaxCut} of $G$ is the maximum size of a cut. Algorithmic and theoretical properties of the MaxCut are extensively studied \cite{AKS,BJS,Edwards,Edwards2,erdos_MaxCut,GoemansWilliamson}.

If $G$ has $m$ edges, then $G$ has a cut of size at least $m/2$, as this is the expected size of the cut resulting from a uniform random partition of the vertex set. A fundamental result of Edwards \cite{Edwards,Edwards2} states that this simple bound can be improved to $m/2+(\sqrt{8m+1}-1)/8$, which is sharp in case $G$ is a complete graph on an odd number of vertices. In general, this shows that every graph with $m$ edges has MaxCut of size at least $m/2+\Omega(\sqrt{m})$. On the other hand, all known examples of graphs with MaxCut of size $m/2+O(\sqrt{m})$ are close to complete graphs or the disjoint union of complete graphs. Motivated by this, we establish a strong structural property of graphs with MaxCut of size $m/2+O(\sqrt{m})$: they contain a complete subgraph of size $\Omega(\sqrt{m})$.

\begin{theorem}\label{thm:main_MaxCut}
    Let $\alpha>0$ and let $G$ be a graph with $m$ edges containing no cut of size larger than $m/2+\alpha\sqrt{m}$. Then $G$ contains a clique of size $2^{-O(\alpha^9)}\sqrt{m}$.
\end{theorem}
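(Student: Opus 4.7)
The plan is to apply \cref{thm:mono_rectangle} to the Boolean matrix $M = A + I$, where $A$ is the adjacency matrix of $G$ and $n = |V(G)|$. Observe that a principal all-ones submatrix of $M$ of size $t$ is exactly a $K_t$ subgraph of $G$, so it suffices to locate a large monochromatic submatrix of $M$ whose row and column sets have large intersection. The starting reformulation is standard: for $x \in \{-1, 1\}^n$ the cut size equals $m/2 - \tfrac14 x^T A x$, so the MaxCut hypothesis is equivalent to the quadratic form lower bound
\[
x^T A x \ \geq\ -4\alpha\sqrt{m} \qquad \text{for every } x \in \{-1, 1\}^n.
\]

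The heart of the proof would be to deduce from this inequality a factorization norm bound of the form $\gamma_2(M) \leq C\alpha^3$. This is where I expect the main technical difficulty to lie. The heuristic is that the quadratic form lower bound constrains the negative part of the spectrum of $A$, forcing $A + I$ to be close to positive semidefinite, and a Grothendieck-type duality---translating between the $\gamma_2$-norm and optima of bilinear forms over $\{\pm1\}^n$---should convert this into the desired functional-analytic estimate. The key ingredients would be the quadratic form bound, the identity $\mathrm{tr}(A^2) = 2m$, and Grothendieck's inequality.

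With the $\gamma_2$ bound in hand, \cref{thm:mono_rectangle} produces $R, C \subseteq V$ of size $s \geq 2^{-O(\alpha^9)} n$ with $M[R \times C]$ constant. If the block is all zeros then $R \cap C = \emptyset$ and $G$ has no edges between $R$ and $C$; I would handle this by recursing on $G[R]$ or $G[C]$, which inherit the MaxCut hypothesis with comparable constants. If the block is all ones, write $r = |R \cap C|$ and $s = |R| = |C|$. The cut separating $R \setminus C$ from the rest contains all edges $R \setminus C \to R \cap C$ and $R \setminus C \to C \setminus R$, both of which are guaranteed by the all-ones structure; a short count gives a cut of size at least $s(s - r)$, so the MaxCut hypothesis yields
\[
r \ \geq\ s - \frac{m}{2s} - \frac{\alpha\sqrt{m}}{s}.
\]
When $s \gtrsim \sqrt{m}$ this already produces a clique $R \cap C$ of size $\Omega(\sqrt m)$, well within the required bound.

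The remaining case is when $n = O(2^{O(\alpha^9)} \sqrt{m})$, so $\delta n$ may fall below $\sqrt{m}$; here $G$ is very dense. In this regime I would instead invoke \cref{thm:zarankiewicz} applied to $M$: since $M$ has at least $2m$ one-entries and $\gamma_2(M) = O(\alpha^3)$, the theorem yields a $t \times t$ all-ones block with $t = \Omega(\sqrt{m} \cdot 2^{-O(\alpha^9)})$. A direct density argument in this nearly-complete regime, combined with the MaxCut hypothesis (which forces $G$ to be close to a disjoint union of cliques, as one sees by comparing $\sum n_i^2$ with the Edwards-type excess), then extracts a clique of the desired size. The main obstacle throughout is the $\gamma_2$ estimate on $A + I$; the rest reduces to a case analysis and a cut-counting inequality.
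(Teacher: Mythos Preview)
Your proposed route has a genuine gap at its central step: the claimed bound $\gamma_2(A+I)\le C\alpha^3$ is false, and the ``Grothendieck-type duality'' you invoke points in the wrong direction.

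On the mechanism: Grothendieck's inequality relates the cut norm $\max_{x,y\in\{\pm1\}^n} x^TNy$ to the \emph{dual} norm $\gamma_2^*(N)$, not to $\gamma_2(N)$. A bound of the form $x^TAx\ge -4\alpha\sqrt{m}$ for all $x\in\{\pm1\}^n$ controls a cut-norm-type quantity and hence at best $\gamma_2^*$ of a related matrix; there is no inequality converting this into an upper bound on $\gamma_2(A+I)$. Nor does the hypothesis force $A+I$ to be close to positive semidefinite: a $\{\pm1\}$-quadratic-form lower bound does not control the smallest eigenvalue.

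A concrete counterexample: let $G$ be the disjoint union of $K_k$ and a random graph $H$ on $\sqrt{k}$ vertices with edge probability $1/2$. Then $m\sim k^2/2$ and $\surp(G)=\surp(K_k)+\surp(H)=\Theta(k)+\Theta(k^{3/4})=\Theta(k)$, so $\alpha=\Theta(1)$. But $A_H$ is a principal submatrix of $A_G$, and for random $H$ one has $\gamma_2(A_H)\ge \|A_H\|_{\tr}/\sqrt{k}=\Theta(k^{1/4})$; hence $\gamma_2(A_G+I)\ge\gamma_2(A_H)-1\to\infty$. So no bound $\gamma_2(A+I)=O(\mathrm{poly}(\alpha))$ can hold.

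What the paper does instead is prove, via a Goemans--Williamson random-hyperplane rounding, that $\surp(G)=\Omega(\mathcal{E}(G))$ where $\mathcal{E}(G)=\|A\|_{\tr}$. Combined with $n\le 6\alpha\sqrt{m}$ (from $\surp(G)\ge n/6$), this gives $\|A\|_{\tr}=O(\alpha n)$. Bounded normalized trace norm does not give bounded $\gamma_2$ globally---exactly the obstruction above---but it lets one pass to a $(1-\varepsilon)n\times(1-\varepsilon)n$ submatrix with $\gamma_2=O(\alpha/\varepsilon)$; choosing $\varepsilon\sim 1/\alpha^2$ preserves enough density and yields $\gamma_2=O(\alpha^3)$ on the submatrix. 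One then applies \cref{thm:all_zero} to its complement to obtain a $t\times t$ all-ones block with $t=2^{-O(\alpha^9)}\sqrt{m}$, and a final cut-counting plus Tur\'an argument upgrades the resulting complete bipartite subgraph to a clique. The surplus-versus-energy lemma is the essential missing idea in your outline.
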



This result connects to a line of research initiated by Erd\H{o}s and Lov\'asz in the 1970s (see \cite{erdos_MaxCut}), who studied MaxCut under forbidden subgraph conditions. Specifically, they considered the maximum cut size in graphs that avoid a fixed subgraph $H$, a problem which received substantial interest \cite{Alon_MaxCut,AKS,BJS,GJS}. Alon, Krivelevich, and Sudakov \cite{AKS} showed that for every graph $H$ there exists a constant $\varepsilon_H>0$ such that every $H$-free graph with $m$ edges has a cut of size at least $m/2+\Omega_H(m^{1/2+\varepsilon_H})$. They further conjectured that a stronger bound $m/2+\Omega_H(m^{3/4+\varepsilon_H'})$ holds for some appropriate $\varepsilon_H'>0$. In order to prove this, it is clearly enough to consider the case when $H$ is a complete graph. A closely related result of R\"aty, Sudakov, and Tomon \cite{RST} shows that regular graphs with edge density between $1/2+\varepsilon$ and $1-\varepsilon$ have a cut of size $m/2+\Omega_{\varepsilon}(m^{5/8})$.

Our \cref{thm:main_MaxCut} addresses the extreme case of the Alon, Krivelevich, Sudakov conjecture, where $H$ is a clique whose size is comparable to the host graph, specifically $H=K_{\Omega(\sqrt{m})}$.

At first glance, \cref{thm:main_MaxCut} may appear unrelated to factorization norms.  However, somewhat surprisingly, our proof of this theorem relies on \cref{thm:mono_rectangle} and \cref{cor:trace}. Specifically, we show that if $A$ is the adjacency matrix of $G$, then $G$ has a cut of size $m/2+\Omega(\|A\|_{\tr})$, improving a recent result of \cite{RatyTomon}. From this, we get an upper bound on the trace norm of $A$, enabling a direct application of \cref{cor:trace}. The proof of \cref{thm:main_MaxCut} is presented in \cref{sect:maxcut}.

\medskip
 \textbf{Organization.} In \cref{sect:applications}, we present applications and motivations of our results in communication complexity, operator theory, combinatorics and discrepancy theory. We prove \cref{thm:mono_rectangle}, \cref{cor:trace} and \cref{cor:integer} in \cref{sect:mono_rectangle} after some preliminary results in \cref{sect:sparsifying}. Then, we prove \cref{prop:construction} in \cref{sect:construction}, \cref{thm:four_cycle} and \cref{thm:zarankiewicz} in \cref{sect:four_cycle}, and \cref{thm:main_MaxCut} in \cref{sect:maxcut}.

\section{Applications}\label{sect:applications}

\subsection{Communication complexity}\label{sec:cc}
The $\gamma_2$-norm was introduced into communication complexity by the seminal paper of Linial and Shraibman \cite{LS}. The $\gamma_2$-norm of the matrix $M$ and its approximate version lower bound the following basic and well-studied communication models: \\ deterministic $\D(M)$, deterministic with oracle access to the Equality function $\D^{\EQ}(M)$, public-coin  randomized communication $\R(M)$, and quantum communication with shared entanglement $\Q^{*}(M)$. More precisely, for a Boolean matrix $M$, we have 
\[\log \gamma_2(M) \leq \D^{\EQ}(M) \leq \D(M),\]
where the first inequality is proven in \cite{HHH} and the weaker inequality of $\log \gamma_2(M) \leq \D(M)$ was initially proven in~\cite{LS}.

For the $n \times n$ matrix $M$, let the \emph{approximate} $\gamma_2$\emph{-norm} of $M$, denoted by $\tilde{\gamma}_2(M)$, be the minimum $\gamma_2$-norm of an $n \times n$ matrix $M'$ that satisfies $|M(i,j)-M'(i,j)|\leq \frac{1}{3}$ for every entry $(i,j)\in [n]\times [n]$. Then, the following inequality is proved in \cite{LS},
\begin{equation}\label{eq:gamma-random}
    \log \tilde{\gamma}_2(M) \lesssim \Q^{*}(M)\leq \R(M) \leq O(\tilde{\gamma}_2(M))^2.
\end{equation}

\subsubsection{Structure and randomized communication.} All-ones and all-zeros submatrices of a Boolean matrix play a central role in communication complexity, since they serve as the fundamental building blocks of communication protocols. Finding such submatrices of large size often leads to efficient protocol design via recursion techniques (e.g., \cite{nisan1995rank}), while their absence can lead to strong lower bounds against communication protocols in various models.  For instance, a classical result states that if the deterministic communication complexity of a Boolean matrix is low, then the matrix can be partitioned into large all-ones and all-zeros submatrices. This structural insight has been instrumental in obtaining strong lower bounds in the deterministic model.

In contrast, no such decomposition holds in general for randomized communication complexity -- this is evident via the simple example of the identity matrix. Nevertheless, one may still hope to find large structured submatrices in a Boolean matrix with bounded randomized complexity. This intuition is formalized in the following conjecture, stated in its most basic setting:
\begin{conjecture}[\cite{CLV,HHH}]\label{conj:randomized}
Every Boolean matrix with randomized communication
complexity bounded by a constant contains a linear-sized all-zeros or all-ones submatrix.
\end{conjecture}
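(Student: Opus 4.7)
The plan is to reduce \cref{conj:randomized} to a margin version of our main theorem. The starting point is the inequality $\log\tilde{\gamma}_2(M)\lesssim \R(M)$ recorded in \eqref{eq:gamma-random}: if $\R(M)\le c$, then $\tilde{\gamma}_2(M)\le 2^{O(c)}=:\gamma$, so by the definition of $\tilde{\gamma}_2$ there exists a real matrix $M'$ with $\gamma_2(M')\le \gamma$ and $\|M-M'\|_\infty\le 1/3$. Consider the shifted matrix $N:=2M'-J$, where $J$ is the all-ones matrix. Subadditivity of $\gamma_2$ together with $\gamma_2(J)=1$ gives $\gamma_2(N)\le 2\gamma+1$, while $N(i,j)\ge 1/3$ on the $1$-entries of $M$ and $N(i,j)\le -1/3$ on the $0$-entries of $M$. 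Hence finding a linear-sized all-ones or all-zeros submatrix of $M$ is equivalent to finding a linear-sized constant-sign submatrix of $N$.

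The task therefore reduces to proving the following margin generalization of \cref{thm:mono_rectangle}: \emph{every real $m\times n$ matrix $N$ with $\gamma_2(N)\le\gamma$ and $|N(i,j)|\ge\mu>0$ contains a $\delta_1 m\times\delta_2 n$ constant-sign submatrix, with $\delta_1,\delta_2>0$ depending only on $\gamma$ and $\mu$.} Once this is established, \cref{conj:randomized} follows, with the size of the monochromatic submatrix depending only on the constant bound on $\R(M)$.

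To attack this margin statement, I would revisit the proof of \cref{thm:mono_rectangle}, isolating each use of the Boolean hypothesis and replacing it by an argument that uses only the margin $|N(i,j)|\ge\mu$ together with the uniform bound $|N(i,j)|\le\gamma_2(N)\le\gamma$. Statements of the form ``a submatrix contains many $1$-entries'' should translate into ``a submatrix contains many positive entries'', with quantitative conclusions that degrade by factors depending only on $\mu$ and $\gamma$. The sparsification preliminaries of \cref{sect:sparsifying} and the rectangle-extraction argument of \cref{sect:mono_rectangle} are the natural places to revisit in this light.

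The main obstacle will be any step that genuinely exploits the discreteness of Boolean entries rather than merely their sign pattern --- for example, a counting or rounding identity that relies on $M(i,j)\in\{0,1\}$ exactly. Such steps would require a robust substitute valid for continuous matrices with a margin, most likely at the cost of worse dependence on $\gamma$ and a new dependence on $\mu$. Identifying which parts of the original proof are fundamentally discrete and which are robust to perturbation is the first concrete task; overcoming the former is where genuinely new ideas may be needed to settle \cref{conj:randomized} in full.
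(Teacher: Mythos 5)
Your proposal correctly describes the known reduction, but it does not constitute a proof of \cref{conj:randomized}, and indeed the paper does not claim to prove it either: the statement is stated as a conjecture precisely because this reduction terminates in an open problem. The chain $\R(M)=O(1)\Rightarrow\tilde\gamma_2(M)=O(1)$ and the shift $N=2M'-J$ are fine, and the resulting task --- finding a linear-sized constant-sign submatrix in a real matrix with bounded $\gamma_2$-norm and entries bounded away from zero --- is exactly the ``approximate $\gamma_2$'' version of \cref{thm:mono_rectangle} that the paper explicitly flags as open. What the paper actually proves (\cref{thm:mono_rectangle}, \cref{cor:trace}) is the special case where $N$ is itself Boolean (equivalently $\mu=1$, i.e.\ the exact $\gamma_2$-norm). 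This removes a prerequisite obstacle but leaves the conjecture unresolved, so no amount of bookkeeping around the reduction finishes the job.

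The place where your plan genuinely stalls is not in the sparsification preliminaries (those go through for arbitrary Boolean sign patterns once you fix a threshold) but in \cref{lemma:brilliant} and \cref{lemma:main}. The ``brilliant row'' argument hinges on the identity $\sum_{i,j}\langle u_i,v_j\rangle^2=\sum_{i,j}M_{i,j}^2=|M|$, which uses $M_{i,j}\in\{0,1\}$ so that $M_{i,j}^2=M_{i,j}$; with a margin matrix one only knows $\mu^2\le N_{i,j}^2\le\gamma^2$ on the support, and the two sides of inequality~\eqref{eq:brilliant} are no longer tied to the same quantity $d_r$. More seriously, the potential function $\|U_i\|_F^2$ drops by $t_i/\gamma$ per step only because a brilliant row guarantees $\sum_i\langle u_r,u_i\rangle^2\ge d_r$, and this again uses the exact Boolean normalization; a margin-only version of this inequality would degrade, and it is not clear the resulting decrement is enough to make the iteration terminate after $O(\gamma^2)$ steps. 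Your proposal ends by acknowledging that ``genuinely new ideas may be needed,'' which is accurate --- and that acknowledgment is the honest content here: what you have written is a restatement of the reduction and the obstacle, not a proof.
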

In view of the lower bound in~(\ref{eq:gamma-random}), one natural strategy to settle the conjecture would be to show that if the matrix has bounded approximate $\gamma_2$-norm, then it has a linear-sized all-zeros or all-ones submatrix. However, until now, this question has remained open even for the exact $\gamma_2$-norm itself. As such, this problem has stood as a major barrier to progress on  \cref{conj:randomized}. Motivated by this obstacle, Hambardzumyan, Hatami, and Hatami \cite[Conjecture \RomanNumeralCaps{2}]{HHH} conjectured that such structure exists not only for matrices with bounded $\gamma_2$-norm but even for the ones that satisfy the relaxed condition of having a bounded normalized trace norm. This is indeed a relaxation, as 
$\frac{\|M\|_{\tr}}{\sqrt{mn}} \leq \gamma_2(M)$. 
In \cite{HHH}, this conjecture regarding the normalized trace norm was proven for a special class of matrices known as group lifts, for which it can be deduced from Cohen's idempotent theorem. 

In this work, we resolve the conjecture of \cite{HHH} regarding matrices with bounded normalized trace norm in full generality. Our proof techniques are purely combinatorial and linear algebraic. Specifically,~\cref{cor:trace} shows that every Boolean matrix with bounded normalized trace norm contains a linear-sized all-zeros or all-ones submatrix.
This result eliminates a major bottleneck toward  \cref{conj:randomized} and offers strong evidence in its favor. 


\subsubsection{Separation between the $\gamma_2$-norm and randomized communication.} Linial and Shraibman \cite{LS} proposed the problem of whether $\tilde{\gamma}_2(M)$ in~\eqref{eq:gamma-random} can be replaced with $\gamma_2(M)$ to get a stronger lower bound for $\R(M)$. However, this was recently disproved by  Cheung,  Hatami, Hosseini, and Shirley \cite{CHHS} in a strong sense, who constructed an $n\times n$  Boolean matrix $M$ such that $\gamma_2(M)\geq \Omega(n^{1/32})$ and $\R(M)=O(\log \log n)$. Their main technical result is as follows. 

Let $1\leq q\leq p$ be integers, and let $P=P(q,p)$ be the $qp\times qp$ Boolean matrix, whose rows and columns are indexed by the elements of $[q]\times \{0,\dots,p-1\}$, and its entries are given by $P[(x,x'),(y,y')]=1$ iff $xy+x'=y'$. Furthermore, let $P_p=P_p(q,p)$ be the matrix defined almost identically, but $P[(x,x'),(y,y')]=1$ iff $xy+x'=y'$ holds modulo $p$. In \cite{CHHS}, it is proved, by technical applications of Fourier analysis, that $\gamma_2(P_p)=\Omega(q^{1/8})$ if $q\leq \sqrt{p}$, and $\gamma_2(P)=\Omega(q^{1/8})$ if $q\leq p^{1/3}$.

However, note that $P$ and $P_p$ are the incidence matrices of points and lines, so they are four cycle-free. Therefore, \cref{thm:four_cycle} immediately implies the following improvements.

\begin{theorem}
Let $1\leq q\leq p-1$. Then  $\gamma_2(P_p)=\Theta(\sqrt{q})$ and $\gamma_2(P)=\Theta(\min\{\sqrt{q},p^{1/4}\})$.
\end{theorem}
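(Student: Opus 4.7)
The plan is to derive both estimates from \cref{thm:four_cycle}, which states that $\gamma_2(M)=\Theta(\sqrt{d})$ for any four cycle-free Boolean matrix $M$ of degeneracy $d$. So it suffices to verify that $P$ and $P_p$ are four cycle-free and to compute their degeneracies. Four cycle-freeness is immediate: if distinct rows $(x_1,x_1')$ and $(x_2,x_2')$ both contain $1$'s in two distinct columns $(y_1,y_1')$ and $(y_2,y_2')$, subtracting the relevant pairs of defining equations yields $(x_1-x_2)(y_1-y_2)=0$ (in $\mathbb{Z}$ for $P$, in $\mathbb{Z}_p$ for $P_p$). Since $x_i,y_j\in[q]\subseteq\{0,\dots,p-1\}$ and $q\leq p-1$, one obtains $x_1=x_2$ or $y_1=y_2$ as integers; either equality, inserted back in the original equation, forces the two rows or the two columns to coincide, contradicting distinctness.

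For $P_p$, modular arithmetic makes the defining equation uniquely solvable in each variable, so $P_p$ is $q$-regular with degeneracy exactly $q$; this already gives $\gamma_2(P_p)=\Theta(\sqrt{q})$. For $P$, I would show that its degeneracy equals $\Theta(\min(q,\sqrt{p}))$. The upper bound $\leq q$ is clear since every row has at most $q$ ones. The $O(\sqrt{p})$ cap follows from a case split on any submatrix $H$ of $P$: either $H$ contains a row $(x,x')$ with slope $x\geq\sqrt{p}$, which already in $P$ has at most $\lfloor(p-1)/x\rfloor+1\leq\sqrt{p}+1$ ones; or every row of $H$ has slope $x<\sqrt{p}$, in which case each column $(y,y')$ of $H$ has degree at most $\sqrt{p}$, because fixing $x$ determines $x'=y'-xy$ uniquely and there are fewer than $\sqrt{p}$ available values of $x$. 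For the matching lower bound, when $q\leq c\sqrt{p}$ for a small absolute constant $c$, restricting $P$ to rows and columns whose second coordinates lie in a central window $[Cq^2,p-Cq^2]$ yields a $q$-regular submatrix, so the degeneracy is at least $q$. When $q>c\sqrt{p}$, the inclusion $P(q_0,p)\subseteq P(q,p)$ for $q_0\leq q$, combined with the monotonicity of $\gamma_2$ under submatrices, reduces the problem to the already-treated case $q_0\asymp\sqrt{p}$, yielding $\gamma_2(P)\geq\Omega(p^{1/4})$.

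Putting these degeneracy estimates into \cref{thm:four_cycle} produces $\gamma_2(P)=\Theta(\sqrt{\min(q,\sqrt{p})})=\Theta(\min(\sqrt{q},p^{1/4}))$, as claimed. The main nuance of the proof is the two-case argument capping the degeneracy of $P$ at $O(\sqrt{p})$ and the verification that the central-strip submatrix is $q$-regular for the lower bound; everything else is a routine consequence of the point-line incidence interpretation of $P$ and $P_p$ together with \cref{thm:four_cycle}.
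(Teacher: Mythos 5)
Your approach is the same as the paper's in its core structure: identify $P$ and $P_p$ as four-cycle-free, compute (or bound) their degeneracies, and invoke \cref{thm:four_cycle}. You go further than the paper in two ways. First, you spell out the four-cycle-freeness verification (the paper just asserts it from the point-line interpretation; your subtraction argument $(x_1-x_2)(y_1-y_2)=0$ with the range restriction $q\leq p-1$ is correct for both $\mathbb{Z}$ and $\mathbb{Z}_p$). Second, you actually prove the upper bound $\gamma_2(P)=O(\min\{\sqrt q,p^{1/4}\})$ via the clean two-case argument bounding the degeneracy of any submatrix of $P$ by $O(\sqrt p)$, whereas the paper explicitly leaves the upper bound as an exercise.

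One step in your lower-bound argument for $P$ does not quite work as stated: restricting rows and columns to second coordinates in a central window $[Cq^2,\, p-Cq^2]$ does not produce an exactly $q$-regular submatrix. With both sides restricted, a row $(x,x')$ near the top of the $x'$-window may have some of its $q$ incidences $y' = x' + xy$ fall outside the $y'$-window, and symmetrically a column $(y,y')$ near the bottom of the $y'$-window may miss some rows; one cannot simultaneously saturate both constraints without the windows sliding relative to each other by $\Theta(q^2)$. The fix is easy and is what the paper does: rather than insist on regularity, just count one-entries. For $q\leq c\sqrt p$, every pair $(x,y)\in[q]^2$ admits $\Omega(p)$ valid choices of $x'$ (since $xy\leq q^2\leq c^2 p$ leaves most of $\{0,\dots,p-1\}$ available for $x'=y'-xy$), so $P$ has $\Omega(q^2 p)$ ones among $qp$ rows and $qp$ columns, giving average degree $\Omega(q)$ and hence degeneracy $\Omega(q)$. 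Your final reduction for the regime $q>c\sqrt p$ via monotonicity of $\gamma_2$ under submatrices is the same as the paper's and is correct.
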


\begin{proof}
 Given $x,x',y$, there is a unique $y'$ such that $xy+x'=y' \pmod{p}$, and also given $x,y,y'$, there is a a unique $x'$ such that $xy+x'=y' \pmod{p}$. Therefore, each row and column of $P_p$ contains $q$ one entries, so the degeneracy of $P_p$ is also $q$. By \cref{thm:four_cycle}, we get $\gamma_2(P_p)=\Theta(\sqrt{q})$.

Now let us consider $P$, and let us only prove the lower bound, we leave the upper bound as an exercise. We may assume that $q\leq \sqrt{p}$, as otherwise $P(\sqrt{p},p)$ is a submatrix of $P(q,p)$ and we use that the $\gamma_2$-norm of a submatrix is always at most the $\gamma_2$-norm of the matrix. Given $x\in [q]$, there are at least $qp/4$ solutions of $xy+x'=y'$ with $x,y\in [q]$ and $x',y'\in \{0,\dots,p-1\}$. Therefore, the number of one entries of $P$ is at least $q^2/4$, which means that the degeneracy of $P$ is at least $q/4$. Hence, by \cref{thm:four_cycle}, $\gamma_2(P_p)=\Omega(\sqrt{q})$. 
\end{proof}

Very recently, a similar result was obtained by Cheung, Hatami, Hosseini, Nikolov, Pitassi, and Shirley \cite{CHHNPS}, based on similar ideas. One of their main technical lemmas shows that if $M$ is a four-cycle free Boolean matrix, then $\gamma_2(M)\geq ||M||_F^2/\sqrt{2\Delta}$, where $\Delta$ is the maximum degree of the associated bipartite graph. This gives the same bound as \cref{thm:four_cycle} in case the bipartite graph is close to regular and otherwise, \cref{thm:four_cycle} is stronger. 

\subsubsection{The Log-rank conjecture} The celebrated Log-rank conjecture of Lov\'asz and Saks \cite{logrank} proposes that the deterministic communication complexity of a rank $r$ matrix is bounded by $(\log r)^{O(1)}$. This conjecture is equivalent to the statement that every $m\times n$ Boolean matrix $M$ of rank $r$ contains an all-ones or all-zeros submatrix of size $m2^{-(\log r)^{O(1)}}\times n2^{-(\log r)^{O(1)}}$. This conjecture is still wide open, where the best known lower bound on the size of the all-ones or all-zeros submatrix is $m2^{-O(\sqrt{r})}\times n2^{-O(\sqrt{r})}$, due to a recent result of Sudakov and Tomon \cite{ST24}, slightly improving an earlier result of Lovett \cite{L16}. 

The Log-rank conjecture is closely related to \cref{thm:mono_rectangle}. Indeed, as observed by Linial and Shraibman \cite{LS}, if $M$ is a Boolean matrix of rank $r$, then $\gamma_2(M)\leq \sqrt{r}$. Therefore, our main result in   \cref{thm:mono_rectangle} yields a submatrix comparable to the best known bounds on the Log-rank conjecture, but it is also applicable to a much larger class of matrices. On the other hand, \cref{prop:construction} shows that one cannot hope to settle the Log-rank conjecture -- or even improve the current best bound -- by obtaining a larger than $m2^{-O(\sqrt{r})}\times n2^{-O(\sqrt{r})}$ all-ones or all-zeros submatrix solely by tightening the dependence on $\gamma_2$ in \cref{thm:mono_rectangle}.

 \subsection{Operator theory and harmonic analysis} In operator theory, the question of characterizing the idempotent Schur multiplies in terms of contractive idempotents is widely open. In \cite{HHH}, using the fact that the operator norm induced by a Schur multiplier 
$M$ coincides with its $\gamma_2$-norm, it was shown that this question is equivalent to the structural characterization of matrices with bounded $\gamma_2$-norm. In particular, it is equivalent to the following conjecture. A \emph{blocky-matrix} is a blow-up of a permutation matrix.
 \begin{conjecture}[\cite{HHH}]\label{conj:blocky}
     Let $M$ be an $m\times n$ Boolean matrix such that $\gamma_2(M)\leq \gamma$. Then there exists  $c_{\gamma}$, depending only on $\gamma$, such that $M$ is a $\pm 1$-linear combination of at most $c_{\gamma}$ blocky-matrices.
 \end{conjecture}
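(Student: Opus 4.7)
The plan is to attack \cref{conj:blocky} by iteratively peeling off rank-$1$ blocky matrices, using \cref{thm:mono_rectangle} as the engine. First I would reduce to peeling all-ones submatrices: after possibly subtracting the all-ones matrix $J$ (a rank-$1$ blocky matrix with $\gamma_2=1$), I may assume the monochromatic submatrix produced by \cref{thm:mono_rectangle} is all-ones, supported on some $R\times C$ with $|R|\geq \delta m$, $|C|\geq \delta n$, and $\delta\geq 2^{-O(\gamma^3)}$. Setting $B_1 := \mathbf{1}_{R\times C}$, the difference $M_1 := M - B_1$ is again Boolean (since $B_1\leq M$ entrywise), vanishing on $R\times C$ and equal to $M$ elsewhere, and by the triangle inequality $\gamma_2(M_1)\leq \gamma+1$. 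Iterating, after $k$ steps I obtain a Boolean $M_k$ with $\gamma_2(M_k)\leq \gamma+k$ and a decomposition of $M$ as a signed sum of $k$ blocky matrices plus $M_k$.

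The hard part, and the genuine obstacle, will be to argue that this iteration terminates after at most $c_\gamma$ steps. Applied naively, \cref{thm:mono_rectangle} to $M_k$ produces a monochromatic rectangle of relative size only $2^{-O((\gamma+k)^3)}$, so the proportion of $1$'s removed per step collapses super-exponentially and the iteration need not terminate in a function of $\gamma$ alone. To circumvent this, my plan is to strengthen \cref{thm:mono_rectangle} to a \emph{simultaneous} statement: that a bounded-$\gamma_2$ Boolean matrix admits a packing of a constant fraction of its $1$-entries into $f(\gamma)$ pairwise row- and column-disjoint large all-ones rectangles, whose union is itself a blocky matrix $B$, and such that the residual $M-B$ still satisfies $\gamma_2(M-B)\leq \gamma$. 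This regularity-type decomposition would peel off a full blocky matrix per step without inflating the norm, so that after $O_\gamma(\log(mn))$ steps the residual vanishes.

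The main technical difficulty is producing the disjoint packing while preserving the norm. A natural attempt is to iterate \cref{thm:mono_rectangle} \emph{inside} a single step, extracting disjoint rectangles one-by-one and then bounding the $\gamma_2$-norm of the cumulative residual by a stability version of the main theorem. Two auxiliary tools from the paper are available: \cref{thm:zarankiewicz} handles sparse residuals by showing that any matrix of bounded $\gamma_2$-norm with many $1$-entries already contains a large all-ones rectangle, while \cref{thm:four_cycle} rules out dense four-cycle-free residuals. If one can combine these with a compactness or entropy argument directly on the factorizations $M=UV$ realizing $\gamma_2(M)\leq \gamma$, a tower-type bound $c_\gamma = 2^{\mathrm{poly}(\gamma)}$ seems within reach; a polynomial bound, which would match the construction in \cref{prop:construction}, would likely require a genuinely new idea beyond iterated peeling.
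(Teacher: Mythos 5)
This statement is labelled as a \emph{conjecture} in the paper, attributed to Hambardzumyan, Hatami, and Hatami \cite{HHH}; the paper does not prove it, and it remains open. The strongest partial progress the paper cites is the recent work of Goh and Hatami \cite{GH3}, who obtain a decomposition into $2^{O(\gamma^7)}(\log\min\{m,n\})^2$ blocky matrices --- a dimension-dependent bound, not the conjectured constant $c_\gamma$. Your sketch correctly locates the obstruction and is candid that it does not resolve it, but as a proof it has genuine gaps. First, each peeling step can raise the $\gamma_2$-norm by $1$, so \cref{thm:mono_rectangle} applied to $M_k$ yields a rectangle of relative size only $2^{-O((\gamma+k)^3)}$, and no bound on the number of steps that depends on $\gamma$ alone follows. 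Second, even your proposed \emph{simultaneous} strengthening --- peeling off in each round a single blocky matrix that covers a positive fraction of the one-entries without inflating $\gamma_2$ --- would, by your own accounting, terminate only after $O_\gamma(\log(mn))$ steps; that is precisely the barrier at which \cite{GH3} stops, and a $\log(mn)$-step iteration does not yield a constant $c_\gamma$.

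Beyond the termination issue, the strengthening you invoke packs two separate unproved demands into one: that the peeled blocky matrix $B$ sits entrywise below $M$ (so that $M-B$ remains Boolean), and that $\gamma_2(M-B)\leq\gamma$. The first is close to what Goh and Hatami \cite{GH2} prove --- they produce a blocky $B$ contained in $M$ with $|B|\geq |M|/2^{2^{O(\gamma)}}$ --- but the second has no supporting mechanism: subadditivity only gives $\gamma_2(M-B)\leq\gamma_2(M)+\gamma_2(B)$, and nothing forces the norm not to increase under subtraction. In short, your plan is a reasonable roadmap toward the kind of partial result already in the literature, but it neither proves the conjecture nor supplies the new idea that would be needed to close the gap between a $\log(mn)$-type bound and a genuinely constant one.
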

  It is not difficult to see that~\cref{conj:blocky}, if true, implies~\cref{thm:mono_rectangle} (see \cite[Lemma 3.5]{HHH}). On the other hand, in an upcoming work,  Goh and Hatami \cite{GH2} builds on ~\cref{thm:mono_rectangle} to prove the following substantial evidence towards this conjecture. They show that if $M$ is a Boolean matrix with $|M|$ one-entries such that $\gamma_2(M)\leq \gamma$, then there is a blocky-matrix $B$ such that  $|B|\geq |M|/2^{2^{O(\gamma)}}$, and $B$ is ``contained'' in $M$, i.e. $M_{i,j}\geq B_{i,j}$ for every entry $(i,j)$. In another, even more recent work, Goh and Hatami \cite{GH3} prove that \cref{conj:blocky} holds up to a polylogarithmic error term. More precisely, $M$ can be written as a signed sum of at most $2^{O(\gamma^7)}(\log\min\{m,n\})^2$ blocky-matrices.

 From the perspective of harmonic analysis, ~\cref{conj:blocky} is the analogue of Cohen's celebrated idempotent theorem for the algebra of Schur multipliers. Specifically, Cohen's theorem -- made quantitative for finite Abelian groups by Green-Sanders \cite{green2008boolean, green2008quantitative} -- states that if $f : G \to \{0,1\}$ is a function on a finite Abelian group $G$ with bounded Fourier $\ell_1$-norm, $\|\hat{f}\|_1 \leq \ell$, then $f$ can be written as the $\pm 1$-linear combination of $c_{\ell}$ many indicator functions of cosets of $G$, that is $f = \sum_{i}^{c_{\ell}} \pm 1_{H_{i} + a_i}$. Equivalently, every idempotent of the Fourier algebra of $G$ can be
expressed as a linear combination of  many contractive idempotents $c_{\ell}$ -- precisely the type of structural result one seeks in the algebra of Schur multipliers. For more details on this connection, see \cite{HHH}.

  Another connection to Cohen's idempotent theorem is for a special class of matrices known as group lifts, that is, matrices $M$ that can be written as $M(x,y) = f(y - x)$ for some function $f : G \to \mathbb{R}$ over a finite group $G$. Under such lift, we know that $\|\hat{f}\|_1 = \gamma_2(M)$, and an indicator function of a coset corresponds to a blow-up of an identity matrix. Hence,~\cref{conj:blocky} holds for group lifts as a direct consequence of Cohen's theorem.

\subsection{Spectral graph theory} Graphs, whose adjacency matrices have bounded $\gamma_2$-norm naturally appear in spectral graph theory, and in the study of equiangular lines.

\subsubsection{Graphs of bounded smallest eigenvalue.} A central topic of spectral graph theory is to understand the structure of graphs having smallest eigenvalue at least $-\lambda$, see e.g.\ Koolen, Cao and Yang \cite{KCQ} for a survey. It is easy to show that if a $G$ is a nonempty graph, then the smallest eigenvalue of its adjacency matrix is at most $-1$, with equality if and only if $G$ is the disjoint union of cliques. A celebrated theorem of Cameron, Goethels, Seidel, and Shult \cite{CGSS} from 1972 settles the case $\lambda = 2$ and establishes a connection to root systems, and more recently, Koolen, Yang and Yang \cite{KYY} obtained a partial characterization in the case $\lambda = 3$. 

It turns out that if a graph has smallest eigenvalue at least $-\lambda$, then the $\gamma_2$-norm of its adjacency matrix is at most $2\lambda$, see  \cref{lemma:smallest_eigenvalue}. Hence, our results immediately apply to such graphs. Moreover, we establish the following strengthening of  \cref{thm:zarankiewicz}, which shows that graphs of bounded smallest eigenvalue contain cliques of size comparable to their average degree. This result also follows from a structure theorem of Kim, Koolen and Yang \cite{KKY}. However, while their proof uses Ramsey theoretic arguments, our proof relies on \cref{thm:zarankiewicz} instead.

\begin{theorem}\label{thm:smallest_eigenval}
Let $\lambda>0$ and let $G$ be a graph with average degree $d$ and smallest eigenvalue at least $-\lambda$. Then $G$ contains a clique of size $\Omega_{\lambda}(d)$.
\end{theorem}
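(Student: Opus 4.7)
The plan is to combine the Zarankiewicz-type result \cref{thm:zarankiewicz} with a short quadratic-form argument driven by the eigenvalue hypothesis. Let $A$ be the adjacency matrix of $G$ and let $n=|V(G)|$. By the auxiliary \cref{lemma:smallest_eigenvalue}, the assumption that the smallest eigenvalue is at least $-\lambda$ gives $\gamma_2(A)\leq 2\lambda$, and $A$ has exactly $nd$ one-entries. Applying \cref{thm:zarankiewicz} with $\gamma=2\lambda$ and $t:=\lfloor d/C(\lambda)\rfloor$ therefore produces an all-ones $t\times t$ submatrix of $A$ (we may assume $d$ exceeds some constant depending on $\lambda$, as otherwise the conclusion holds trivially by taking a single vertex). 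Because $A$ has zero diagonal, the row-set $X$ and column-set $Y$ of this submatrix must be disjoint; they are subsets of $V(G)$ of size $t=\Omega_\lambda(d)$, and every pair in $X\times Y$ is an edge of $G$.

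To turn this complete bipartite subgraph into an ordinary clique, I would test $A$ against the vector $v\in\mathbb{R}^{V(G)}$ that equals $+1$ on $X$, $-1$ on $Y$, and $0$ elsewhere. Since $A$ has zero diagonal, a direct expansion yields
$$v^{T}Av \;=\; 2\bigl(e(X)+e(Y)-e(X,Y)\bigr)\;=\;2\bigl(e(X)+e(Y)\bigr)-2t^{2},$$
while the eigenvalue hypothesis forces $v^{T}Av\geq -\lambda\|v\|^{2}=-2\lambda t$. Rearranging gives $e(X)+e(Y)\geq t^{2}-\lambda t$, so the induced subgraph $G[X\cup Y]$ on $2t$ vertices has at least $2t^{2}-\lambda t$ edges, hence at most $\binom{2t}{2}-(2t^{2}-\lambda t)=(\lambda-1)t$ non-edges. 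The complement of $G[X\cup Y]$ therefore has average degree at most $\lambda-1$, and the Caro--Wei inequality supplies an independent set of size at least $2t/\lambda$ in this complement, i.e., a clique in $G$ of size $\Omega_\lambda(d)$.

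The heavy lifting is already packaged in \cref{thm:zarankiewicz} and \cref{lemma:smallest_eigenvalue}, so the only genuinely new step is the ``defect'' calculation that promotes $K_{t,t}$ to a nearly complete graph on $X\cup Y$ via the smallest-eigenvalue hypothesis. The main subtlety I anticipate is bookkeeping: verifying the disjointness of $X$ and $Y$ (which is automatic, since any common vertex would force a $1$ on the diagonal of $A$) and confirming that the regime $d=O_\lambda(1)$ is genuinely trivial for the conclusion. Everything else is a one-line convexity estimate.
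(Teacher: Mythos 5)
Your proof is correct, and it takes a genuinely different (and cleaner) route than the paper's from the point where the complete bipartite subgraph $K_{t,t}$ on $X \cup Y$ has been found. The paper promotes this $K_{t,t}$ to a clique indirectly: it first proves (via an auxiliary forbidden-subgraph lemma plus Cauchy interlacing) that $X$ cannot contain two disjoint $O_\lambda(1)$-sized sets with no edges between them, then applies \cref{thm:zarankiewicz} a \emph{second} time to the complement of $G[X]$, concluding that this complement has average degree $O_\lambda(1)$, and finishes with Tur\'an. You instead test the adjacency matrix directly against the signed indicator vector $\mathbf{1}_X - \mathbf{1}_Y$: the eigenvalue hypothesis yields $v^{T}Av \geq -2\lambda t$, which immediately bounds the number of non-edges in $G[X\cup Y]$ by $(\lambda-1)t$, and then Caro--Wei (equivalently Tur\'an) finishes. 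This avoids both \cref{lemma:forbidden_subgraph} and the second invocation of \cref{thm:zarankiewicz}, and extracts the eigenvalue information in one step rather than through an interlacing argument. The two proofs share the same skeleton ($\gamma_2(A)\leq 2\lambda$ via \cref{lemma:smallest_eigenvalue}, then \cref{thm:zarankiewicz} to get $K_{t,t}$ with $t=\Omega_\lambda(d)$), and both hand the clique to Tur\'an at the end; your middle step is the more elementary one. One small point worth making explicit: the bound $(\lambda-1)t$ on the non-edge count presupposes $\lambda\geq 1$, but this is automatic whenever $G$ has an edge (a single edge already forces smallest eigenvalue $\leq -1$), and the empty case is trivial.
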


We present the proof of \cref{thm:smallest_eigenval} in \cref{sect:smallest_eig}.

\subsubsection{Equiangular lines} A collection of lines $\mathcal{L}$ in $\mathbb{R}^d$ through the origin is \emph{equiangular} with angle $\alpha$ if the angle between any two lines of $\mathcal{L}$ is $\alpha$. It is an old problem of Lemmens and Siedel \cite{LemmensSiedel} to determine the maximum  size of an equiangular set of lines with angle $\alpha$ in $\mathbb{R}^d$. This problem was essentially solved recently by Jiang, Tidor, Yao, Zhang and Zhao \cite{JTYZZ}, building on the work of  Balla,  Dr\"axler, Keevash, and Sudakov \cite{BDKS}. For the extensive history of equiangular lines and further quantitative improvements on this problem, we refer the interested reader to \cite{Balla}.

A collection of equiangular lines with angle $\alpha$ can be represented by a symmetric Boolean matrix (or equivalently a graph) as follows. Pick a unit direction vector $v_{\ell}$ for each line $\ell$, and let $M$ be the matrix whose rows and columns are represented by the elements of $\mathcal{L}$, and set $M(\ell_1,\ell_2)=1$ if the angle between $v_{\ell_1}$ and $v_{\ell_2}$ is $\alpha$, and set $M(\ell_1,\ell_2)=0$ if this angle is $\pi-\alpha$. Set the diagonal entries 0. If $t=\cos(\alpha)$, then $M=\frac{1}{2t}N-\frac{1}{2t}I+J$, where $N$ is the Gram matrix of the vectors $\{v_{\ell}\}_{\ell\in \mathcal{L}}$. From this, $\gamma_2(N)\leq 1/t+1$. Hence, if $\alpha$ is bounded away from $\pi/2$, the Boolean matrix $M$ has bounded $\gamma_2$-norm, so our results immediately apply, giving interesting results about the Ramsey properties of the graphs associated with equiangular lines. In particular, we get the following immediate corollary of \cref{thm:mono_rectangle}.

\begin{theorem}
    Let $\delta>0$, then there exists $c=c(\delta)>0$ such that the following holds. Let $\alpha\in [0,\pi/2-\delta)$, and let $\mathcal{A}$ and $\mathcal{B}$ be two sets of lines through the origin in a real space such that the angle between $\ell_1$ and $\ell_2$ is $\alpha$ for every $(\ell_1,\ell_2)\in \mathcal{A}\times \mathcal{B}$. Then, given any choice of direction vectors $v_{\ell}$ for every $\ell\in\mathcal{A}\cup \mathcal{B}$, there exist $\mathcal{A}'\subset \mathcal{A}$ and $\mathcal{B}'\subset\mathcal{B}$ such that $|\mathcal{A}'|\geq c|\mathcal{A}|$, $|\mathcal{B}'|\geq c|\mathcal{B}|$, and either,  the angle between $v_{\ell_1}$ and $v_{\ell_2}$ is $\alpha$ for every $(\ell_1,\ell_2)\in\mathcal{A}'\times \mathcal{B}'$, or it is $\pi-\alpha$ for every $(\ell_1,\ell_2)\in\mathcal{A}'\times \mathcal{B}'$.
\end{theorem}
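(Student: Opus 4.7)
The plan is to reduce the statement to a direct application of \cref{thm:mono_rectangle}. Set $t = \cos(\alpha)$ and note that since $\alpha \in [0,\pi/2-\delta)$, we have $t \geq \sin\delta > 0$. Fix the given unit directions $v_\ell$ for every $\ell \in \mathcal{A} \cup \mathcal{B}$, and define the $\mathcal{A} \times \mathcal{B}$ real matrix $N$ by $N(\ell_1,\ell_2) = \langle v_{\ell_1}, v_{\ell_2}\rangle$, whose entries all lie in $\{+t,-t\}$ by hypothesis. Let $M$ be the Boolean matrix with $M(\ell_1,\ell_2)=1$ iff $\langle v_{\ell_1},v_{\ell_2}\rangle = +t$; equivalently,
\[
M \;=\; \frac{1}{2t}\,N \;+\; \frac{1}{2}\,J,
\]
where $J$ is the $\mathcal{A} \times \mathcal{B}$ all-ones matrix.

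The key observation is that $N$ is a submatrix of the full Gram matrix of the unit vectors $\{v_\ell\}_{\ell \in \mathcal{A}\cup\mathcal{B}}$. Any Gram matrix of unit vectors factors as $V^{\mathsf T} V$, where the columns of $V$ are unit vectors; in this factorization the maximum row $\ell_2$-norm of $V^{\mathsf T}$ and the maximum column $\ell_2$-norm of $V$ are both at most $1$, so its $\gamma_2$-norm is at most $1$, and the same bound passes to the submatrix $N$. Combined with $\gamma_2(J) \leq 1$ and the triangle inequality for $\gamma_2$,
\[
\gamma_2(M) \;\leq\; \frac{1}{2t}\,\gamma_2(N) \;+\; \frac{1}{2}\,\gamma_2(J) \;\leq\; \frac{1}{2\sin\delta} + \frac{1}{2} \;=:\; \gamma(\delta),
\]
a constant depending only on $\delta$.

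With the $\gamma_2$-bound in hand, applying \cref{thm:mono_rectangle} to $M$ produces subsets $\mathcal{A}' \subset \mathcal{A}$ and $\mathcal{B}' \subset \mathcal{B}$ of sizes at least $2^{-O(\gamma(\delta)^3)}|\mathcal{A}|$ and $2^{-O(\gamma(\delta)^3)}|\mathcal{B}|$, respectively, on which $M$ is either identically $1$ or identically $0$. By construction of $M$ this is exactly the desired dichotomy: either $\langle v_{\ell_1}, v_{\ell_2}\rangle = +t$ for all $(\ell_1,\ell_2) \in \mathcal{A}' \times \mathcal{B}'$ (angle $\alpha$), or $\langle v_{\ell_1}, v_{\ell_2}\rangle = -t$ for all such pairs (angle $\pi-\alpha$). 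Setting $c(\delta) = 2^{-O(\gamma(\delta)^3)}$ completes the argument. There is no substantive obstacle; the entire reduction rests on the elementary but essential fact that Gram matrices of unit vectors have $\gamma_2$-norm at most $1$, which converts the geometric hypothesis on the lines into a $\gamma_2$-bound that \cref{thm:mono_rectangle} can directly process.
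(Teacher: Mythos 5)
Your proof is correct and matches the paper's intended approach: express the sign matrix $M$ as an affine combination of the (off-diagonal block of the) Gram matrix and the all-ones matrix, bound its $\gamma_2$-norm by a constant depending only on $\delta$ using $\gamma_2(\text{Gram of unit vectors}) \le 1$, and invoke \cref{thm:mono_rectangle}. In fact your displayed identity $M = \frac{1}{2t}N + \frac12 J$ is the correct version of what the paper sketches (the paper's stated formula has a small coefficient typo), and the rest of your reasoning is exactly what the authors had in mind when calling the theorem an immediate corollary.
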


 We highlight that the study of  Ramsey properties of graphs associated to equiangular lines played a crucial role in the resolution of the above described problem of Lemmens and Siedel, see \cite{BDKS,JTYZZ} for further details.

\subsection{Discrepancy theory}
The $\gamma_2$-norm has important applications in discrepancy theory as well. Let $M$ be an $m\times n$ matrix, then the \emph{discrepancy} (also referred to as combinatorial discrepancy) of $M$ is defined as
$$\disc(M)=\min_{x\in \{-1,1\}^n} ||Mx||_{\infty}.$$
Here, $\|\cdot\|_{\infty}$ is the maximum absolute value of the entries. Moreover, the \emph{hereditary discrepancy} of $M$ is defined as $\herdisc(M)=\max_{N\subset M}  \disc(N),$ where the maximum is taken over all submatrices $N$ of $M$.  If $\cF$ is set system on a ground set $X$, then $\disc(\cF)=\disc(M)$ and $\herdisc(\cF)=\herdisc(M)$, where $M$ is the incidence matrix of $\cF$, with rows representing the sets. In combinatorial terms, the discrepancy of $\cF$ is the minimal $k$ for which there is a red-blue coloring of the elements of $X$ such that the numbers of red and blue elements in each set of $\cF$ differ by at most $k$.

Combinatorial discrepancy theory has its roots in the study of irregularities of distributions and it has become a highly active area of research since the 80's \cite{BC}. It has also found profound applications in computer science, see the book of Chazelle \cite{Ch} for a general reference. The following general inequality of Matou\v{s}ek, Nikolov, and Talwar \cite{MNT} establishes a sharp relation between the $\gamma_2$-norm and the hereditary discrepancy of arbitrary matrices:
    $$\Omega\left(\frac{\gamma_2(M)}{\log m}\right)\leq \herdisc(M)=O(\gamma_2(M)\sqrt{\log m}).$$
Combining this theorem with \cref{thm:four_cycle}, we immediately get that if $M$ is a four cycle-free Boolean matrix of degeneracy $d$, then $\herdisc(M)$ and $\sqrt{d}$ are equal up to logarithmic factors. For example, if $G$ is the incidence graph of $n$ points and $m$ lines in the plane, then $G$ is four cycle-free and the Szemer\'edi-Trotter theorem implies that it has degeneracy $O(n^{1/3})$. This bound is also the best possible, so we get close to optimal bounds on the discrepancy of geometric set systems generated by lines, recovering the results of Chazelle and Lvov \cite{ChL}.

\section{Preliminaries}
In this paper, we use mostly standard linear algebraic and graph theoretic notation. 

\subsection{Graph theory} Given a graph $G$, $e(G)=|E(G)|$ denotes the number of edges of $G$. If $U\subset V(G)$, then $G[U]$ is the subgraph of $G$ induced on the vertex set $U$. We make use of the following fundamental result of extremal graph theory.

\begin{theorem}[Tur\'an's theorem \cite{turan}]\label{thm:turan}
    Let $G$ be a graph on $n$ vertices such that the complement of $G$ has average degree at most $d$. Then $G$ contains a clique of size at least
    $\frac{n}{d+1}.$
    Equivalently, if the complement has $t$ edges, then there is a clique of size  at least
    $\frac{n^2}{2t+n}.$
\end{theorem}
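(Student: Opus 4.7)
The plan is to pass to the complement. Writing $H = \bar G$, a clique of size $k$ in $G$ is exactly an independent set of size $k$ in $H$, and the hypothesis becomes that $H$ has average degree at most $d$. So it suffices to show that every $n$-vertex graph $H$ of average degree at most $d$ has an independent set of size at least $n/(d+1)$.

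For this, I would use the Caro--Wei bound $\alpha(H) \ge \sum_{v \in V(H)} 1/(d_H(v)+1)$, which admits a clean probabilistic proof: pick a uniformly random linear order $\pi$ on $V(H)$ and let $I_\pi$ consist of the vertices that appear before all of their neighbors in $\pi$. Then $I_\pi$ is independent, and $v \in I_\pi$ with probability exactly $1/(d_H(v)+1)$, since among $v$ and its $d_H(v)$ neighbors, $v$ is equally likely to come first. Linearity of expectation then yields the bound, and some realization of $\pi$ attains it.

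Next I would apply Jensen's inequality to the convex function $x \mapsto 1/(x+1)$ on $\mathbb{R}_{\ge 0}$:
$$\sum_{v \in V(H)} \frac{1}{d_H(v)+1} \;\ge\; \frac{n}{\tfrac{1}{n}\sum_v d_H(v) + 1} \;\ge\; \frac{n}{d+1}.$$
This gives the first inequality of the theorem. The equivalent form follows by substituting the average degree $d = 2t/n$, which turns $n/(d+1)$ into $n^2/(2t+n)$. There is no real obstacle here; the only thing worth checking is the direction of Jensen, which is correct because $x \mapsto 1/(x+1)$ is convex. An alternative route by induction on $n$ (delete a vertex of maximum degree in $H$ together with its neighbors, apply induction to the remainder) is also possible but loses constants, whereas the Caro--Wei approach is tight and matches the stated bound exactly.
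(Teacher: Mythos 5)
Your argument is correct. Note that the paper states this as a cited classical result (Tur\'an's theorem) and does not supply a proof, so there is nothing internal to compare against. Your route — complement to reduce to an independent-set bound, then the Caro--Wei inequality $\alpha(H)\ge\sum_v 1/(d_H(v)+1)$ via the random-ordering argument, followed by Jensen applied to the convex function $x\mapsto 1/(x+1)$ — is one of the standard proofs and does yield exactly the stated bound $n/(d+1)$, with the second form following by substituting $d=2t/n$. All the steps check out, including the direction of Jensen.
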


\subsection{Basics of linear algebra} Given a real vector $v$, $||v||$ denotes the $\ell_2$-norm of $v$. The \emph{Hadamard product} (or entry-wise product) of two matrices $A$ and $B$ of size $m\times n$ is the $m\times n$ sized matrix $A\circ B$ defined as $(A\circ B)_{i,j}=A_{i,j}B_{i,j}$. We make use of the well known \emph{Cauchy interlacing theorem}, which we state here for the reader's convenience.

\begin{lemma}\label{lemma:cauchy}
Let $M$ be an $n\times n$ real symmetric matrix with eigenvalues $\lambda_1\geq \dots\geq \lambda_n$, and let $N$ be an $m\times m$ principal submatrix of $M$ with eigenvalues $\mu_1\geq \dots\geq \mu_m$. Then $\lambda_i\geq \mu_i\geq \lambda_{i+n-m}$ for $i=1,\dots,m$.
\end{lemma}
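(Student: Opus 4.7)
The plan is to derive the two interlacing inequalities independently, each time by invoking the Courant--Fischer characterization of the eigenvalues of a real symmetric matrix, namely
\[
\lambda_k \;=\; \max_{\substack{V \subseteq \mathbb{R}^n \\ \dim V = k}} \min_{0 \neq v \in V} \frac{v^{T}Mv}{v^{T}v} \;=\; \min_{\substack{V \subseteq \mathbb{R}^n \\ \dim V = n-k+1}} \max_{0 \neq v \in V} \frac{v^{T}Mv}{v^{T}v},
\]
together with the analogous pair of formulas for $N$ on $\mathbb{R}^m$.

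Let $W \subseteq \mathbb{R}^n$ denote the $m$-dimensional coordinate subspace spanned by the standard basis vectors indexed by the rows/columns of the principal submatrix $N$. The key link between $M$ and $N$ is the elementary observation that, under the natural identification of $W$ with $\mathbb{R}^m$, every $v \in W$ satisfies $v^{T}Mv = v^{T}Nv$. Consequently, the min-max formulas for the $\mu_i$ can be read as min-max formulas over subspaces of $\mathbb{R}^n$, subject to the additional constraint that the competing subspaces lie inside $W$.

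For the upper inequality $\lambda_i \geq \mu_i$, I would take an $i$-dimensional subspace $V_\ast \subseteq W$ that witnesses $\mu_i$ in the max-min form. Since $V_\ast$ is in particular an $i$-dimensional subspace of $\mathbb{R}^n$, it is a legitimate candidate in the unconstrained max-min for $\lambda_i$, so $\lambda_i \geq \mu_i$. For the lower inequality $\mu_i \geq \lambda_{i+n-m}$, I would instead use the min-max form: a subspace $V_\ast \subseteq W$ of dimension $m - i + 1 = n - (i+n-m) + 1$ witnessing $\mu_i$ is a legitimate candidate in the unconstrained min-max for $\lambda_{i+n-m}$, yielding $\mu_i \geq \lambda_{i+n-m}$.

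There is no real obstacle beyond keeping track of indices: the proof is essentially a routine application of Courant--Fischer combined with the remark that a principal submatrix corresponds to restricting the quadratic form to a coordinate subspace. If one preferred a more self-contained path, one could instead prove the two extremal cases $\mu_1 \leq \lambda_1$ and $\mu_m \geq \lambda_n$ directly from $v^TMv = v^TNv$ on $W$, and then induct on $n - m$ by peeling off one row and column at a time, thereby reducing the general statement to the base case $m = n - 1$.
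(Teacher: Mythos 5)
The paper cites the Cauchy interlacing theorem as a well-known fact and provides no proof of its own, so there is no paper argument to compare against. Your Courant--Fischer proof is the standard one and is correct: the $i$-dimensional subspace $V_\ast \subseteq W$ witnessing $\mu_i$ in the max--min form is a legitimate competitor for $\lambda_i$ over all of $\mathbb{R}^n$, and the dimension count $m - i + 1 = n - (i + n - m) + 1$ is exactly right for the min--max form to yield $\mu_i \geq \lambda_{i+n-m}$, with the identity $v^T M v = v^T N v$ on the coordinate subspace $W$ doing all the work.
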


\subsection{Matrix norms}
Given two matrices $A,B\in \mathbb{R}^{m\times n}$, their Frobenius inner product is defined as $$\langle A,B\rangle=\sum_{i=1}^m\sum_{j=1}^n A_{i,j}B_{i,j}=\tr(AB^T).$$ Let $M$ be an $m\times n$ matrix with singular values $\sigma_1,\dots,\sigma_k$, $k=\min\{m,n\}$. The \emph{Frobenius norm} of $M$ can be defined in multiple equivalent ways: $$||M||_F^2:=\langle M,M\rangle=\tr(MM^T)=\sum_{i=1}^{k}\sigma_i^2.$$ The \emph{trace-norm} of $M$ is $$||M||_{\tr}=\sigma_1+\dots+\sigma_k.$$ Finally, we write $$|M|=\sum_{i,j}|M_{i,j}|,$$ which, in case $M$ is a Boolean matrix, is just the number of one entries.

\subsection{The $\gamma_2$-norm}\label{sec:gamma2-properties}
The $\gamma_2$-norm of a matrix $M$ is defined as 
$$\gamma_2(M)=\min_{M=UV}||U||_{\row}||V||_{\col},$$
where $||U||_{\row}=||U||_{2\rightarrow\infty}$ denotes the maximum $\ell_2$-norm of the rows of $U$, and $||V||_{\col}=||V||_{1\rightarrow 2}$ denotes the maximum $\ell_2$-norm of the columns of $V$. 
Here, we collect some basic properties of the $\gamma_2$-norm, see e.g. \cite{LSS} as a general reference.

Let $M\in \mathbb{R}^{m\times n}$ and let $N$ be a real matrix.
\begin{enumerate}
    \item If $c\in \mathbb{R}$, then $\gamma_2(cM)=|c|\gamma_2(M)$.
    \item (monotonicity) If $N$ is a submatrix of $M$, then $\gamma_2(N)\leq \gamma_2(M)$.
    \item (subadditivity) If $M$ and $N$ have the same size, then $\gamma_2(M+N)\leq \gamma_2(M)+\gamma_2(N)$.
    \item\label{pr:gamma}  $\gamma_2(M)=\max||M\circ (u v^T)||_{\tr},$ where the maximum is over all unit vectors $u\in \mathbb{R}^m$, $v\in \mathbb{R}^{n}$.
    \item  $\gamma_2(M)\geq \frac{1}{\sqrt{mn}}||M||_{\tr}.$
    \item Duplicating rows or columns of $M$ does not change the $\gamma_2$-norm.
    \item $\gamma_2(M)\leq ||M||_{\row}$ and $\gamma_2(M)\leq ||M||_{\col}$.
    \item If $M$ is a non-zero Boolean matrix, then $\gamma_2(M)\geq 1$, with equality if and only if $M$ is the blow-up of a permutation matrix.
    \item If $M$ is a Boolean matrix, then $\gamma_2(M)\leq \sqrt{\mbox{rank}(M)}$.
    \item\label{pr:tensor} $\gamma_2(M\otimes N)=\gamma_2(M)\gamma_2(N)$, where $\otimes$ denotes the tensor product.
    \item\label{pr:hadamard} If $M$ and $N$ have the same size, then $\gamma_2(M\circ N)\leq \gamma_2(M)\gamma_2(N)$
\end{enumerate}

Here, we remark that (\ref{pr:hadamard}) follows from (\ref{pr:tensor}) as $M\circ N$ is a submatrix of $M\otimes N$.

\section{Sparsifying matrices}\label{sect:sparsifying}

In this section, we prove a technical result showing that if $M$ is a Boolean matrix with bounded $\gamma_2$-norm such that the density of zero entries is $\varepsilon>0$, then one can boost this to density at least $1-\varepsilon$ by passing to a linear-sized submatrix of $M$. Such a result can be proved in at least two different ways. One approach is based on noting that Boolean matrices of bounded $\gamma_2$-norm have bounded VC-dimension, and then invoke the \emph{ultra-strong regularity lemma} for such matrices, see e.g. \cite{FPS19}. Another approach is based on the discrepancy method, recently used in connection to the Log-rank conjecture \cite{HMST,L16,ST24}. We present the latter approach, as it provides quantitatively better bounds. We mostly follow the proof of Lemma 4.2 in \cite{HMST}.

Given an $m\times n$ Boolean matrix $M$, let $p(M)=|M|/mn$, that is, $p(M)$ is the density of one entries. Define the \emph{discrepancy} of $M$ as
$$\disc(M)=\max_{A\subset [m], B\subset [n]} \big||M[A\times B]|-p(M)|A||B|\big|.$$

\begin{lemma}\label{lemma:disc}
Let $M$ be a Boolean matrix such that $p(M)<1-\varepsilon$ and $\gamma_2(M)\leq \gamma$. Then
$$\disc(M)=\Omega(\varepsilon^2 |M|/\gamma).$$
\end{lemma}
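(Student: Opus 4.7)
The strategy is to pass to the centered matrix $\tilde M := M - pJ$ (with $p:=p(M)$ and $J$ the $m\times n$ all-ones matrix), translate its discrepancy into a cut-norm quantity, and lower-bound the cut norm via $\gamma_2$-duality.

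First I would collect two elementary facts about $\tilde M$. By subadditivity of $\gamma_2$ together with $\gamma_2(J)=1$, one has $\gamma_2(\tilde M)\le \gamma+p\le 2\gamma$ (where $\gamma\ge 1$ by property (8) of \cref{sec:gamma2-properties}). A direct count over the $p\,mn$ one-entries and $(1-p)mn$ zero-entries of $M$ gives
\[
\|\tilde M\|_F^2 \;=\; p(1-p)\,mn \;\ge\; \varepsilon\,|M|,
\]
using the hypothesis $1-p\ge \varepsilon$. Also, unpacking the definition of discrepancy shows $\disc(M)=\max_{A,B}|\mathbf{1}_A^\top \tilde M \mathbf{1}_B|$.

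Next I would relate $\disc(M)$ to the cut norm $\|\tilde M\|_{\infty\to 1}:=\max_{x\in\{\pm 1\}^m,\,y\in\{\pm 1\}^n}x^\top \tilde M y$. Writing each sign vector as the difference of two $\{0,1\}$-indicators and expanding into four rectangle sums yields $\|\tilde M\|_{\infty\to 1}\le 4\disc(M)$. The heart of the argument is the reverse-type inequality
\[
\|\tilde M\|_{\infty\to 1} \;\gtrsim\; \frac{\|\tilde M\|_F^2}{\gamma_2(\tilde M)},
\]
which I would establish by $\gamma_2$-duality. The dual norm admits the representation $\gamma_2^*(N)=\max_{\|u_i\|,\|v_j\|\le 1}\sum_{i,j}N_{ij}\langle u_i,v_j\rangle$, so $\|\tilde M\|_F^2=\langle\tilde M,\tilde M\rangle\le \gamma_2(\tilde M)\gamma_2^*(\tilde M)$. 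Grothendieck's inequality then gives $\gamma_2^*(\tilde M)\le K_G\|\tilde M\|_{\infty\to 1}$, where $K_G$ is the (real) Grothendieck constant. Chaining these three inequalities,
\[
\disc(M) \;\ge\; \frac{\|\tilde M\|_F^2}{8K_G\gamma} \;\ge\; \frac{\varepsilon\,|M|}{8K_G\gamma},
\]
which is even stronger than (and hence implies) the claimed bound $\Omega(\varepsilon^2|M|/\gamma)$ once we use $\varepsilon\le 1$.

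The main obstacle I anticipate is precisely the cut-norm lower bound. My Grothendieck route is clean but invokes a nontrivial analytic tool, and it in fact over-delivers by an extra factor of $\varepsilon$. The authors state that they follow the proof of Lemma~4.2 in~\cite{HMST}, which is most likely more combinatorial: a biased $\pm 1$-rounding in which the sampling probabilities on rows and columns are calibrated to the local densities of $M$, together with a second-moment computation bounding the inner product $\langle\tilde M,\tilde M\rangle$ in terms of the cut norm. Such a rounding would naturally lose an additional $\varepsilon$ factor compared to the Grothendieck approach and reproduce exactly the stated $\Omega(\varepsilon^2|M|/\gamma)$. In either case, the two guiding ingredients remain the centered Frobenius identity $\|\tilde M\|_F^2=p(1-p)mn$ and the subadditivity bound $\gamma_2(\tilde M)\le 2\gamma$.
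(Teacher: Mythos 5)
Your proposal is correct and follows essentially the same route as the paper's proof: center $M$ as $N = M - p(M)J$, relate $\disc(M)$ to the cut norm of $N$, invoke Grothendieck's inequality to pass to $\gamma_2^*(N)$, and lower-bound $\gamma_2^*(N)$ by $\|N\|_F^2/\gamma_2(N)$. The one thing you misjudged is your closing speculation: the paper does \emph{not} switch to a combinatorial biased rounding. It uses exactly the Grothendieck step you propose, simply making the duality inequality $\|N\|_F^2 \le \gamma_2(N)\,\gamma_2^*(N)$ concrete by exhibiting a near-optimal factorization $N=UV$ and taking $x_i = u_i/(1+\gamma)$, $y_j = v_j$ as witnesses in $\gamma_2^*$. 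Your remark that the argument actually yields the stronger bound $\Omega(\varepsilon|M|/\gamma)$ is also accurate: the paper just bounds $\sum_{i,j}(M_{i,j}-p)^2 \ge \varepsilon^2|M|$ crudely using only the one-entries, whereas the exact value is $p(1-p)mn = (1-p)|M| \ge \varepsilon|M|$. So the extra factor of $\varepsilon$ in the paper's statement is a harmless loss in how the lemma is phrased, not something forced by a weaker rounding scheme.
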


\begin{proof}
Let $m\times n$ be the size of $M$, and let $N=M-p(M)J$. Then $$\disc(M)=\max_{A\subset [m], B\subset [n]} |N[A\times B]|,$$ where the right-hand-side is just the  \emph{cut-norm} of $N$. The dual of the $\gamma_2$-norm is defined as $$\gamma_2^*(N)=\max \Big|\sum_{i=1}^{m}\sum_{j=1}^n N_{i,j}\langle x_i,y_j\rangle\Big|,$$
where the maximum is taken over all vectors $x_1, \dots, x_m, y_1, \dots, y_n\in \mathbb{R}^{d}$ and all dimensions $d$ such that $\|x_i\|, \|y_j\|\leq 1$. It follows from Grothendieck's inequality that the $\gamma_2^*$-norm and cut-norm are equal up to absolute constant factors (see e.g. \cite{LS}), so we have
$$\disc(M)=\Omega(\gamma_2^*(N)).$$
We have $\gamma_2(N)\leq \gamma_2(p(M)J)+\gamma_2(M)\leq 1+\gamma$, so there exists a factorization $N=UV$ such that $||U||_{\row}\leq 1+\gamma$ and $||V||_{\col}\leq 1$. For $i=1,\dots,m$, let $x_i=u_i/(1+\gamma)$, where $u_1,\dots,u_m$ are the rows of $U$, and let $y_1,\dots,y_n$ be the columns of $V$. Then $||x_i||,||y_j||\leq 1$, so
$$\gamma_2^{*}(N)\geq \Big|\sum_{i=1}^{m}\sum_{j=1}^n N_{i,j}\langle x_i,y_j\rangle\Big|=\frac{1}{\gamma+1}\sum_{i=1}^m\sum_{j=1}^n N_{i,j}^2=\frac{1}{\gamma+1}\sum_{i=1}^m\sum_{j=1}^n (M_{i,j}-p(M))^2\geq \frac{\varepsilon^2|M|}{\gamma+1}.$$
This finishes the proof.
\end{proof}

We also use the following consequence of Claim 2.2 from \cite{ST24}.

\begin{lemma}\label{lemma:half_sized}
Let $M$ be an $m\times n$ binary matrix. Then $M$ contains an $m/2\times n/2$ submatrix $N$ such that $p(N)\leq p(M)-\Omega(\frac{\disc(M)}{mn})$.
\end{lemma}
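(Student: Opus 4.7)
Set $D := \disc(M)$, and fix a discrepancy witness $A \subseteq [m]$, $B \subseteq [n]$ so that $|f(A,B)| = D$, where
\[f(S,T) := |M[S \times T]| - p(M)\,|S|\,|T|.\]
The identity $\sum_{\varepsilon_1, \varepsilon_2 \in \{0,1\}} f(A^{\varepsilon_1}, B^{\varepsilon_2}) = 0$, with $A^0 := A$, $A^1 := A^c$ (and likewise for $B$), follows from $\sum_{i,j}(M_{ij} - p(M)) = 0$. If $f(A,B) = -D$, take $A' \times B' := A \times B$; if $f(A,B) = +D$, the three other terms sum to $-D$, so at least one of them is $\leq -D/3$, and I take $A' \times B'$ to be the corresponding sub-rectangle. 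In either case $f(A', B') \leq -D/3$.

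In the easy case where $|A'| \geq m/2$ and $|B'| \geq n/2$, a standard averaging argument produces an $m/2 \times n/2$ submatrix of $A' \times B'$ whose density is at most the density of $A' \times B'$, namely $p(M) + f(A', B')/(|A'||B'|) \leq p(M) - D/(3mn)$, finishing this case.

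Otherwise $|A'| < m/2$ or $|B'| < n/2$. The plan is to extend $A'$ (respectively $B'$) to a set $A''$ of size $m/2$ (respectively $B''$ of size $n/2$) by adjoining rows (columns) from outside, chosen greedily so as to minimize the number of ones contributed within $B'$ (respectively within $A'$). The key input is the pair of strip-discrepancy bounds $|f([m], B')|, |f(A', [n])| \leq D$: these control the total number of ones that the extension can introduce, so that the $D/3$ density deficit of $A' \times B'$ survives (up to constants) in the final balanced submatrix.

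The main obstacle is the doubly unbalanced case $|A| > m/2$ and $|B| > n/2$, in which every complementary sub-rectangle fails the $m/2 \times n/2$ dimension bound, so both rows and columns must be extended. Naive averaging allows the extension to add on the order of $D$ extra ones, which could in principle neutralize the $D/3$ deficit. Resolving this requires a careful two-step averaging argument simultaneously tracking the row and column extensions and bookkeeping the four auxiliary $f$-values (bounded in absolute value by $D$), so that the final $m/2 \times n/2$ submatrix retains a density deficit of order $\Omega(D/mn)$.
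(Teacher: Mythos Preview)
The paper does not prove this lemma; it is quoted as a consequence of Claim~2.2 in Sudakov--Tomon~\cite{ST24}, so there is no in-paper argument to compare against. On its own merits, your plan handles the easy cases correctly but leaves the crucial case open, and the concern you flag is real: a greedy extension in one direction can cost up to half of your deficit (since $|f(A'^c,B')|\le D$ while the averaging ratio $\frac{m/2-|A'|}{m-|A'|}$ can be as large as $1/2$), and a second extension can cost the other half, so naive bookkeeping after two extensions yields only $f\le 0$.

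The missing observation that makes the argument go through cleanly is a symmetry you are not exploiting: to exhibit a \emph{low}-density $\tfrac{m}{2}\times\tfrac{n}{2}$ block it suffices to exhibit one whose density deviates from $p(M)$ by $\Omega(D/mn)$ in \emph{either} direction. Indeed, any $\tfrac{m}{2}\times\tfrac{n}{2}$ block $R\times C$ sits in a $2\times 2$ grid of four $\tfrac{m}{2}\times\tfrac{n}{2}$ blocks whose $f$-values sum to zero, so if $f(R,C)\ge cD$ then some other quadrant has $f\le -cD/3$. This dissolves your ``main obstacle'' immediately: when $f(A,B)=+D$ with $|A|>m/2$ and $|B|>n/2$, do not pass to a complement---just average $A\times B$ down to a high-density $\tfrac{m}{2}\times\tfrac{n}{2}$ block and flip. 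The only case that then genuinely needs a double extension is $f(A,B)=-D$ with $|A|<m/2$ and $|B|<n/2$; there, after greedily extending $A$ to $R$ of size $m/2$ (giving $f(R,B)\le -D/2$ via $|f(A^c,B)|\le D$), split on whether $f(R,B^c)\ge D/4$: if so, $R\times B^c$ is an $\tfrac{m}{2}\times(\ge\tfrac{n}{2})$ high-density block (average down, then flip); if not, the second greedy extension of $B$ to size $n/2$ loses at most $\tfrac{1}{2}\cdot\tfrac{D}{4}=D/8$, leaving $f(R,C)\le -3D/8$.
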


Combining the previous two lemmas, we deduce the following.
\begin{lemma} \label{lemma:disc_decrement}
Let $M$ be an $m\times n$ Boolean matrix such that $\gamma_2(M)\leq \gamma$, and $p(M)\leq 1-\varepsilon$. Then $M$ contains an $m/2\times n/2$ submatrix $N$ such that $p(N)\leq p(M)(1-\Omega(\varepsilon^2/\gamma))$.
\end{lemma}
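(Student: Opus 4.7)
The plan is to simply chain Lemma \ref{lemma:disc} and Lemma \ref{lemma:half_sized} together, tracking the dependence on $p(M)$ so that the decrement comes out multiplicative rather than additive.

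First I would apply Lemma \ref{lemma:disc} to $M$. Since $\gamma_2(M)\leq \gamma$ and $p(M)\leq 1-\varepsilon$, it yields
\[
\disc(M)=\Omega\!\left(\frac{\varepsilon^2 |M|}{\gamma}\right)=\Omega\!\left(\frac{\varepsilon^2\, p(M)\, mn}{\gamma}\right),
\]
where in the last step I used $|M|=p(M)\,mn$. The key point is to keep the factor $p(M)$ visible, rather than just lower-bounding $\disc(M)$ by some absolute multiple of $|M|/\gamma$; this is what will turn the additive decrement in Lemma \ref{lemma:half_sized} into the multiplicative form required by the statement.

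Next, I would feed this bound into Lemma \ref{lemma:half_sized}. That lemma guarantees an $m/2 \times n/2$ submatrix $N\subset M$ with
\[
p(N)\leq p(M)-\Omega\!\left(\frac{\disc(M)}{mn}\right)\leq p(M)-\Omega\!\left(\frac{\varepsilon^2\, p(M)}{\gamma}\right)=p(M)\!\left(1-\Omega\!\left(\frac{\varepsilon^2}{\gamma}\right)\right),
\]
which is exactly the claimed bound.

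There is essentially no obstacle here — both ingredients are already stated in the excerpt and the proof is a one-line chaining — so I do not anticipate any technical difficulty. The only thing to be careful about is the use of $|M|=p(M)mn$ in the first step (rather than a weaker bound like $|M|\leq mn$), since otherwise Lemma \ref{lemma:half_sized} would only give $p(N)\leq p(M)-\Omega(\varepsilon^2/\gamma)$, which is weaker than the multiplicative decrement we want when $p(M)$ is small.
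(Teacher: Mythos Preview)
Your proposal is correct and matches the paper's own proof essentially line for line: apply Lemma~\ref{lemma:half_sized} to get an $m/2\times n/2$ submatrix with $p(N)\leq p(M)-\Omega(\disc(M)/mn)$, then plug in the bound $\disc(M)=\Omega(\varepsilon^2|M|/\gamma)=\Omega(\varepsilon^2 p(M)mn/\gamma)$ from Lemma~\ref{lemma:disc}. Your observation that one must retain the factor $p(M)$ via $|M|=p(M)mn$ is exactly the point.
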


\begin{proof}
By \cref{lemma:half_sized},  $M$ contains an $m/2\times n/2$ submatrix $N$ such that $p(N)\leq p(M)-\Omega(\frac{\disc(M)}{mn})$. However, by \cref{lemma:disc}, we have $\disc(M)=\Omega(\varepsilon^2|M|/\gamma)=\Omega(\varepsilon^2 p(M)mn/\gamma)$, finishing the proof.
\end{proof}

\begin{lemma}\label{lemma:sparsifying}
Let $M$ be an $m\times n$ Boolean matrix such that $\gamma_2(M)\leq \gamma$, and $p(M)\leq 1-\varepsilon$, and let $0<\delta<1/2$. Then $M$ contains an $m'\times n'$ submatrix $N$ such that every row of $N$ has at most $\delta n'$ one entries, every column has at most $\delta m'$ one entries, and $m'/m, n'/n > 2^{-O((\gamma/\varepsilon^2)\log(1/\delta))}$.
\end{lemma}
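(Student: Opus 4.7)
The plan is to iterate Lemma~\ref{lemma:disc_decrement} until the density of a sub-matrix drops below $\delta/4$, and then apply a single Markov-type cleanup to remove the few remaining heavy rows and columns.

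First, set $M_0 := M$ and, as long as $p(M_i) > \delta/4$, apply Lemma~\ref{lemma:disc_decrement} to produce $M_{i+1}$, a submatrix of $M_i$ with half as many rows and columns and satisfying
\[
p(M_{i+1}) \le p(M_i)\,(1 - c\varepsilon^2/\gamma)
\]
for the absolute constant $c>0$ hidden in the $\Omega(\cdot)$. This is legal at every step: the hypothesis $p(M_i) \le 1-\varepsilon$ persists because the densities are non-increasing and $p(M_0) \le 1-\varepsilon$, and monotonicity of $\gamma_2$ gives $\gamma_2(M_i) \le \gamma_2(M) \le \gamma$. Solving $(1 - c\varepsilon^2/\gamma)^t \le \delta/4$ shows that after
\[
t = O\!\left(\frac{\gamma}{\varepsilon^2}\log(1/\delta)\right)
\]
iterations we reach a submatrix $M_t$ of dimensions $m_t \times n_t$ with $m_t \ge m\cdot 2^{-t}$, $n_t \ge n\cdot 2^{-t}$, and $p(M_t) \le \delta/4$.

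Second, in $M_t$ let $R \subseteq [m_t]$ consist of the rows containing at most $\delta n_t/2$ one-entries, and let $C \subseteq [n_t]$ consist of the columns containing at most $\delta m_t/2$ one-entries. Because $|M_t| \le (\delta/4) m_t n_t$, a direct Markov bound forces $|R| \ge m_t/2$ and $|C| \ge n_t/2$. Taking $N := M_t[R \times C]$, of dimensions $m' \times n'$ with $m' \ge m_t/2$ and $n' \ge n_t/2$, every row of $N$ has at most $\delta n_t/2 \le \delta n'$ ones and every column has at most $\delta m_t/2 \le \delta m'$ ones. Combining the two stages yields $m'/m,\, n'/n \ge 2^{-(t+1)} = 2^{-O((\gamma/\varepsilon^2)\log(1/\delta))}$, as required.

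There is no genuine obstacle here beyond bookkeeping, since all the analytic work is done by Lemma~\ref{lemma:disc_decrement}. The one mild subtlety worth noting is the choice of thresholds $\delta n_t/2$ and $\delta m_t/2$ in the cleanup (rather than $\delta n_t$, $\delta m_t$): the extra factor of $1/2$ is what allows the row/column degree bounds to survive simultaneously the shrinkage of both dimensions when restricting to $R \times C$.
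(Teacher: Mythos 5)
Your proof is correct and matches the paper's argument essentially step for step: iterate \cref{lemma:disc_decrement} until the density of one-entries falls below $\delta/4$ (the paper fixes the number $I$ of iterations in advance rather than using a stopping rule, but this makes no difference up to constants), then prune the heavy rows and columns by the same Markov-type count, with the same threshold of $\delta n_t/2$ and $\delta m_t/2$ to absorb the simultaneous halving of both dimensions.
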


\begin{proof}
Define a sequence of submatrices $M_0,M_1,\dots$ of $M$ as follows. Let $M_0=M$. If $M_i$ of size $m_i\times n_i$ is already defined such that $p_i=p(M_i)$, let $M_{i+1}$ be the $m_{i}/2\times n_{i}/2$ sized submatrix of $M_i$ given by ~\cref{lemma:disc_decrement}, so that $p(M_{i+1})\leq p(M_i)(1-\Omega(\varepsilon^2/\gamma))$. Then $m_i=m2^{-i}$, $n_i=n2^{-i}$, and 
$$p(M_i)\leq p(M)(1-\Omega(\varepsilon^2/\gamma))^i\leq p(M)e^{-\Omega(i\varepsilon^2/\gamma)}.$$ Hence, if $I = \left\lceil c\frac{\gamma}{\varepsilon^2} \log (1/\delta) \right\rceil$ for some $c$ sufficiently large, we have $p(M_I)\leq \delta/4$. Delete all rows of $M_I$ which contain more that $\delta n_I/2$ one entries, and delete all columns of $M_I$ which contain more than $\delta m_I/2$ one entries. Then we deleted at most $m_I/2$ rows and $n_I/2$ columns. If we let $N$ be the resulting matrix, then $N$ has size $m'\times n'$ with $m'\geq m_I/2= 2^{-I-1}m$ and $n'\geq n_I/2=2^{-I-1}n$, and each row of $N$ contains at most $\delta m_I/2 \leq \delta m'$ one entries, and each column contains at most $\delta n_I/2 \leq \delta n'$ one entries. 
\end{proof}

\section{Large all-ones or all-zeros submatrices}\label{sect:mono_rectangle}

In this section, we prove \cref{thm:mono_rectangle} and \cref{cor:trace}. First, we show that  \cref{thm:mono_rectangle} indeed implies \cref{cor:trace}. To this end, we prove the following simple lemma.

\begin{lemma}\label{lemma:trace_to_gamma}
Let $M$ be an $m\times n$ matrix such that $||M||_{\tr}\leq \gamma \sqrt{mn}$. Then for every $\varepsilon\in (0,1/2]$, $M$ contains an $m'\times n'$ submatrix $M'$ such that $m'\geq (1-\varepsilon)m$, $n'\geq (1-\varepsilon)n$, and $\gamma_2(M')\leq \gamma/\varepsilon$.
\end{lemma}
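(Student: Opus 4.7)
The plan is to produce the desired factorization of $M'$ directly from the singular value decomposition of $M$, and then to pass to a large submatrix by throwing away the rows and columns on which this factorization has large norms. This is a standard ``Markov on the SVD'' trick, and the key observation is that the scaling works out neatly because $\|M\|_{\tr}$ controls both the total row-weight and the total column-weight of the canonical factorization.

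Concretely, I would write the SVD as $M = U\Sigma V^T$, where $U \in \mathbb{R}^{m \times k}$ and $V \in \mathbb{R}^{n \times k}$ have orthonormal columns, $\Sigma = \mathrm{diag}(\sigma_1,\dots,\sigma_k)$, and $k = \min\{m,n\}$. Then I factor $M = XY$ with $X = U\Sigma^{1/2}$ and $Y = \Sigma^{1/2}V^T$. The squared norm of the $i$-th row of $X$ is $r_i = \sum_{\ell}\sigma_\ell U_{i\ell}^2$, and since $U$ has orthonormal columns,
\[
\sum_{i=1}^m r_i \;=\; \tr(X X^T) \;=\; \tr(\Sigma) \;=\; \|M\|_{\tr} \;\leq\; \gamma\sqrt{mn}.
\]
An identical computation for the columns of $Y$ gives $\sum_{j=1}^n c_j = \|M\|_{\tr}$, where $c_j = \sum_\ell \sigma_\ell V_{j\ell}^2$.

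Now I apply Markov's inequality on each side with thresholds tuned to make the final product come out right. Let
\[
A = \bigl\{i : r_i \leq (\gamma/\varepsilon)\sqrt{n/m}\bigr\}, \qquad B = \bigl\{j : c_j \leq (\gamma/\varepsilon)\sqrt{m/n}\bigr\}.
\]
Then $|A| \geq (1-\varepsilon)m$ and $|B| \geq (1-\varepsilon)n$ by Markov applied to $\sum r_i$ and $\sum c_j$ respectively. The restriction of the factorization gives $M[A,B] = X_A Y_B$, where $X_A$ consists of the rows of $X$ indexed by $A$ and $Y_B$ of the columns of $Y$ indexed by $B$. By definition of these sets,
\[
\|X_A\|_{\row} \leq \sqrt{(\gamma/\varepsilon)\sqrt{n/m}} = (\gamma/\varepsilon)^{1/2}(n/m)^{1/4},
\qquad
\|Y_B\|_{\col} \leq (\gamma/\varepsilon)^{1/2}(m/n)^{1/4},
\]
so multiplying yields $\gamma_2(M[A,B]) \leq \|X_A\|_{\row}\|Y_B\|_{\col} \leq \gamma/\varepsilon$, as required.

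There is no real obstacle here; the only thing to verify is that Markov's inequality applied to the row side removes at most $\varepsilon m$ indices and to the column side at most $\varepsilon n$ indices under these particular thresholds, and that the geometric-mean cancellation $(n/m)^{1/4}(m/n)^{1/4} = 1$ produces the clean bound $\gamma/\varepsilon$. If one wanted a slicker presentation, one could instead invoke property (\ref{pr:gamma}) of the $\gamma_2$-norm and argue on the dual side, but the SVD-factorization route above is the shortest and avoids any appeal to the dual characterization.
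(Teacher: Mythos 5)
Your proof is correct and follows essentially the same route as the paper: factor through $U\Sigma^{1/2}$ and $\Sigma^{1/2}V^T$, note that the total row-weight and total column-weight both equal $\|M\|_{\tr}$, and then discard the $\varepsilon m$ heaviest rows and $\varepsilon n$ heaviest columns. The paper phrases the pruning step as ``keep the rows/columns of smallest $\ell_2$-norm'' rather than explicitly invoking Markov's inequality, but this is a cosmetic difference; the thresholds and the geometric-mean cancellation $(n/m)^{1/4}(m/n)^{1/4}=1$ are identical.
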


\begin{proof}
Let $k=\min\{m,n\}$, and let $\sigma_1,\dots,\sigma_k$ be the singular values of $M$. Let $M=A\Sigma B$ be the singular value decomposition of $M$, that is, $A$ is an $m\times k$ matrix, whose columns are orthonormal vectors, $B$ is $k\times n$ matrix, whose rows are orthonormal vectors, and $\Sigma$ is a $k\times k$ diagonal matrix, whose diagonal entries are $\sigma_1,\dots,\sigma_{k}$. Let $U=A\Sigma^{1/2}$ and $V=\Sigma^{1/2} B$, so $M=UV$. Observe that $$||U||_F^2=||V||_F^2=\sum_{i=1}^k\sigma_i=||M||_{\tr}\leq \gamma \sqrt{mn}.$$ Let $U'$ be the submatrix of $U$ we get by keeping the $m'=(1-\varepsilon)m$ rows with smallest $\ell_2$-norms, and let $V'$ be the submatrix of $V$ we get by keeping the $n'=(1-\varepsilon) n$ columns with the smallest $\ell_2$-norms. Then $||U'||_{\row}^2\leq (\gamma/\varepsilon)\sqrt{n/m}$ and $||V'||_{\col}^2\leq (\gamma/\varepsilon)\sqrt{m/n}$. But then $M'=U'V'$ is an $m'\times n'$ submatrix of $M$ such that $\gamma_2(M')\leq ||U'||_{\row}||V'||_{\col}\leq \gamma/\varepsilon$. 
\end{proof}

\begin{proof}[Proof of \cref{cor:trace} assuming \cref{thm:mono_rectangle}]
By applying \cref{lemma:trace_to_gamma} with $\varepsilon=1/2$ to $M$, we get an $m/2\times n/2$ submatrix $M'$ such that $\gamma_2(M')\leq 2\gamma$. But then by \cref{thm:mono_rectangle}, $M'$ contains a $t\times u$ all-zeros or all-ones submatrix for some $\frac{t}{m},\frac{u}{m}\geq 2^{-O(\gamma^3)}$, finishing the proof.
\end{proof}

We now turn towards the proof of our main result,~\cref{thm:mono_rectangle}. To this end, we first argue that it is enough to consider the case $m=n$. Indeed, let $M$ be an $m\times n$ matrix such that $\gamma_2(M)\leq \gamma$. Let $M^{\otimes}$ be the matrix we get by repeating every row of $M$ exactly $n$ times and then repeating every column of the resulting matrix $m$ times, i.e.\ $M^{\otimes} = M \otimes J$ where $J$ is the $n \times m$ all-ones matrix. Then $M^{\otimes}$ is an $n'\times n'$ matrix with $n'=mn$ and $\gamma_2(M^{\otimes})=\gamma_2(M)\leq \gamma$. Assume that $M^{\otimes}$ contains a $t'\times t'$ all-zeros or all-ones submatrix, where $t'\geq cn'$ for some $c=c(\gamma)>0$. Then removing repeated rows and columns, this gives a $t\times u$ sized all-zeros or all-ones submatrix of $M$ with $t=\lceil\frac{t'}{n}\rceil$ and $u=\lceil\frac{t'}{m}\rceil$, so that $\frac{t}{m},\frac{u}{n}\geq c$.

Therefore, in the rest of the argument, we assume that $m=n$. Instead of \cref{thm:mono_rectangle}, we prove the slightly stronger result that if at least half of the entries are zero, then one can find a large all-zero submatrix.

\begin{theorem}\label{thm:all_zero}
     Let $M$ be an $n\times n$ Boolean matrix such that $\gamma_2(M)\leq \gamma$ and $p(M)\leq 1/2$. Then $M$ contains a $t\times t$ all-zeros submatrix for some $t=n2^{-O(\gamma^3)}$.
\end{theorem}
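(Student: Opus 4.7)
The plan is a two-phase argument: first, reduce to a very sparse submatrix via the discrepancy-based sparsification of \cref{lemma:sparsifying}; second, extract a large all-zeros block from that sparse submatrix using the $\gamma_2$-bound. For Phase 1, apply \cref{lemma:sparsifying} to $M$ with $\varepsilon = 1/2$ and $\delta = 2^{-C\gamma^2}$ for a suitably large constant $C>0$. This yields an $n_1 \times n_1$ submatrix $N$ of $M$ with
\[
n_1 \geq n \cdot 2^{-O((\gamma/\varepsilon^2)\log(1/\delta))} = n \cdot 2^{-O(\gamma^3)},
\]
such that every row and every column of $N$ contains at most $\delta n_1$ ones. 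By monotonicity of $\gamma_2$, we still have $\gamma_2(N) \leq \gamma$, and the allowed size loss in the final bound is essentially fully spent here.

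For Phase 2, I would extract a $t \times t$ all-zeros submatrix of $N$ with $t = \Omega(n_1)$ by an iterative greedy procedure. Maintain sets $A \subseteq [n_1]$ and $B \subseteq [n_1]$ with $N[A \times B]$ all-zero, starting from $A = [n_1]$, $B = \emptyset$. At each step, pick a column $j \notin B$ whose one-count inside $A$ is close to the average column-sum in the restricted matrix $N[A \times ([n_1] \setminus B)]$; add $j$ to $B$ and remove from $A$ the rows having a one in column $j$. A simple averaging bound always produces a column with one-count at most the average, controlling the shrinkage of $|A|$. Here the role of the $\gamma_2$-bound is essential: via property~(\ref{pr:gamma}), one has the trace-norm control $\|N[A \times B']\|_{\tr} \leq \gamma\sqrt{|A|\,|B'|}$ for every sub-rectangle, which forbids the ones of $N$ from concentrating on a small number of heavy columns. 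This concentration bound has to persist when we restrict to the shrinking rectangles appearing during the greedy process, guaranteeing that many "balanced" columns are available at every step.

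The main obstacle will be converting the trace-norm control into a quantitative concentration statement for the column-sum distribution of $N$ strong enough to run the greedy extraction for $\Omega(n_1)$ steps without collapsing $|A|$. Using only the row/column sparsity guarantees produced by Phase 1, such a greedy argument naturally stalls at an extremely asymmetric all-zeros rectangle of size $\Omega(n_1) \times O(1/\delta)$; upgrading this to a balanced $\Omega(n_1) \times \Omega(n_1)$ rectangle is precisely where the full strength of $\gamma_2(N) \leq \gamma$ must enter. The final bound $t \geq n \cdot 2^{-O(\gamma^3)}$ then arises by balancing the $2^{-O(\gamma \log(1/\delta))}$ size loss of Phase 1 (with $\log(1/\delta) = \Theta(\gamma^2)$) against the structural extraction in Phase 2, which should itself cost only a constant factor in $n_1$ depending on $\gamma$.
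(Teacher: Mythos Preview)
Your Phase~1 is fine and matches how the paper uses \cref{lemma:sparsifying}, but Phase~2 is not a proof: you describe a greedy column-extraction and then correctly identify the obstacle that, on sparsity alone, it stalls at a rectangle of shape $\Omega(n_1)\times O(1/\delta)$, and you never show how the $\gamma_2$-bound (or the trace-norm inequality you invoke) actually upgrades this to $\Omega(n_1)\times\Omega(n_1)$. The vague claim that $\|N[A\times B']\|_{\tr}\le\gamma\sqrt{|A|\,|B'|}$ ``forbids the ones from concentrating on a small number of heavy columns'' does not by itself give a usable concentration statement strong enough to run the greedy for $\Omega(n_1)$ steps; you would need something like a uniform lower bound on the number of low-degree columns at every stage of the process, and trace-norm control of that kind is not established. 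As written, this is the entire content of the theorem left unproved.

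The paper's argument is structurally different and avoids the density route altogether in the main step. It proves a decrement lemma (\cref{lemma:main}): from any $n\times n$ Boolean $M$ with $\gamma_2(M)\le\gamma$ and $p(M)\le 1/2$, one can pass to an $m\times m$ submatrix with $m\ge n\,2^{-O(\gamma)}$, $p\le 1/2$, and $\gamma_2$-norm at most $\gamma-\Omega(1/\gamma)$. Iterating this $O(\gamma^2)$ times drives the $\gamma_2$-norm to zero (hence an all-zero submatrix) at total size cost $2^{-O(\gamma^3)}$. The decrement is achieved by fixing a factorization $M=UV$, finding a ``brilliant'' row or column---one for which $\sum_i\langle u_r,u_i\rangle^2\ge d_r$ (resp.\ for $V$)---and projecting the factorization orthogonally to that vector while deleting the columns (resp.\ rows) where the corresponding entry was~$1$; existence of a brilliant row or column follows from the identity $\|\sum u_iu_i^T-\sum v_jv_j^T\|_F^2\ge 0$. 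This projection step genuinely decreases $\|U\|_F^2$ (or $\|V\|_F^2$) by at least $d_r/\gamma$, and after enough steps a positive fraction of rows (or columns) must have squared norm at most $\gamma-\Omega(1/\gamma)$, yielding the submatrix with smaller $\gamma_2$. \cref{lemma:sparsifying} is used only mildly here, with $\delta=0.1$, to ensure each deletion step removes at most $0.1n_0$ rows/columns. This $\gamma_2$-decrement mechanism is the missing idea in your proposal.
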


Note that \cref{thm:mono_rectangle} follows from this immediately: if $|M|\leq n^2/2$, we apply \cref{thm:all_zero} to find a large all-zero submatrix. However, if $|M|\geq n^2/2$, we apply \cref{thm:all_zero} to the matrix $J-M$. By noting that $\gamma_2(J-M)\leq \gamma+1$, this guarantees a large all-ones submatrix in $M$. The key technical result underpinning the proof of \cref{thm:all_zero} is the following lemma, which tells us that we can find a large submatrix of $M$ with significantly smaller $\gamma_2$-norm. Our main theorem then follows by repeated application of this lemma.

\begin{lemma}\label{lemma:main}
    Let $M$ be an $n\times n$ non-zero Boolean matrix such that $\gamma_2(M)\leq \gamma$, and $p(M)\leq 1/2$. Then $M$ contains an $m\times m$ submatrix $N$ such that $\gamma_2(N)\leq \gamma-\Omega(1/\gamma)$, $m\geq n2^{-O(\gamma)}$, and $p(N)\leq 1/2$.
\end{lemma}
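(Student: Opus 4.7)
The plan is to combine the sparsifying lemma with a rank-one ``subtraction'' argument on a carefully chosen large submatrix.

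First, apply Lemma 4.5 with $\varepsilon = 1/2$ and a small absolute constant $\delta > 0$ (say $\delta = 1/10$) to obtain an $m_1 \times n_1$ submatrix $M_1$ of $M$ with $m_1, n_1 \geq n \cdot 2^{-O(\gamma)}$, density $p(M_1) \leq \delta < 1/2$, and all row/column sums at most $\delta$-fraction of the dimensions. This ensures $p(N) \leq 1/2$ for any further submatrix of $M_1$.

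Next, fix a factorization $M_1 = UV$ attaining $\|U\|_{\row} = \|V\|_{\col} = \sqrt{\gamma}$; write $u_i$ for the rows of $U$ and $v_j$ for the columns of $V$. The central claim is that one can find a unit vector $w$ and subsets $I$ of rows and $J$ of columns with $|I|, |J| \geq n \cdot 2^{-O(\gamma)}$ such that $\langle u_i, w \rangle^2 \geq 1/\gamma$ for every $i \in I$ and $|\langle v_j, w \rangle| \leq 1/(4\gamma^{3/2})$ for every $j \in J$. Granting this, set $u_i' := u_i - \langle u_i, w\rangle w$ for $i \in I$, so $\|u_i'\|^2 \leq \gamma - 1/\gamma$. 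Since
\[
    \langle u_i', v_j \rangle \;=\; (M_1)_{ij} - \langle u_i, w\rangle \langle v_j, w\rangle,
\]
the submatrix $N := M_1[I \times J]$ decomposes as $U'V|_J + E$, where $U'$ has rows $u_i'$ and $E = ab^T$ is rank one with $|a_i| \leq \sqrt{\gamma}$ and $|b_j| \leq 1/(4\gamma^{3/2})$, so $\gamma_2(E) \leq \|a|_I\|_\infty \|b|_J\|_\infty \leq 1/(4\gamma)$. Using subadditivity together with the elementary estimate $\sqrt{(\gamma - 1/\gamma)\gamma} = \sqrt{\gamma^2 - 1} \leq \gamma - 1/(2\gamma)$,
\[
    \gamma_2(N) \;\leq\; \|U'\|_{\row}\|V|_J\|_{\col} + \gamma_2(E) \;\leq\; \Bigl(\gamma - \tfrac{1}{2\gamma}\Bigr) + \tfrac{1}{4\gamma} \;=\; \gamma - \tfrac{1}{4\gamma},
\]
which is the required bound.

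The main obstacle is producing the triple $(w, I, J)$. The strategy is to exploit the sparsity and bounded row/column sums from Step 1: the collective Frobenius mass bound $\sum_j \|v_j\|^2 \leq n_1 \gamma$, combined with the column-sum restriction, forces the $v_j$'s to cluster in a low-effective-dimensional subspace $W \subseteq \mathbb{R}^d$; one then picks $w$ close to $W^{\perp}$, ensuring that $|\langle v_j, w\rangle|$ is tiny for a large subset $J$ of columns. Simultaneously, the nontriviality of $M_1$ together with the density bound prevents the $u_i$'s from being too aligned with the $v_j$'s in the same subspace, so a pigeonhole or top-singular-value argument inside the complementary space should yield a subset $I$ of size $\geq m_1 / 2^{O(\gamma)}$ with $\langle u_i, w\rangle^2 \geq 1/\gamma$. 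This balance — many columns nearly orthogonal to $w$ while many rows remain aligned with $w$ — is the delicate technical core of the lemma, made possible by the structural output of the sparsifying lemma. Iterating the resulting lemma $O(\gamma^2)$ times reduces $\gamma_2$ from $\gamma$ down to $1$ at the cost of a final size $n \cdot 2^{-O(\gamma^3)}$, matching the main theorem.
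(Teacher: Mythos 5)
Your opening move (sparsification via Lemma 4.5) and the closing algebra (rank-one error $E=ab^T$ with $\gamma_2(E)\le\|a\|_\infty\|b\|_\infty$, and $\sqrt{(\gamma-1/\gamma)\gamma}\le\gamma-1/(2\gamma)$) are both correct. But the heart of your argument — the existence of a unit vector $w$ together with large sets $I$ and $J$ with $\langle u_i,w\rangle^2\ge 1/\gamma$ for all $i\in I$ and $|\langle v_j,w\rangle|\le 1/(4\gamma^{3/2})$ for all $j\in J$ — is never proved, and the paragraph you offer in its place is not an argument. The average-mass bound $\sum_i\|u_i\|^2\le n_1\gamma$ only controls $w^T UU^T w$ in aggregate; it does not prevent the projection mass from concentrating on a vanishing handful of rows (e.g.\ one row with $\langle u_1,w\rangle^2=\gamma$ and all others zero), so no pigeonhole or top-singular-value argument automatically produces $|I|\ge n\,2^{-O(\gamma)}$ rows with individually large projections. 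Simultaneously demanding that the same $w$ have tiny inner product with a comparable fraction of the $v_j$ is a further unjustified constraint, and you give no mechanism to decouple the two demands. As stated, this is a genuine gap, not a detail to fill in.

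By contrast, the paper's proof sidesteps this entirely by being iterative rather than one-shot. At each step it finds a \emph{brilliant} row $r$, meaning $\sum_i\langle u_r,u_i\rangle^2\ge d_r$ (or a brilliant column), where $d_r$ is the number of ones in row $r$; Lemma 5.4 guarantees one always exists unless the matrix is all zero. It then projects every $u_i$ onto $u_r^\perp$ and keeps only the columns $j$ with $M_{r,j}=0$, so the error is \emph{exactly} zero (no rank-one correction needed), while $\|U\|_F^2$ drops by at least $d_r/\gamma$. Only after many iterations — when enough columns (or rows) have been removed — does the accumulated Frobenius decrement force a constant fraction of remaining rows to satisfy $\|u_i\|^2\le\gamma-1/(4\gamma)$, which is what finally yields a submatrix with smaller $\gamma_2$-norm. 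Choosing a row vector $u_r$ of $U$ (rather than an arbitrary $w$) is what makes the column restriction work cleanly and is the key technical idea you would be reinventing.

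A second, smaller error: you assert that after sparsifying, ``$p(N)\le 1/2$ for any further submatrix of $M_1$.'' This is false — restricting $M_1$ to a small set of columns can concentrate the ones in a single row and drive the density up arbitrarily. The paper handles this by ensuring its stopping criterion keeps $m_I,n_I\ge n_0/4$, so the bound ``each row has at most $0.1\,n_0\le n_I/2$ ones'' still applies. Since your $|I|,|J|$ are only guaranteed to be $n\,2^{-O(\gamma)}$, with no control relative to $n_1$, you would need an additional argument to recover $p(N)\le 1/2$.
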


\begin{proof}[Proof of \cref{thm:all_zero} assuming \cref{lemma:main}]
 Define a sequence of submatrices of $M$ as follows. Let $M_0=M$. If $M_i$ of size $n_i\times n_i$ is already defined such that $p(M_i)\leq 1/2$ and $M_i$ is non-zero, then let $M_{i+1}$ be the $n_{i+1}\times n_{i+1}$ submatrix of $M_i$ given by~\cref{lemma:main}, so that $\gamma_2(M_{i+1})<\gamma_2(M_{i+1})- c / \gamma_2(M_{i+1})$ for some constant $c > 0$, $p(M_{i+1})\leq 1/2$ and $n _{i+1}=n_i2^{-O(\gamma)}$. By induction, we have $\gamma(M_{i})\leq \gamma- c i /\gamma$ and $n_{i}=n2^{-O(\gamma i)}$. On the other hand, we trivially have $\gamma_2(M_i) \geq 0$ and so if $i > \gamma^2 / c$, then $M_i$ must not be defined. Since $M_{i+1}$ is only not defined in case $M_{i}$ is the all-zero matrix, if we let $i$ be the smallest integer such that $M_i$ is the all-zero matrix, then we have established that $i \leq \lceil \gamma^2 / c \rceil$ and so $M_i$ is the desired $n_i \times n_i$ matrix with $n_i = n 2^{-O(i \gamma)} = n2^{-O(\gamma^3)}$.
\end{proof}

The rest of this section is devoted to the proof of \cref{lemma:main}. The following definition and accompanying result are crucial.

\begin{definition}
    Let $M$ be an $m\times n$ Boolean matrix with factorization $M=UV$, and let $u_1,\dots,u_m$ be the row vectors of $U$, and $v_1,\dots,v_n$ be the column vectors of $V$. For $r\in [m]$, say that row $r$ is \emph{brilliant} (with respect to $(M,U,V)$) if $u_r\neq 0$, and denoting by $d_r$ the number of 1 entries in the $r$-th row of $M$, we have
    $$\sum_{i=1}^{m}\langle u_r,u_i\rangle^2\geq d_r.$$
    Similarly, for $c\in [n]$, say that column $c$ is  \emph{brilliant} (with respect to $(M,U,V)$) if $v_c\neq 0$, and denoting by $d'_c$ the number of 1 entries in the $c$-th column of $M$, we have
    $$\sum_{i=1}^{n}\langle v_c,v_i\rangle^2\geq d'_c.$$
\end{definition}

\begin{lemma}\label{lemma:brilliant}
Let $M$ be a non-zero Boolean matrix with factorization $M=UV$. Then there exists either a brilliant row or a brilliant column.
\end{lemma}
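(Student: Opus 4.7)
The plan is a clean contradiction via the Frobenius inner product and Cauchy--Schwarz. Suppose toward contradiction that no row and no column of $M$ is brilliant, and set $A := U^T U$ and $B := V V^T$ (both symmetric). The first step is to translate the non-brilliance hypothesis into matrix-norm inequalities. Using the cyclic property of trace one has
\begin{equation*}
\|UU^T\|_F^2 = \tr((UU^T)^2) = \tr((U^TU)^2) = \|A\|_F^2,
\end{equation*}
and since the $(r,i)$-entry of $UU^T$ is $\langle u_r, u_i\rangle$, this gives $\|A\|_F^2 = \sum_{r,i}\langle u_r, u_i\rangle^2$; analogously $\|B\|_F^2 = \sum_{c,j}\langle v_c, v_j\rangle^2$. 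The non-brilliance assumption then reads $\sum_i \langle u_r, u_i\rangle^2 < d_r$ for every $r$ with $u_r \ne 0$, while rows with $u_r = 0$ contribute $0 = d_r$ on both sides. Since $M \ne 0$ forces at least one $u_r \ne 0$ for which the strict inequality holds, summing over $r$ (and similarly over $c$) yields
\begin{equation*}
\|A\|_F^2 \;<\; \sum_r d_r \;=\; |M|, \qquad \|B\|_F^2 \;<\; |M|.
\end{equation*}

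To finish, I would combine these with the Frobenius Cauchy--Schwarz inequality. Since $M$ is Boolean, $|M| = \|M\|_F^2 = \|UV\|_F^2$, and by cyclicity of trace together with the symmetry of $B$,
\begin{equation*}
\|UV\|_F^2 = \tr(V^T U^T U V) = \tr(AB) = \langle A, B\rangle_F.
\end{equation*}
Cauchy--Schwarz then gives $|M| = \langle A, B\rangle_F \le \|A\|_F\,\|B\|_F < \sqrt{|M|}\cdot\sqrt{|M|} = |M|$, a contradiction, so some row or column must be brilliant. The only subtlety, and essentially the only obstacle in the argument, is the bookkeeping around zero rows of $U$ and zero columns of $V$: the definition of brilliance requires $u_r \ne 0$ (resp. $v_c \ne 0$), so the strict inequality applies only to non-zero rows. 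This is harmless because $u_r = 0$ forces the entire $r$-th row of $M$ to vanish, hence $d_r = 0$, so such rows match trivially on both sides; combined with $M \ne 0$, this preserves strict inequality on at least one summand and closes the argument.
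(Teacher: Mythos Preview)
Your proof is correct and essentially the same as the paper's: both hinge on the identities $\|U^TU\|_F^2=\sum_{r,i}\langle u_r,u_i\rangle^2$, $\|VV^T\|_F^2=\sum_{c,j}\langle v_c,v_j\rangle^2$, and $\langle U^TU,\,VV^T\rangle=|M|$, then derive a contradiction from the non-brilliance inequalities. The only cosmetic difference is that the paper expands $\|U^TU-VV^T\|_F^2\ge 0$ (the AM--GM form $2\langle A,B\rangle\le\|A\|_F^2+\|B\|_F^2$) while you invoke Cauchy--Schwarz $\langle A,B\rangle\le\|A\|_F\|B\|_F$; your explicit handling of the $u_r=0$ case is a nice touch that the paper glosses over.
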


\begin{proof}
Assume that none of the rows or columns are brilliant. We make use of the following identity: if $x,y$ are real vectors of the same dimension, then $\langle xx^T,yy^T\rangle=\langle x,y\rangle^2$.

Let $u_1,\dots,u_m$ be the rows of $U$, and $v_1,\dots,v_n$ be the columns of $V$. We have
\begin{equation}\label{eq:brilliant}
    0\leq || \sum_{i=1}^m u_iu_i^T-\sum_{j=1}^n v_jv_j^T||_F^2=\sum_{r=1}^{m}\sum_{i=1}^m \langle u_r,u_i\rangle^2+\sum_{c=1}^{n}\sum_{j=1}^n \langle v_c,v_j\rangle^2-2\sum_{i=1}^{m}\sum_{j=1}^n\langle u_i,v_j\rangle^2.
\end{equation}
Here, $\sum_{i=1}^{m}\sum_{j=1}^n\langle u_i,v_j\rangle^2=|M|$ is the number of one entries of $M$. If none of the rows are brilliant, then $$\sum_{r=1}^{m}\sum_{i=1}^m \langle u_r,u_i\rangle^2<\sum_{r=1}^m d_r=|M|,$$
and similarly, if  none of the columns are brilliant, then 
$$\sum_{c=1}^{n}\sum_{j=1}^n \langle v_c,v_i\rangle^2<\sum_{c=1}^n d'_c=|M|.$$
This contradicts (\ref{eq:brilliant}), so we must have either a brilliant row or a brilliant column.
\end{proof}

\begin{proof}[Proof of \cref{lemma:main}]
Let $M_0$ be an $n_0\times n_0$ submatrix of $M$ such that $n_0=n2^{-O(\gamma)}$, and each row and each column of $M_0$ has at most $0.1n_0$ one entries. Applying \cref{lemma:sparsifying} with $\varepsilon=1/2$ and $\delta=0.1$, we can find such a submatrix. Write $M_0=U_0V_0$, where $||U_0||_{\row}^2\leq \gamma$ and $||V_0||_{\col}^2\leq \gamma$.

Repeat the following procedure. If $M_i=U_iV_i$ is already defined, where the size of $M_i$ is $m_i\times n_i$, proceed as follows. Stop if either (a) $M_i$ is an all-zero matrix, or (b) $m_i<n_0/2$, or (c) $n_i<n_0/2$. Otherwise, proceed depending on the following two cases.

\bigskip

\noindent
\textbf{Case 1.} There exists a brilliant row $r$ with respect to $(M_i,U_i,V_i)$.

Let $u_1,\dots,u_{m_i}$ be the rows of $U_i$. Let $u_j'$ be the projection of $u_j$ to the orthogonal complement of $u_r$, that is, 
$$u_j'=u_j-\frac{\langle u_r,u_j\rangle}{||u_r||^2}u_r.$$
Let $U_{i+1}$ be the matrix, whose rows are $u_1',\dots,u_{m_i}'$. Furthermore, let $V_{i+1}$ be the submatrix of $V_i$, in which we keep those columns $v$ of $V_i$  that satisfy $\langle u_r,v\rangle=0$. In other words, we keep the $j$-th column of $V_i$ if $(M_i)_{r,j}=0$. Finally, set $M_{i+1}=U_{i+1}V_{i+1}$. We observe that $M_{i+1}$ is a submatrix of $M_{i}$. This follows from the fact that if $\langle u_r,v\rangle=0$, then $\langle u_j',v\rangle=\langle u_j,v\rangle$ for all $j\in [m_i]$. Moreover, the size of $M_{i+1}$ is $m_i\times (n_{i}-t_i)$, where $t_i$ is the number of one entries in row $r$ of $M_i$. Furthermore, 
$$||u_j'||^2=||u_j-\frac{\langle u_r,u_j\rangle}{||u_r||^2}u_r||^2=||u_j||^2-\frac{\langle u_r,u_j\rangle^2}{||u_r||^2}\leq ||u_j||^2-\frac{\langle u_r,u_j\rangle^2}{\gamma},$$
from which
$$||U_{i+1}||_F^2=\sum_{j=1}^{m_i}||u_j'||^2\leq \sum_{j=1}^{m_i}||u_j||^2-\frac{1}{\gamma}\sum_{j=1}^{m_i}\langle u_r,u_j\rangle^2\leq ||U_{i}||_F^2-\frac{t_i}{\gamma},$$
where the last inequality follows by our assumption that row $r$ is brilliant.

\medskip

\noindent
\textbf{Case 2.} There exists no brilliant row  with respect to $(M_i,U_i,V_i)$. 

Then by \cref{lemma:brilliant}, there exists a brilliant column $c$. We proceed similarly as in the previous case, but with the roles of $U$ and $V$ swapped. Let $v_1,\dots,v_{n_i}$ be the columns of $V_i$, and let $v_j'$ be the projection of $v_j$ to the orthogonal complement of $v_c$. Let $V_{i+1}$ be the matrix, whose columns are $v_1',\dots,v_{n_i}'$. Furthermore, let $U_{i+1}$ be the submatrix of $U_i$, in which we keep those rows $u$ of $U_i$ such that $\langle u,v_c\rangle=0$. Finally, set $M_{i+1}=U_{i+1}V_{i+1}$.  We observe again that $M_{i+1}$ is a submatrix of $M_{i}$, and the size of $M_{i+1}$ is $(m_i-t_i)\times n_{i}$, where $t_i$ is the number of one entries in column $c$. Finally, $$||V_{i+1}||_F^2\leq ||V_{i}||_F^2-\frac{t_i}{\gamma}.$$

\medskip

We make some observations about our process.
\begin{enumerate}
    \item If at step $i$ we are in Case 1, then we get $M_{i+1}$ from $M_i$ by removing columns, and if we are in Case 2, we only remove rows.
    \item If for any $i$ we invoked Case 2, that is, there is no brilliant row at some step $i$, then there are no brilliant rows at any later step either. Indeed, in Case 2., we are only removing rows, so a non-brilliant row cannot become brilliant.
    \item The process ends in a finite number of steps. Indeed, at each step, we turn a non-zero vector of $U_i$ or $V_i$ into a zero vector by projection.
\end{enumerate}  

Let $i=I$ be the index for which we stopped, and recall that this means that either (a) $M_I$ is the all-zero matrix, or (b) $n_I<n_0/2$ or (c) $m_I<n_0/2$. Note that at each step $i$, the number of rows or columns decreases by some $t$ (possibly $t=0$), where $t$ is upper bounded by the number of one entries in a row or a column of $M_0$, in particular $t\leq 0.1n_0$. Hence, we must have that $m_I>n_0/4$ and $n_I>n_0/4$. If (a) $M_I$ is the all-zero matrix, we are done by taking $N$ to be any $n_0/4\times n_0/4$ submatrix of $M_I$, so assume that either (b) or (c) happened.

First, assume that (b) happened, so $n_I<n_0/2$. By (1) and (2), this is only possible if at every step, we invoked Case 1. Therefore, we have $m_I=n_0$, and $n_I=n_0-\sum_{i=0}^{I-1}t_i$, which implies $\sum_{i=0}^{I-1}t_i\geq n_0/2$. On the other hand, we have 
$$||U_{I}||_F^2\leq ||U_0||_F^2-\sum_{i=0}^{I-1}\frac{t_i}{\gamma}<||U_0||_F^2-\frac{n_0}{2\gamma}\leq n_0\gamma-\frac{n_0}{2\gamma}.$$
Let $U'$ be the submatrix of $U_I$ in which we keep those rows $u$ that satisfy $||u||^2\leq \gamma-\frac{1}{4\gamma}$, and let $m'$ be the number of rows of $U'$. Then 
$$n_0\left(\gamma-\frac{1}{2\gamma}\right)\geq ||U_I||_F^2\geq (n_0-m')\left(\gamma-\frac{1}{4\gamma}\right),$$
from which we conclude that $m'\geq n_0/(4\gamma^2)$. Set $N'=U'V_I$, then $N'$ is an $m'\times n_I$ submatrix of $M$ such that 
$$\gamma_2(N')\leq ||U'||_{\row}||V_I||_{\col}\leq \sqrt{\gamma-\frac{1}{4\gamma}}\cdot\sqrt{\gamma}=\gamma-\Omega(1/\gamma),$$ and $m',n_I=\Omega_{\gamma}(n)$. As $n_I\geq n_0/4$, each row of $N'$ has at most $0.1n_0\leq n_I/2$ one entries. Hence, $p(N')\leq 1/2$. Set $m=\min\{m',n_I\}\geq n_0/(4\gamma^2)=n2^{-O(\gamma)}$, then by a simple averaging argument, we can find an $m\times m$ submatrix $N$ of $N'$ such that $p(N)\leq 1/2$. This $N$ suffices.

Finally, we consider the case if (c) happened, that is, if $m_I<n_0/2$. In this case we proceed very similarly as in the previous case, so we only give a brief outline of the argument. Let $J$ be the first index for which  we invoked Case 2 in the process. Using (2), it follows that we invoked Case 2 for every index $i>J$ as well. We have $n_I=n_J\geq n_0/2$ and $m_I=n_0-\sum_{i=J}^{I-1}t_i$. As before, we conclude that
$$||V_{I}||_F^2\leq ||V_J||_F^2-\sum_{i=J}^{I-1}\frac{t_i}{\gamma}<||V_J||_F^2-\frac{n_0}{2\gamma}\leq n_J\gamma-\frac{n_J}{2\gamma}.$$
From this, we deduce that if $V'$ is the submatrix of $V_I$ formed by those columns $v$ that satisfy $||v||^2\leq \gamma-\frac{1}{4\gamma}$, then $V_J$ has at least $n_J/(4\gamma^2)\geq n_0/(8\gamma^2)$ columns. Setting $N'=U_IV'$, the rest of the proof proceeds in the same manner as in case (b).
\end{proof}

Finally, we prove \cref{cor:integer}.

\begin{proof}[Proof of \cref{cor:integer}]
Observe that if $M$ is an integer matrix such that $\gamma_2(M)\leq \gamma$, then all entries of $M$ are in the set $S=\{-\lfloor\gamma\rfloor,\dots,\lfloor\gamma\rfloor\}$. Hence, there exists some integer $t\in S$ such that at least $mn/(2\gamma+1)$ entries of $M$ are equal to $t$. Let $q\in \mathbb{R}[X]$ be the polynomial defined as $q(x)=1+c\prod_{k\in S\setminus\{t\}}(x-k)$, where $c\neq 0$ is chosen such that $q(t)=0$. Then $q(k)=1$ for every $k\in S\setminus\{t\}$, and $q(t)=0$. Let $N$ be the $m\times n$ Boolean matrix defined as $N_{i,j}=q(M_{i,j})$. Then writing $q(x)=\sum_{i=0}^{|S|-1}a_ix^i$, we have $N=\sum_{i=0}^{|S|-1}a_i M^{\circ i}$, where $M^{\circ i}$ denotes the Hadamard product $M\circ\dots\circ M$ with $i$ terms. Hence, by property (\ref{pr:hadamard}) of the $\gamma_2$-norm, we have 
$$\gamma':=\gamma_2(N)\leq \sum_{i=0}^{|S|-1}|a_i|\gamma^i=\gamma^{O(\gamma)}.$$
Here, $p(N)\leq 1-1/(2\gamma+1)$, so we can apply  \cref{lemma:sparsifying} to find a submatrix an $m'\times n'$ submatrix $N'$ such that $p(N')\leq 1/2$ and $\frac{m'}{m},\frac{n'}{n}\geq 2^{-O(\gamma^2 \gamma')}=2^{-\gamma^{O(\gamma)}}$. Finally, applying \cref{lemma:main}, we get that $N'$ contains an all-zero submatrix $N''$ of size $m''\times n''$ with $\frac{m''}{m'},\frac{n''}{n'}\geq 2^{-O(\gamma'^3)}=2^{-\gamma^{O(\gamma)}}$. But then $N''$ corresponds to a submatrix of $M$ of size $m2^{-\gamma^{O(\gamma)}}\times n2^{-\gamma^{O(\gamma)}}$ in which every entry is $t$, finishing the proof.
\end{proof}

\section{Construction of tight example}\label{sect:construction}

In this section, we present a construction of matrices with bounded $\gamma_2$-norm and no large all-ones and all-zeros submatrices.

\begin{proof}[Proof of \cref{prop:construction}]
 Let $\ell=\lfloor\gamma\rfloor\geq 3$, and let $k$ be an integer sufficiently large with respect to $\ell$. Let $S$ be a $k$ element ground set, let $p=k^{3/2-\ell}$, and let $\mathcal{F}$ be a random sample of the $\ell$-element subsets of $S$, where each $\ell$-element set is included independently with probability $p$. If we let $X=|\mathcal{F}|$, then $\mathbb{E}(X)=p\binom{k}{\ell}=\Omega_{\ell}(k^{3/2})$ and by standard concentration arguments, $\mathbb{P}(X>\mathbb{E}(X)/2)>0.9$. Let $Y$ be the number of pairs of sets in $\mathcal{F}$ whose intersection has size at least two. Then $\mathbb{E}(Y)<p^2k^{2\ell-2}=k$, so by Markov's inequality, $\mathbb{P}(Y<10k)\geq 0.9$. Furthermore, let $Y'$ be the number of pairs of sets in $\mathcal{F}$ whose intersection has size exactly 1. Then $\mathbb{E}(Y')\leq p^2 k^{2\ell-1}=k^2$, so by Markov's inequality, $\mathbb{P}(Y'<10k^2)\geq 0.9$.  Finally, let $T\subset S$ be any set of size $k/2$ and let $Z_T$ be the number of elements of $\mathcal{F}$ completely contained in $T$. Then $\mathbb{E}(Z_T)=p\binom{k/2}{\ell}<2^{-\ell}\mathbb{E}(X)$. By the multiplicative Chernoff inequality, we can write
$$\mathbb{P}(Z_T\geq 2\mathbb{E}(Z_T))\leq \exp\left(-\frac{1}{3}\mathbb{E}(Z_T)\right)\leq \exp(-\Omega_{\ell}(k^{3/2})).$$
Hence, as the number of $k/2$ element subsets of $S$ is at most $2^k$, a simple application of the union bound  implies $$\mathbb{P}(\forall T\subset S, |T|=k/2: Z_T\leq 2\mathbb{E}(Z_T))>0.9.$$
In conclusion, there exists a choice for $\mathcal{F}$ such that $X>\mathbb{E}(X)/2$, $Y<10k$, $Y'<10k^2$, and $Z_T\leq 2\mathbb{E}(Z_T)\leq 2\cdot 2^{-\ell}\mathbb{E}(X)$ for every $k/2$ element set $T$. For each  pair of sets intersecting in more than one element in $\mathcal{F}$, remove one of them from $\mathcal{F}$ and let $\mathcal{F}'$ be the resulting set. If we let $n=|\mathcal{F}'|/2=\Omega_{\ell}(k^{3/2})$, then $n>X/2-5k\geq \mathbb{E}(X)/4-5k =\Omega_{\ell}(k^{3/2})$, and thus $Z_T\leq 8\cdot 2^{-\ell}n$ for every $T$.

Define the $n\times n$ matrix $M$ as follows. Let $\mathcal{A}\cup\mathcal{B}$ be an arbitrary partition of $\mathcal{F}'$ into two $n$ element sets. Let $U$ be the $n\times k$ matrix whose rows are the characteristic vectors of the elements of $\mathcal{A}$, let $V$ be the $k\times n$ matrix whose columns are the characteristic vectors of the elements of $\mathcal{B}$, and set $M=UV$. As each row of $U$ and each column of $V$ is a zero-one vector with $\ell$ one entries, we have $\gamma_2(M)\leq ||U||_{\row}||V||_{\col}=\ell$. Also, $M$ is a Boolean matrix, which is guaranteed by the fact that any two distinct sets in $\mathcal{F}'$ intersect in 0 or 1 elements. The number of one entries of $M$ is at most $Y'<10k^2=o(n^2)$, which shows that $M$ contains no $n2^{-\gamma}\times n2^{-\gamma}$ sized all-ones submatrix if $n$ is sufficiently large. Finally, $M$ contains no $t\times t$ all-zeros submatrix if $t>8\cdot 2^{-\ell}n$. Indeed, a $t\times t$ all-zeros submatrix corresponds to subfamilies $\mathcal{A}'\subset \mathcal{A}$ and $\mathcal{B}'\subset \mathcal{B}$ of sizes $t$ such that every element of $\mathcal{A}'$ is disjoint from every element of $\mathcal{B}'$. In other words, if $T_1=\bigcup_{A\in\mathcal{A}'}A$  and $T_2=\bigcup_{B\in\mathcal{B}'} B$, then $T_1$ and $T_2$ are disjoint. But then at least one of $T_1$ or $T_2$ has size at most $k/2$, without loss of generality, $|T_1|\leq k/2$. The number of elements of $\mathcal{F}'$ contained in $T_1$ is at most $8\cdot 2^{-\ell}n$, so we indeed have $t\leq 8\cdot 2^{-\ell}n$.

\end{proof}

\section{Four cycle-free matrices}\label{sect:four_cycle}
In this section, we prove \cref{thm:four_cycle}. Most of this section is devoted to proving that four cycle-free matrices of average degree $d$ have $\gamma_2$-norm at least $\Omega(\sqrt{d})$. From this, \cref{thm:four_cycle} follows after a bit of work.

We adapt certain graph terminology for Boolean matrices. By the average degree of a matrix, we mean the average degree of the bipartite graph whose bi-adjacency matrix is $M$. Let $M$ be a matrix of average degree at least $d$. The first step is to find a submatrix of average degree $\Omega(d)$ where either every row contains $\Theta(d')$ one entries, or every column contains $\Theta(d')$ entries for some $d'=\Omega(d)$. Unfortunately, it is a well known result of graph theory \cite{JS} that it is not always possible to find a submatrix in which this is true for both the rows and columns simultaneously, which would also make our proof significantly simpler.

\begin{lemma}\label{lemma:regularize}
Let $M$ be a Boolean matrix of average degree at least $d$. Then $M$ contains a submatrix $N$ of average degree $d'\geq d/3$ such that every row and column of $N$ contains at least $d'/2$ one entries, and either $||N||_{\row}^2\leq 6d'$ or $||N||_{\col}^2\leq 6d'$.
\end{lemma}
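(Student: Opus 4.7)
The plan is to apply the densest-subgraph reduction twice, with a heavy-column deletion step sandwiched in between. Throughout, write $s_c=\sum_r M_{r,c}$ for the column sum and $s_r=\sum_c M_{r,c}$ for the row sum.

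\textbf{Stage 1} (first densest subgraph). Let $M_1$ be a submatrix of $M$ maximizing the bipartite density $|M'|/(m'+n')$, and write $d_1$ for its average degree. Since $M$ itself is a candidate, $d_1\ge d$. A standard maximality argument shows that if a row or column had sum strictly less than $d_1/2$, removing it would strictly increase the density, contradicting maximality; hence every row and column of $M_1$ has sum at least $d_1/2$. The same argument applied to an arbitrary column subset $S\subseteq[n_1]$ (delete the complementary columns) yields the key inequality
\[
\sum_{c\in S} s_c \;\le\; \frac{d_1}{2}\bigl(m_1+|S|\bigr).
\]

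\textbf{Stage 2} (delete heavy columns). If $\|M_1\|_{\mathrm{row}}^2\le 6d_1$ or $\|M_1\|_{\mathrm{col}}^2\le 6d_1$, take $N:=M_1$. Otherwise, by symmetry assume $m_1\le n_1$ (the case $m_1>n_1$ is handled analogously by deleting heavy rows). Let $B=\{c:s_c>2d_1\}$. Plugging $S=B$ into the key inequality and using $s_c>2d_1$ on $B$ gives $2d_1|B|<(d_1/2)(m_1+|B|)$, so $|B|<m_1/3$ and $\sum_{c\in B}s_c\le (d_1/2)(m_1+m_1/3)=2d_1m_1/3$. Let $M_2$ be $M_1$ with the columns in $B$ deleted. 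Using $m_1\le n_1$,
\[
|M_2| \;\ge\; \frac{d_1(m_1+n_1)}{2}-\frac{2d_1 m_1}{3} \;=\; \frac{d_1(3n_1-m_1)}{6} \;\ge\; \frac{d_1 n_1}{3} \;\ge\; \frac{|M_1|}{3},
\]
so $d(M_2)\ge 2|M_2|/(m_1+n_1)\ge d_1/3$, while by construction every column of $M_2$ has sum at most $2d_1$.

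\textbf{Stage 3} (re-densify). Let $N$ be a densest subgraph of $M_2$. By the argument of Stage 1 applied to $M_2$, we have $d(N)\ge d(M_2)\ge d_1/3\ge d/3$ and every row and column of $N$ has sum at least $d(N)/2$. Column sums of $N$ are bounded by those of $M_2$, so $\|N\|_{\mathrm{col}}^2\le 2d_1\le 6d(N)$, the last inequality using $d(N)\ge d_1/3$. Thus $N$ satisfies all three conclusions of the lemma.

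The main subtlety is that deleting heavy columns in Stage 2 can destroy the minimum-degree property obtained in Stage 1, since each remaining row loses up to $|B|$ entries and $|B|$ may be much larger than $d_1/2$. Stage 3 circumvents this by re-densifying: the upper bound on column sums is inherited by every sub-submatrix of $M_2$, while the densest-subgraph reduction automatically restores the minimum-degree condition, at the cost of only the constant factors already absorbed into the $d/3$ and $6d'$ bounds of the lemma.
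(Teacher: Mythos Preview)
Your proof is correct and follows essentially the same three-stage strategy as the paper: pass to a densest subgraph, delete the high-degree vertices from the larger side, then pass to a densest subgraph again. The only cosmetic difference is that you bound the lost edges via the ``key inequality'' $\sum_{c\in S}s_c\le (d_1/2)(m_1+|S|)$ coming directly from maximality, whereas the paper bounds $|C|$ by edge-counting and then uses that the surviving rows retain degree $\ge d_0/2$; the arithmetic and the resulting constants match.
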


\begin{proof}
Let $G$ be the bipartite graph, whose bi-adjacency matrix is $M$, and let $A$ and $B$ be the vertex classes of $G$. Our task is to show that $G$ contains an induced subgraph $G'$ of average degree  $d'\geq d/3$ such that $G'$ has minimum degree at least $d'/2$, and every degree in one of the parts is at most $6d'$. 

Let $G_0$ be an induced subgraph of $G$ of maximum average degree and let $d_0$ be the average degree of $G_0$, so that $d_0\geq d$. First, we note that $G_0$ has no vertex of degree less than $d_0/2$. Indeed, otherwise, if $v\in V(G_0)$ is such a vertex, then the average degree of $G_0-v$ (i.e., the graph we get by removing $v$) has average degree $2e(G_0-v)/(v(G_0)-1)>(2e(G_0)-d_0)/(v(G_0)-1)=d_0$.

Let $A_0\subset A,B_0\subset B$ be the vertex classes of $G_0$, and assume without loss of generality that $|A_0|\geq |B_0|$. Note that the number of edges of $G_0$ is $\frac{d_0}{2}(|A_0|+|B_0|)$. If we let $C\subset A_0$ be the set of vertices of degree more than $2d_0$, then $|C|\leq |A_0|/2$. Indeed, otherwise, the number of edges of $G_0$ is at least $2d_0|C|>d_0|A_0|\geq\frac{d_0}{2}(|A_0|+|B_0|)$, a contradiction. Let $A_1=A_0\setminus C$, and let $G_1$ be the subgraph of $G_0$ induced on $A_1\cup B_0$. The number of edges of $G_1$ is at least $d_0|A_1|/2\geq d_0 (|A_1|+|B_0|)/6$, so the average degree of $G_1$ is at least $d_0/3$. Let $G'$ be an induced subgraph of $G_1$ of maximum average degree, and let $d'$ be the average degree of $G'$. Then $d'\geq d_0/3$, and every vertex of $G'$ has degree at least $d'/2\geq d_0/6\geq d/6$. Furthermore, if $A'\subset A_1$ and $B'\subset B_0$ are the vertex classes of $G'$, then every degree in $A'$ is at most $2d_0\leq 6d'$. This finishes the proof.
\end{proof}

Now the idea of the proof is as follows. After passing to a submatrix $N$ which is close to regular from one side, say all columns have $\Theta(d')$ one entries, we use the fact that $$\gamma_2(N)\geq||N\circ (u v^T)||_{\tr}$$
for any choice of unit vectors $u$ and $v$ (of the appropriate dimensions), see property (\ref{pr:gamma}) of the $\gamma_2$-norm. We choose $u$ to be a vector whose entries are based on the degree distribution of the rows, and choose $v$ to be the normalized all-ones vector. Letting $A=N\circ (u v^T)$, we then inspect the matrices $B=AA^T$ and $B^2$. With the help of the Cauchy interlacing theorem, we show that the singular values of $A$ follow a certain distribution and thus find a lower bound for $||A||_{\tr}$.

\begin{lemma}\label{lemma:C4-free}
Let $M$ be a four cycle-free Boolean matrix of average degree at least $d$. Then $$\gamma_2(M)=\Omega(\sqrt{d}).$$
\end{lemma}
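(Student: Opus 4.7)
My plan is to apply \cref{lemma:regularize} to pass to a well-structured submatrix $N$ of $M$, and then lower bound $\gamma_2(N)$ via property~(\ref{pr:gamma}) of the $\gamma_2$-norm: for any unit vectors $u \in \mathbb{R}^r$, $v \in \mathbb{R}^s$, one has $\gamma_2(N) \geq \|N \circ (uv^T)\|_{\tr}$. By monotonicity of $\gamma_2$, it suffices to prove $\gamma_2(N) = \Omega(\sqrt{d'})$, where $d' \geq d/3$ is the average degree of $N$.

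Apply \cref{lemma:regularize}; after transposing $N$ if necessary (which preserves both $\gamma_2$ and four-cycle-freeness), I may assume every row of $N$ has between $d'/2$ and $6d'$ ones and every column has at least $d'/2$ ones. Writing $N$ as $r \times s$ and combining these degree bounds with the four-cycle-free inequality $\sum_k \binom{\deg_{\col}(k)}{2} \leq \binom{r}{2}$ (which also implies $|N|^2 \leq s (r^2 + |N|)$ by Cauchy--Schwarz), a K\H{o}v\'ari--S\'os--Tur\'an-style computation yields the geometric constraints $(d')^2 \lesssim s \lesssim r$, which will be useful throughout.

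Next, take $v = \frac{1}{\sqrt{s}}\mathbf{1}$ and choose $u \in \mathbb{R}^r$ a unit vector weighted by the row-degree distribution---natural candidates being uniform $u_i = 1/\sqrt{r}$ or $u_i \propto \sqrt{\deg_{\row}(i)}$. Set $A = N \circ (uv^T)$ and $B = AA^T$. The diagonal is $B_{ii} = u_i^2 \deg_{\row}(i)/s$, and four-cycle-freeness implies that any two distinct rows of $N$ share at most one column where both entries are $1$, so $B_{ij} \in \{0, u_i u_j/s\}$ for $i \neq j$. This explicit description makes $\tr(B) = \sum_i B_{ii}$ and $\tr(B^2)$ computable in terms of degree moments, and the four-cycle-free bound $\sum_k \deg_{\col}(k)^2 \leq r^2 + |N|$ gives a favorable estimate on $\tr(B^2)$.

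Finally, the Cauchy--Schwarz inequality on the singular values $\sigma_k$ of $A$ yields
$$\|A\|_{\tr} = \sum_k \sigma_k \geq \frac{\bigl(\sum_k \sigma_k^2\bigr)^{3/2}}{\bigl(\sum_k \sigma_k^4\bigr)^{1/2}} = \frac{\tr(B)^{3/2}}{\tr(B^2)^{1/2}},$$
reducing matters to the two moment bounds from the previous step. In the extremal regime $s \asymp (d')^2$ (the tight case for K\H{o}v\'ari--S\'os--Tur\'an), a direct calculation already delivers $\|A\|_{\tr} = \Omega(\sqrt{d'})$. The main obstacle is the asymmetric regime where $s$ and $r$ are significantly larger than $(d')^2$: here the naive moment bound loses a polynomial factor in $s/(d')^2$. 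Following the sketch in the excerpt, I expect the Cauchy interlacing theorem to play its role at this point, producing a principal submatrix of $B$ whose eigenvalue distribution certifies that many singular values of $A$ are of order $\sqrt{d'/(rs)}$, thereby recovering $\|A\|_{\tr} = \Omega(\sqrt{d'})$ in general.
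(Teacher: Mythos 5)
Your framing is right as far as it goes: regularize via \cref{lemma:regularize}, lower bound $\gamma_2(N)$ by $\|N\circ(uv^T)\|_{\tr}$ via property~(\ref{pr:gamma}), and exploit four-cycle-freeness to make $B=AA^T$ explicit. The Cauchy--Schwarz bound $\|A\|_{\tr}\geq \tr(B)^{3/2}/\tr(B^2)^{1/2}$ is a clean observation and, as you correctly note, delivers $\Omega(\sqrt{d'})$ in the balanced regime but not in general. However, the proposal stops exactly where the real work begins. The missing idea is a \emph{multi-scale} decomposition: the paper partitions the rows (the side whose degrees are unbounded) into $O(\log n)$ degree classes $I_t=\{i: 3^{t-1}\leq d_i\leq 3^t\}$, runs a first-vs-second moment argument on each principal submatrix $B[I_t\times I_t]$ separately to extract $\Omega(|I_t|)$ eigenvalues at level $\Omega(3^{2t}/(fn))$, lifts these to eigenvalues of $B$ by Cauchy interlacing, and then sums the resulting singular-value contributions over all scales $t$. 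A single global moment estimate (even assisted by interlacing to a single well-chosen principal submatrix) loses a factor governed by the ratio $d_{\max}/d_{\mathrm{typical}}$, which is unbounded; the per-class argument is what removes it. ``I expect Cauchy interlacing to produce a principal submatrix whose eigenvalue distribution certifies many singular values of order $\sqrt{d'/(rs)}$'' names the tool but not the construction, and the single scale $\sqrt{d'/(rs)}$ you mention is not the right picture — the singular values live at $\Theta(\log)$ different levels.

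There is also a subtler misstep in the setup that would block the multi-scale argument even if you had it in mind. You transpose so that \emph{rows} are bounded in $[d'/2,6d']$, but then build $B=AA^T$, which is indexed by rows. With rows nearly regular, a dyadic partition of the rows is trivial (everything sits in one class), so the decomposition gives nothing; meanwhile the column degrees — the actual wild quantity after your transposition — are what control $e(H[I_t])$, and you have no pointwise upper bound on them (the aggregate $C_4$-free bound $\sum_k\deg_{\col}(k)^2\leq r^2+|N|$ is too weak for the per-vertex count $|\{i': i\sim i'\}|\lesssim d\cdot d_i$ that the second-moment step needs). You should either not transpose (keep columns bounded by $6d'$, let row degrees vary, partition rows and use $B=AA^T$ as the paper does), or transpose and then work with $A^TA$ indexed by columns. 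The pairing ``rows bounded $+$ $B=AA^T$ $+$ partition rows'' is the one combination that doesn't work.
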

\begin{proof}
Let $N$ be a submatrix of $M$ satisfying the outcome of \cref{lemma:regularize}. Then if we let $d_0$ be the average degree of $N$, we have $d_0\geq d/3$, every row and column of $N$ contains at least $d_0/2$ one entries, and $||N||_{\row}^2\leq 6d_0$ or $||N||_{\col}^2\leq 6d_0$. Without loss of generality, we assume that $||N||_{\col}^2\leq 6d_0$. In what follows, we only work with the matrix $N$, and our goal is to prove that $\gamma_2(N)=\Omega(\sqrt{d_0})$, which will then imply that $\gamma_2(M)=\Omega(\sqrt{d})$. To simplify notation, we write $d$ instead of $d_0$.

Let the size of $N$ be $m\times n$, and recall that for any choice of $u\in \mathbb{R}^m$ and $v\in \mathbb{R}^n$ with $||u||=||v||=1$, we have
$$\gamma_2(N)\geq ||M\circ (uv^T)||_{\tr}.$$
Let $f$ be the number of one entries of $N$, and for $i=1,\dots,m$, let $d_i$ be the number of one entries of row $i$. Note that $f=d_1+\dots+d_m$. Let $u\in \mathbb{R}^m$ be defined as $u(i)=\sqrt{d_i/f}$ for $i\in [m]$, and let $v\in \mathbb{R}^n$ be defined as $v(i)=1/\sqrt{n}$. Then we have $||u||=||v||=1$. If we let $A=N\circ (uv^T)$, then $\gamma_2(N)\geq ||A||_{\tr}$, so it is enough to prove that $||A||_{\tr}=\Omega(\sqrt{d})$. Note that if we let $\sigma_1\geq \dots\geq \sigma_m\geq 0$ be the singular values of $A$ (possibly with zeros added to get exactly $m$ of them), then $\sigma_1^2,\dots,\sigma_m^2$ are the eigenvalues of $B=AA^T$. 

Define the auxiliary graph $H$ on $[m]$, where for $i,i'\in [m]$, $i\neq i'$, we put an edge between $i$ and $i'$ if there is some index $j\in [n]$ such that $N(i,j)=N(i',j)=1$. As $M$ is four cycle-free, there is at most one such index $j$ for every pair $(i,i')$. With this notation, we can write
$$B(i,i')=\frac{1}{fn}\begin{cases} d_i^2 &\mbox{ if }i=i'\\
                        \sqrt{d_id_{i'}} &\mbox{ if }i\sim i'\\
                        0 &\mbox{ otherwise.}\end{cases}$$
For $t=1,\dots,\lceil\log_3 n\rceil=:p$, let $I_t\subset [m]$ be the set of indices $i$ such that $3^{t-1}\leq d_i\leq 3^t$. Then $I_1,\dots,I_p$ forms a partition of $[m]$, and we note that $I_t$ is empty if $t\leq \log_3 d-1$. Let $B_t=B[I_t\times I_t]$, then $B_t$ is a principal submatrix of $B$.

\begin{claim}
At least $\Omega(|I_t|)$ eigenvalues of $B_t$ are at least $\Omega(3^{2t}/(fn))$. 
\end{claim}

\begin{proof}
Let $s=|I_t|$, $D=3^{t-1}$, and let $\lambda_1\geq \dots\geq \lambda_s\geq 0$ be the eigenvalues of $B_t$. Then 
\begin{equation}\label{equ:trace}
    \lambda_1+\dots+\lambda_s=\tr(B_t)=\frac{1}{fn}\sum_{i\in I_t}d_i^2\geq \frac{sD^2}{fn},
\end{equation}
and
$$\lambda_1^2+\dots+\lambda_s^2=||B_t||_F^2.$$
Here,
$$||B_t||_F^2=\sum_{i,i'\in I_t}B(i,i')^2=\frac{1}{(fn)^2}\left[\sum_{i\in I_t}d_i^4+\sum_{i\sim i',i,i'\in I_t}d_id_{i'} \right]\leq \frac{1}{(fn)^2}\left[81sD^{4}+18e(H[I_t])D^{2}\right],$$
where $e(H[I_t])$ denotes the number of edges of the subgraph of $H$ induced on the vertex set $I_t$. Let $G$ be the bipartite graph, whose bi-adjacency matrix is $N[I_t\times [n]]$. Then $e(H[I_t])$ is the number of pairs $\{i,i'\}\in I_t^{(2)}$ such that $i$ and $i'$ has a common neighbour in $G$. As every column of $N$ has at most $6d$ one entries, every vertex in $[n]$ has degree at most $6d$ in $G$. Thus, for each $i\in I_t$, there are at most $6dd_i\leq 18dD$ vertices $i'\in I_t$ which have a common neighbour with $i$. Therefore, $e(H[I_t])\leq 12dDs\leq 36D^2s$, where we used in the last inequality that $s=0$ unless $t\geq \log_3 d-1$. In conclusion, we proved that 
$$\lambda_1^2+\dots+\lambda_s^2\leq \frac{1000sD^4}{(fn)^2}.$$
Let $C=2000$, and let $r\leq s$ be the largest index such that $\lambda_r\geq \frac{CD^2}{fn}$. By the previous inequality, we have $r\leq \frac{1000s}{C^2}$. But then by the inequality between the arithmetic and square mean, we have $$\lambda_1+\dots+\lambda_r\leq r^{1/2}(\lambda_1^2+\dots+\lambda_r^2)^{1/2}\leq r^{1/2}\left(\frac{1000sD^4}{(fn)^2}\right)^{1/2}\leq \frac{sD^2}{2fn}.$$
Hence, comparing this with (\ref{equ:trace}), we deduce that
$$\lambda_{r+1}+\dots+\lambda_{s}\geq \frac{sD^2}{2fn}.$$
As $\frac{CD^2}{fn}\geq \lambda_{r+1}\geq\dots\geq \lambda_s$, this is only possible if at least $\frac{s}{4C}$ among $\lambda_{r+1},\dots,\lambda_s$ is at least $\frac{D^2}{4fn}$. This finishes the proof.
\end{proof}
As $B_t$ is a principal submatrix of $B$, its eigenvalues interlace the eigenvalues of $B$, see \cref{lemma:cauchy}. Therefore, the previous claim implies that at least $c|I_t|$ eigenvalues of $B$ are at least $c3^{2t}/(fn)$ for some absolute constant $c>0$, and thus at least $c|I_t|$ singular values of $A$ are at least $c3^t/\sqrt{fn}$. 

We are almost done. Note that 
$$f=\sum_{i=1}^md_i\leq \sum_{t=1}^p 3^{t+1}|I_t|.$$
In order to bound $||A||_{\tr}=\sigma_1+\dots+\sigma_m$, we observe that for every $t$, if $|I_t|\geq \max\{|I_{t+1}|,\dots,|I_p|\}=:z_t$, then $$\sum_{i=z_t+1}^{|I_t|}\sigma_i\geq \frac{c3^t}{\sqrt{fn}}\cdot (c|I_t|-c|I_{t+1}|-\dots-c|I_p|).$$ Hence, 
\begin{align*}
||A||_{\tr}&=\sum_{i=1}^m \sigma_i\geq \sum_{t=1}^p \frac{c3^t}{\sqrt{fn}}\cdot (c|I_t|-c|I_{t+1}|-c|I_2|-\dots-c|I_{p}|)\\
&\geq \frac{c^2}{\sqrt{fn}}\sum_{t=1}^p |I_t|(3^t-3^{t-1}-\dots-3-1) \geq \frac{c^2}{2\sqrt{fn}}\sum_{t=1}^p3^t |I_t|\geq \frac{c^2\sqrt{f}}{6\sqrt{n}}.
\end{align*}
Here, in the first inequality, we use that if $|I_t|< z_t$, then the contribution of the $t$-th term in the sum is anyway negative. Finally, recall that every column of $N$ contains at least $d/2$ one entries, so $f\geq dn/2$. Therefore, $||A||_{\tr}\geq \frac{c^2}{12}\sqrt{d}$, finishing the proof.
\end{proof}

\begin{proof}[Proof of \cref{thm:four_cycle}]
First, we prove that  $\gamma_2(M)\leq 2\sqrt{d}$. Let $G$ be the bipartite graph, whose bi-adjacency matrix is $M$, and let $A$ and $B$ be the vertex classes of $G$, where $A$ corresponds to the rows of $M$, and $B$ corresponds to the columns. Then there is an ordering $<$ of $A\cup B$ such that every vertex $v\in A\cup B$ has at most $d$ neighbours larger than $v$ in this ordering. Let $G_1$ be the subgraph of $G$ on vertex set $A\cup B$ in which we keep those edges $ab$ with $a\in A$ and $b\in B$, where $a<b$, and let $G_2$ be the graph containing the rest of the edges. Let $M_i$ be the bi-adjacency matrix of $G_i$ for $i=1,2$, then $M=M_1+M_2$, every row of $M_1$ has at most $d$ one entries, and every column of $G_2$ has at most $d$ one entries. In particular, $\gamma(M)\leq \gamma(M_1)+\gamma(M_2)\leq 2\sqrt{d}$.

Second, we prove that $\gamma_2(M)=\Omega(\sqrt{d})$. If we let $N$ be a submatrix of $M$ of minimum degree $d$, then the average degree of $N$ is at least $d$. Hence, \cref{lemma:C4-free} implies the desired lower bound by noting that $\gamma_2(M)\geq \gamma_2(N)$. 
\end{proof}

Finally, we prove \cref{thm:zarankiewicz}. The key graph theoretical result tying \cref{thm:four_cycle} and \cref{thm:all_zero} together is the following lemma of \cite{HMST0}.

\begin{lemma}[Hunter, Milojevi\'c, Sudakov, Tomon \cite{HMST0}]\label{lemma:dense_C4}
For every $k$ there exists $c=c(k)>0$ such that the following holds. Let $G$ be a bipartite graph of average degree $d$, which contains no four cycle-free induced subgraph with average degree at least $k$. Then $G$ contains a subgraph on at most $d$ vertices with at least $cd^2$ edges.
\end{lemma}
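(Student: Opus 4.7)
The plan is to prove the contrapositive: assume the bipartite graph $G$ has average degree at least $d$ and contains no subgraph on at most $d$ vertices with at least $c d^2$ edges, where $c = c(k) > 0$ is a small constant to be fixed at the end, and construct a four cycle-free induced subgraph of $G$ of average degree at least $k$.

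First I would apply a standard cleanup. Iteratively delete vertices of degree below one quarter of the current average to pass to an induced subgraph $G_0$ of minimum degree at least $d/2$; then cap the maximum degree at $O(d)$ by removing any vertex of degree exceeding, say, $4d$, using a standard averaging argument to ensure the average degree is preserved up to a constant factor. If the cap removal itself were to destroy too many edges, one could locally extract a dense subgraph on at most $d$ vertices violating the hypothesis, so this step is consistent with the assumption.

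Next I would perform a $C_4$-cleaning step. Define the \emph{$C_4$-weight} of a vertex $v$ as the number of vertices $u$ on the same side as $v$ sharing at least two common neighbors with $v$; equivalently, the number of $u$ with which $v$ forms a four cycle. The key quantitative claim is that, under the hypothesis, no vertex can have $C_4$-weight exceeding $\alpha d$ for a threshold $\alpha = \alpha(k)$ to be chosen. Indeed, if $v$ has $C_4$-weight exceeding $\alpha d$, then $N(v)$ (of size $O(d)$ by the degree cap) together with the $\alpha d$ conflict vertices forms a bipartite graph in which each conflict vertex contributes at least two edges into $N(v)$, yielding $\Omega(\alpha d^2)$ edges on $O(d)$ vertices, contradicting the hypothesis once $c$ is small relative to $\alpha$. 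Delete all vertices of large $C_4$-weight; by double counting this removes only a small fraction of edges, and the surviving subgraph $G_1$ retains average degree $\Omega(d)$.

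In $G_1$ every vertex has $C_4$-weight at most $\alpha d$, so a straightforward greedy procedure works: scan the vertices in an arbitrary order, select each into a set $S$, and at each selection ban from future consideration the at most $\alpha d$ vertices that would close a $C_4$ with the new vertex together with some previously selected vertex. Choosing $\alpha$ of order $1/k$ ensures that a positive fraction of vertices survives, and the induced subgraph on $S$ is by construction four cycle-free and retains average degree at least $k$.

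The main obstacle is the conversion inside the $C_4$-cleaning step, namely turning large $C_4$-weight at a single vertex $v$ into a genuine dense subgraph on at most $d$ vertices. A priori, the conflict vertices plus the witness neighbors live in a two-hop neighborhood of $v$, whose total size can exceed $d$. The proposed fix is to exploit the degree cap from the first step to force $|N(v)| = O(d)$, and then note that one only needs $O(d)$ conflict vertices to witness the dense pattern: by a K\H{o}v\'ari--S\'os--Tur\'an-style averaging, one can localize the pattern into a subset of exactly at most $d$ vertices at the cost of a constant factor in the edge count. Balancing the three parameters $\alpha(k)$, the degree-cap threshold, and $c(k)$ to make this quantitative argument go through is the most delicate bookkeeping in the proof.
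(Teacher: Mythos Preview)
The paper does not prove this lemma; it is cited from \cite{HMST0} and used as a black box in the proof of \cref{thm:zarankiewicz}. So there is no in-paper argument to compare against, and your sketch must be judged on its own.

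It has two genuine gaps. The first is an arithmetic error in the $C_4$-cleaning step. You assert that if $v$ has $\alpha d$ conflict vertices, each sharing at least two neighbors with $v$, then $N(v)$ together with these conflict vertices carries $\Omega(\alpha d^2)$ edges. But ``at least two'' edges per conflict vertex gives only $2\alpha d$ edges in total; adding the $|N(v)|=O(d)$ edges from $v$ still leaves $O(d)$ edges on $O(d)$ vertices, a full factor of $d$ short of the $c d^2$ you need. Many four-cycles through a single vertex simply do not force a quadratically dense patch; for that you would need $\Omega(d)$ vertices on one side each sending $\Omega(d)$ edges into a fixed $O(d)$-set on the other side, which is a much stronger configuration than bounded $C_4$-weight. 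The K\H{o}v\'ari--S\'os--Tur\'an averaging you invoke at the end cannot manufacture the missing factor of $d$.

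The second gap is the greedy step. Each accepted vertex bans up to $\alpha d$ others, so with $\alpha$ a fixed constant and $d$ large you retain only about $n/(\alpha d)$ vertices, not a positive fraction as claimed. Worse, even granting $|S|=\Omega(n)$, nothing in the procedure controls the edge count of $G[S]$: the banning rule eliminates four-cycles but is silent on edge density, and an induced subgraph on a constant fraction of the vertices can be edgeless. The assertion that $G[S]$ ``retains average degree at least $k$'' has no justification.
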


\begin{proof}[Proof of \cref{thm:zarankiewicz}]
Let $G$ be the bipartite graph, whose bi-adjacency matrix is $M$, and let $d\geq Ct$ be the average degree of $G$, where $C=C(\gamma)$ is specified later. By \cref{thm:four_cycle}, there exists $c_0>0$ such that $G$ contains no four-cycle free induced subgraph with average degree at least $k=c_0\gamma^2$. If we let $c=c(k)$ be the constant guaranteed by \cref{lemma:dense_C4}, then $G$ contains a subgraph $H$ with at most $d$ vertices, and at least $cd^2$ edges. Then $H$ corresponds to a $d_1\times d_2$ submatrix $N$ of $M$ with at least $cd^2$ one entries, where $d_1+d_2\leq d$. We want to find a large all-ones submatrix of $N$. We may assume that $d_1=d_2=d$ by adding all-zero rows and columns to $N$. If we let $N'=J-N$, then $\gamma_2(N')\leq \gamma+1$, and $N'$ has at least $cd^2$ zero entries. In other words, $p(N')\leq 1-c$. By applying \cref{lemma:sparsifying} to $N'$ with $\varepsilon=c$ and $\delta=1/2$, we can find an $\alpha d\times \alpha d$ sized submatrix $N''$ of $N$ such that $\alpha=2^{-O(\gamma/c^2)}$ and $p(N'')<1/2$. If we let $n'=\alpha d$, then \cref{thm:all_zero} implies that $N''$ contains a $t'\times t'$ all-zeros submatrix for some $t'=n'2^{-O(\gamma^3)}\geq \lambda d$, where $\lambda=\lambda(\gamma)>0$ only depends on $\gamma$. In conclusion, choosing $C=1/\lambda$, we have $t'\geq t$, and thus  $M$ contains a $t\times t$ all-ones submatrix.
\end{proof}

\section{Graphs of bounded smallest eigenvalue}\label{sect:smallest_eig}

In this section, we prove  \cref{thm:smallest_eigenval}. First, we present a technical lemma, which we use to construct a large class of graphs that are forbidden in graphs of bounded smallest eigenvalue.

\begin{lemma}\label{lemma:forbidden_subgraph}
Let $H$ be a graph satisfying the following. The vertex set of $H$ can be partitioned into three $k$ element sets $A,B,C$ such that there are no edges between $A$ and $B$, but the bipartite subgraph between $A\cup B$ and $C$ is complete (and we make no assumption about the edges in $A,B$ or $C$). Then the smallest eigenvalue of $H$ is  at most $-k/3$.
\end{lemma}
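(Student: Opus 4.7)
My plan is to bound the smallest eigenvalue of $H$ from above via the Rayleigh quotient, using a carefully chosen test vector that exploits the bipartite complete structure between $A \cup B$ and $C$ while being agnostic to the unknown edges inside $A$, $B$, and $C$.

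Let $M$ denote the adjacency matrix of $H$, with rows/columns indexed by $A \cup B \cup C$. I would try the vector $v \in \mathbb{R}^{3k}$ whose entries are $+1$ on $A$, $+1$ on $B$, and $-1$ on $C$. Then $\|v\|^2 = 3k$, and writing
\[
v^T M v = \sum_{ij \in E(H)} 2 v_i v_j,
\]
the contributions split by edge type. Edges inside $A$, $B$, or $C$ each contribute $+2$ per edge (since both endpoints receive the same sign), while edges between $A$ and $B$ contribute nothing (there are none by hypothesis). The crucial gain comes from the $2k^2$ edges going between $A \cup B$ and $C$: each contributes $2 \cdot (+1) \cdot (-1) = -2$. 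Thus
\[
v^T M v = 2 \bigl(e(A) + e(B) + e(C)\bigr) - 4k^2.
\]

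The key point is that I have no information about $e(A), e(B), e(C)$, but the trivial upper bound $e(A), e(B), e(C) \leq \binom{k}{2}$ is already enough. Substituting gives
\[
v^T M v \;\leq\; 3 k(k-1) - 4k^2 \;=\; -k^2 - 3k.
\]
Then by Rayleigh's principle,
\[
\lambda_{\min}(M) \;\leq\; \frac{v^T M v}{\|v\|^2} \;\leq\; \frac{-k^2 - 3k}{3k} \;=\; -\frac{k}{3} - 1 \;\leq\; -\frac{k}{3},
\]
which is the desired bound.

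There is no real obstacle here: the only thing that required a moment's thought was choosing the sign pattern on the three parts so that the two complete bipartite pieces contribute negatively while the uncontrolled intra-part edges contribute positively but are dominated. Giving $A$ and $B$ the same sign (rather than opposite signs) is essential — the opposite choice zeros out the $A$–$C$ and $B$–$C$ cross terms and fails completely. One could also rescale the $C$-block by a parameter $t$ and optimize, but $t = 1$ already yields the stated bound with room to spare, so no optimization is needed.
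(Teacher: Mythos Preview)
Your proof is correct and takes essentially the same approach as the paper: the same test vector (constant $+1$ on $A\cup B$, $-1$ on $C$) and the Rayleigh quotient. The only cosmetic difference is that you bound the intra-part edge counts by $\binom{k}{2}$ rather than $k^2/2$, which yields the marginally stronger $\lambda_{\min}\le -k/3-1$ instead of the paper's $-k/3$.
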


\begin{proof}
Let $A$ be the adjacency matrix of $H$, and let $x\in\mathbb{R}^{V(H)}$ be the vector defined as $x(v)=1/\sqrt{3k}$ if $v\in A\cup B$, and $x(v)=-1/\sqrt{3k}$ if $v\in C$. Observe that $||x||=1$. Let $-\lambda$ be the smallest eigenvalue of $A$, then
\begin{align*}
    -\lambda&\leq x^TAx= \frac{2}{3k}(e(G[A])+e(G[B])+e(G[C])+e(G[A,B])-e(G[A\cup B,C]))=\\
    &=\frac{2}{3k}(e(G[A])+e(G[B])+e(G[C])-2k^2)\\
    &\leq \frac{2}{3k}\left(\frac{3k^2}{2}-2k^2\right)=-\frac{k}{3}.
\end{align*}

\end{proof}

Next, we show that the adjacency matrix of a graph with bounded smallest eigenvalue has bounded $\gamma_2$-norm as well.

\begin{lemma}\label{lemma:smallest_eigenvalue}
Let $\lambda>0$, let $G$ be a graph and let $A$ be the adjacency matrix of $G$. If the smallest eigenvalue of $G$ is at least $-\lambda$, then $\gamma_2(A)\leq 2\lambda$.
\end{lemma}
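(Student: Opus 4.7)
My plan is to use the standard trick of shifting $A$ so it becomes positive semidefinite, writing the shifted matrix as a Gram matrix, and then estimating the $\gamma_2$-norm via the explicit factorization.

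First, since the smallest eigenvalue of $A$ is at least $-\lambda$, the matrix $A+\lambda I$ is positive semidefinite. Therefore there exists a real matrix $B$ such that $A+\lambda I = B^T B$. The diagonal entries of $A+\lambda I$ all equal $\lambda$ (the diagonal of the adjacency matrix is zero), which means that every column of $B$ has $\ell_2$-norm exactly $\sqrt{\lambda}$.

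Now I use the factorization $A+\lambda I = (B^T)(B)$ to bound the $\gamma_2$-norm: the rows of $B^T$ are precisely the columns of $B$, so $\|B^T\|_{\row}=\|B\|_{\col}=\sqrt{\lambda}$, and hence
\[
\gamma_2(A+\lambda I) \;\leq\; \|B^T\|_{\row}\,\|B\|_{\col} \;=\; \lambda.
\]

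Finally, by subadditivity of $\gamma_2$ (property (3) in \cref{sec:gamma2-properties}) together with $\gamma_2(\lambda I) = \lambda\,\gamma_2(I) = \lambda$ (the identity is a permutation matrix, and $\gamma_2$ scales linearly under scalar multiplication by (1)), we obtain
\[
\gamma_2(A) \;\leq\; \gamma_2(A+\lambda I) + \gamma_2(\lambda I) \;\leq\; \lambda + \lambda \;=\; 2\lambda.
\]
There is no real obstacle here; the only point worth checking carefully is that the diagonal of $A+\lambda I$ is exactly $\lambda$ (so that columns of $B$ have norm $\sqrt{\lambda}$, not merely at most $\sqrt{\lambda}$), which follows immediately from the fact that $G$ has no loops.
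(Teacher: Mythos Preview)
Your proof is correct and follows essentially the same approach as the paper: shift $A$ by $\lambda I$ to make it positive semidefinite, observe that $\gamma_2(A+\lambda I)\le\lambda$, and then apply subadditivity together with $\gamma_2(-\lambda I)=\lambda$. The only cosmetic difference is that the paper quotes Schur's 1911 result that $\gamma_2(M)=\max_i M_{i,i}$ for positive semidefinite $M$, whereas you reprove the needed inequality directly via the Gram factorization $A+\lambda I=B^TB$; your argument is in fact exactly the easy direction of Schur's theorem.
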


\begin{proof}
If $M$ is an $n \times n$ symmetric positive semidefinite matrix, it is not hard to show that $\gamma_2(M) = \max_{1\leq i\leq n}{M_{i,i}}$, which was first proved by Schur \cite{Sch} in 1911. Therefore, as $A+\lambda I$ is positive semidefinite, we have $\gamma_2(A + \lambda I) = \lambda$. By the subadditivity of the norm, $$\gamma_2(A) \leq \gamma_2(A + \lambda I) + \gamma_2(-\lambda I) = 2 \lambda.$$
\end{proof}

\begin{proof}[Proof of  \cref{thm:smallest_eigenval}]
We may assume that $d$ is sufficiently large with respect to $\lambda$, otherwise the statement is trivial. Let $A$ be the adjacency matrix of $G$, then $\gamma_2(A)\leq 2\lambda$. Hence, by  \cref{thm:zarankiewicz}, $A$ contains a $t\times t$ all-ones submatrix for some $t=\Omega_{\lambda}(d)$. But then this corresponds to a complete bipartite subgraph in $G$ with vertex classes $X,Y$ such that $|X|=|Y|=t$. We assume that $d$ is sufficiently large such that $t\geq 3\lambda+1$ is satisfied. 

\begin{claim}
   If $k=\lceil3\lambda+1\rceil$, then $X$ cannot contain two disjoint $k$ element sets with no edges between them.
\end{claim}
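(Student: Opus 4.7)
The plan is to derive a contradiction with the spectral hypothesis by applying \cref{lemma:forbidden_subgraph} to an induced subgraph supplied by the complete bipartite structure between $X$ and $Y$, and then invoking the Cauchy interlacing theorem.

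Suppose for contradiction that $X$ contains two disjoint $k$-element subsets $A,B$ with no edges between them. Since $|Y|=t\geq 3\lambda+1\geq k$, we can pick an arbitrary $k$-element subset $C\subseteq Y$. Consider the induced subgraph $H=G[A\cup B\cup C]$. By construction, $A,B,C$ are pairwise disjoint $k$-element sets, there are no edges between $A$ and $B$ (by the contradiction hypothesis), and because $X\cup Y$ spans a complete bipartite graph in $G$, every vertex of $A\cup B\subseteq X$ is adjacent in $H$ to every vertex of $C\subseteq Y$. Thus $H$ meets the hypotheses of \cref{lemma:forbidden_subgraph}.

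Applying \cref{lemma:forbidden_subgraph}, the smallest eigenvalue of $H$ is at most $-k/3$. Since $H$ is an induced subgraph of $G$, its adjacency matrix is a principal submatrix of $A$, so by the Cauchy interlacing theorem (\cref{lemma:cauchy}) the smallest eigenvalue of $G$ is at most the smallest eigenvalue of $H$, hence at most $-k/3$. But $k=\lceil 3\lambda+1\rceil$ gives $-k/3\leq -\lambda-\tfrac{1}{3}<-\lambda$, contradicting the assumption that the smallest eigenvalue of $G$ is at least $-\lambda$. This contradiction establishes the claim.

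The argument is essentially a one-step reduction, and no genuine obstacle arises: the only things to check are that $Y$ is large enough to supply the required set $C$ (which follows from the bound $t\geq 3\lambda+1$ established just before the claim) and that interlacing is being applied correctly to pass from the eigenvalue bound on the induced subgraph $H$ back to the global eigenvalue bound on $G$.
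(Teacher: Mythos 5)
Your proof is correct and follows essentially the same argument as the paper: pick an arbitrary $k$-element subset $C\subseteq Y$, observe that $H=G[A\cup B\cup C]$ satisfies the hypotheses of \cref{lemma:forbidden_subgraph}, and combine its conclusion with Cauchy interlacing (\cref{lemma:cauchy}) to contradict the lower bound on the smallest eigenvalue of $G$. You also spell out the verification that $|Y|\geq k$, which the paper leaves implicit.
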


\begin{proof}
    Assume to the contrary that there exist $A,B\subset X$ with no edges between them, and $|A|=|B|=k$. If $C$ is any $k$ element subset of $Y$, then $A\cup B\cup C$ spans a subgraph $H$ of $G$ fitting the  description of \cref{lemma:forbidden_subgraph}. Thus the smallest eigenvalue of $H$ is at most $-k/3< -\lambda$. But then as $H$ is an induced subgraph of $G$, the Cauchy interlacing theorem (\cref{lemma:cauchy}) implies that the smallest eigenvalue of $G$ is less than $-\lambda$, contradiction.
\end{proof}
Let $G'$ the complement of $G[X]$, and let $D$ be the average degree of $G'$. The adjacency matrix of $G'$ has $\gamma_2$-norm at most $2\lambda+2$. Hence, applying \cref{thm:zarankiewicz} to $G'$, we deduce that $G'$ contains a complete bipartite graph with vertex classes of size $u=\Omega_{\lambda}(D)$. But this gives two $u$-element sets in $G[X]$ with no edges between them. Therefore, by the previous claim, $u\leq 3\lambda+1$, and in particular $D=O_{\lambda}(1)$. Applying Tur\'an's theorem to $G[X]$, we conclude that $G[X]$ contains a complete subgraph of size at least
$$\frac{|X|}{D+1}=\Omega_{\lambda}(d),$$
finishing the proof.
\end{proof}

\section{Inverse theorem for MaxCut}\label{sect:maxcut}

In this section, we prove  \cref{thm:main_MaxCut}. Let us introduce some notation. Given a graph $G$, the \emph{MaxCut} of $G$ is the maximum number of edges in a \emph{cut} of $G$, where a cut is a partition of $V(G)$ into two sets, with all edges having exactly one endpoint in both parts. We denote by $\mc(G)$ the MaxCut of $G$, and furthermore, if $G$ has $m$ edges, we define the \emph{surplus} of $G$ as 
$$\surp(G)=\mc(G)-m/2.$$
In case the vertices of $G$ are partitioned randomly, the expected number of edges in the resulting cut is $m/2$. This implies that $\surp(G)\geq 0$. The following is equivalent to  \cref{thm:main_MaxCut}. 

\begin{theorem}\label{thm:main_maxcut}
Let $G$ be a  graph with $m$ edges such that $\surp(G)\leq \alpha \sqrt{m}$. Then $G$ contains a clique of size at least $2^{-O(\alpha^9)}\sqrt{m}$.
\end{theorem}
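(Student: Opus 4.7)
The proof combines two ingredients: a spectral lower bound relating the surplus of $G$ to the trace norm of its adjacency matrix $A$, and the trace-norm version of the main structural result, \cref{cor:trace}. The strategy is to reduce from the surplus hypothesis to a bound on the normalized trace norm of $A$, invoke \cref{cor:trace} to find a large all-ones or all-zeros submatrix of $A$, and convert this substructure into the desired clique.

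The key technical step is to establish, for an arbitrary graph $G$ with adjacency matrix $A$,
\begin{equation}\label{eq:plan-surp-trace}
    \surp(G) \geq c\,\|A\|_{\tr},
\end{equation}
for some absolute constant $c>0$; this sharpens a bound of \cite{RatyTomon}. I would prove \eqref{eq:plan-surp-trace} via spectral methods combined with randomized rounding. Using the identity $\surp(G) = -\tfrac14 \min_{y\in\{\pm 1\}^n} y^T A y$, decompose $A = A_+ - A_-$ into its positive and negative spectral parts; since $\tr(A)=0$, we have $\tr(A_+) = \tr(A_-) = \|A\|_{\tr}/2$. A $\pm 1$ vector $y$ with $y^T A y \leq -\Omega(\|A\|_{\tr})$ can then be constructed by sign-rounding a suitable Gaussian vector aligned with the eigenbasis of the PSD matrix $A_-$, in the spirit of Grothendieck's or Nesterov's inequality.

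Given \eqref{eq:plan-surp-trace} and the hypothesis $\surp(G) \leq \alpha\sqrt m$, one has $\|A\|_{\tr} \leq O(\alpha\sqrt m)$. Since $G$ has $n \geq \sqrt{2m}$ vertices, the normalized trace norm satisfies $\|A\|_{\tr}/n \leq O(\alpha)$, so \cref{cor:trace} produces a $\delta_1 n \times \delta_2 n$ all-ones or all-zeros submatrix $A[R\times C]$ with $\delta_1, \delta_2 \geq 2^{-O(\alpha^3)}$. In the all-ones case, $R \cap C = \emptyset$ (as $A$ has zero diagonal) and $G$ contains a biclique $K_{|R|,|C|}$; placing $R,C$ on opposite sides of the cut forces $\surp(G) \geq |R||C|/2 \geq 2^{-O(\alpha^3)} n^2/2$, which, together with $\surp(G) \leq \alpha\sqrt m$ and $n \geq \sqrt{2m}$, implies $m \leq 2^{O(\alpha^3)}$. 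In this regime the theorem is vacuous, as $2^{-O(\alpha^9)}\sqrt m = O(1)$ and any single edge already provides a clique of size $2$.

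The main obstacle is the all-zeros case, in which we obtain disjoint sets $R, C$ of size $\geq 2^{-O(\alpha^3)} n$ with no edges between them; this structure does not directly yield a clique. My plan is to iterate the argument on carefully chosen induced subgraphs of $G$, exploiting that surplus is monotone under induced subgraphs (so \eqref{eq:plan-surp-trace} continues to control the trace norms of principal submatrices of $A$, albeit with an inflated effective $\alpha$). A further application of \cref{cor:trace} at each stage yields either another biclique block, whose vertices combine with a ``stacked'' clique structure across levels to contribute a new clique vertex per level, or another anti-biclique, which we recurse into. The exponent $\alpha^9 = (\alpha^3)^3$ suggests three nested applications of \cref{cor:trace}; the most delicate point is keeping the effective $\alpha$ bounded through these iterations and then converting the final dense subgraph (one that no longer admits a large anti-biclique) into a clique via Tur\'an's theorem.
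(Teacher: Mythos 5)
Your high-level strategy is on the right track -- use a surplus-vs-trace-norm inequality, then invoke the structural theorem -- but there are two genuine errors in the execution, and the paper avoids the dichotomy you are wrestling with altogether.

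First, your treatment of the all-ones case is incorrect. Finding an all-ones submatrix $A[R\times C]$ gives a biclique $K_{|R|,|C|}$, but this does \emph{not} force $\surp(G)\geq |R||C|/2$. The correct lower bound from \cref{lemma:surplus_sum} is $\surp(G)\geq \surp(G[R\cup C]) \geq |R||C| - \frac{1}{2}\bigl(|R||C|+e(G[R])+e(G[C])\bigr)$, which collapses when $R$ and $C$ each induce dense graphs; indeed if $G[R\cup C]$ is a complete graph on $2t$ vertices, the surplus is only about $t/2$, not $t^2/2$. So you cannot conclude the theorem is vacuous in this case. In fact the all-ones case is the \emph{productive} branch: the small surplus of $G[X\cup Y]$ combined with the cut $(X,Y)$ forces the complement of $G[X]$ to have few edges, and Tur\'an's theorem then extracts a clique of size $\Omega(t)$ inside $X$. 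This is precisely how the paper closes.

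Second, you do not need to handle the all-zeros case at all, which is where your proposal trails off into an undeveloped iteration. The key observation is that $\surp(G)\leq\alpha\sqrt m$ together with \cref{lemma:surp_n} forces $n\leq 6\alpha\sqrt m$, so $A$ already has one-density $\Omega(1/\alpha^2)$. Rather than applying \cref{cor:trace} directly (which may hand you the useless all-zeros branch), the paper applies \cref{lemma:trace_to_gamma} with $\varepsilon=\Theta(1/\alpha^2)$ to get a large principal submatrix $B$ of still-constant density with $\gamma_2(B)=O(\alpha^3)$, then applies \cref{lemma:sparsifying} to $J-B$ to boost the one-density of $B$ above $1/2$, and finally applies \cref{thm:all_zero} to $J-B'$. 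This \emph{always} produces an all-ones block of size $2^{-O(\alpha^9)}\sqrt m$. The exponent $\alpha^9=(\alpha^3)^3$ comes from feeding $\gamma=O(\alpha^3)$ into the $2^{-O(\gamma^3)}$ bound of \cref{thm:all_zero}, not from three nested applications of \cref{cor:trace} as you guessed. Your sketch for proving the surplus-energy inequality itself (semidefinite rounding in the spirit of Goemans--Williamson / Grothendieck) is in line with the paper's \cref{lemma:surplus_energy}.
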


We prepare the proof of this theorem by collecting a few simple properties of the surplus.

\begin{lemma}\label{lemma:surplus_sum}
Let $G$ be a graph, and let $V_1,\dots,V_k$ be disjoint subsets of $V(G)$. Then
$$\surp(G)\geq \sum_{i=1}^k\surp(G[V_i]).$$
\end{lemma}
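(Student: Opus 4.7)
The plan is to use the probabilistic method. For each $i$, fix a maximum cut $(A_i, B_i)$ of $G[V_i]$, so this internal cut has $e(G[V_i])/2 + \surp(G[V_i])$ edges. I will now construct a random cut $(X, Y)$ of $G$ that uses these optimal internal cuts while still treating all other edges uniformly.

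The construction is as follows. For each $i \in [k]$, independently flip a fair coin $\varepsilon_i \in \{0,1\}$; if $\varepsilon_i = 0$, place $A_i$ in $X$ and $B_i$ in $Y$, otherwise swap. For every vertex $v \in V(G) \setminus \bigcup_i V_i$, independently place $v$ in $X$ or $Y$ uniformly at random. Now classify the edges of $G$. First, every edge inside some $G[V_i]$ contributes to the cut if and only if it is in the cut $(A_i, B_i)$, regardless of $\varepsilon_i$, so these edges contribute exactly $\sum_i \mc(G[V_i])$ to the cut. Second, for any edge $uv$ with endpoints in different $V_i$'s, or with at least one endpoint outside $\bigcup_i V_i$, the sides of $u$ and $v$ are determined by independent uniform random bits, so the probability that $uv$ crosses the cut is exactly $1/2$.

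Setting $m = e(G)$ and $m_i = e(G[V_i])$, we obtain
\[
\mathbb{E}[|E(X,Y)|] = \sum_{i=1}^k \mc(G[V_i]) + \frac{1}{2}\left(m - \sum_{i=1}^k m_i\right) = \frac{m}{2} + \sum_{i=1}^k \surp(G[V_i]).
\]
Hence some realization achieves a cut of size at least $m/2 + \sum_i \surp(G[V_i])$, which gives $\mc(G) \geq m/2 + \sum_i \surp(G[V_i])$, i.e.\ the desired inequality.

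There is no real obstacle here; the only subtlety is to ensure that the random flips make the edges between distinct $V_i$'s and the edges touching the ``outside'' behave like independent uniform random bits, which is exactly why I randomize the orientation of each internal cut rather than fixing it.
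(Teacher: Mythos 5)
Your proof is correct and is essentially identical to the paper's: both fix a maximum cut $(A_i,B_i)$ of each $G[V_i]$, randomly flip the orientation of each such cut, place the remaining vertices uniformly at random, and compute the expected cut size to be $e(G)/2 + \sum_i \surp(G[V_i])$.
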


\begin{proof}
For $i=1,\dots,k$, let $(A_i,B_i)$ be a partition of $V_i$ witnessing the MaxCut of $G[V_i]$. Define the partition $(X,Y)$ of $V(G)$ as follows: for every $i\in [k]$, let either $A_i\subset X$ and $B_i\subset Y$ or $A_i\subset Y$ and $B_i\subset X$ independently with probability $1/2$. Also, add every vertex not contained in $\bigcup_{i=1}^k V_i$ to either $X$ or $Y$ with probability $1/2$. Observe that if $f$ is an edge of $G$ not contained in $G[V_i]$ for any $i\in [k]$, then $f$ is cut with probability $1/2$. Otherwise, if $f\subset V_i$, then $f$ is cut if and only if $f$ is cut by $(A_i,B_i)$. Hence, the expected size of the cut $(X,Y)$ is exactly $|E(G)|/2+\sum_{i=1}^k \surp(G[V_i]))$.
\end{proof}

\begin{lemma}[Erd\H{o}s, Gy\'arf\'as, Kohayakawa \cite{EGyK97}]\label{lemma:surp_n}
Let $G$ be a graph on $n$ vertices with no isolated vertices. Then $\surp(G)\geq n/6$.
\end{lemma}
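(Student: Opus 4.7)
My plan is to apply \cref{lemma:surplus_sum} twice, via two different partitions of $V(G)$ built around a maximum matching $M$ of $G$ with $\mu$ edges.

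The matching $M$ directly yields $\mu$ vertex-disjoint pairs, each inducing a $K_2$ with surplus $1/2$; by \cref{lemma:surplus_sum}, $\surp(G) \geq \mu/2$, which handles the case $\mu \geq n/3$. Assume henceforth $\mu < n/3$, so the unmatched set $U$ has size $|U| = n - 2\mu > n/3$, is independent by maximality of $M$, and each $u \in U$ has at least one neighbor in $V(M)$. I assign each $u \in U$ to some matching edge $e_u \in M$ adjacent to $u$, and set $C_e = e \cup \{u \in U : e_u = e\}$, so that $\{C_e\}_{e \in M}$ partitions $V(G)$.

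The key structural observation is an augmenting-path argument: for each $e = \{a,b\} \in M$, there cannot exist two distinct unmatched vertices $u_1, u_2$ with $u_1$ adjacent to $a$ only and $u_2$ adjacent to $b$ only, since otherwise $u_1, a, b, u_2$ would form an augmenting path contradicting the maximality of $M$. Hence for each $e \in M$, some endpoint (say $a$) is adjacent to every unmatched vertex in $C_e$. Setting $k_e = |C_e| - 2$, the induced subgraph $G[C_e]$ then contains a spanning star centered at $a$, together with possibly some extra edges between $b$ and the assigned unmatched vertices. A short case analysis on the two cuts $(\{a\}, V(C_e) \setminus \{a\})$ and $(\{a,b\}, V(C_e) \setminus \{a,b\})$ shows $\surp(G[C_e]) \geq k_e/2$, regardless of how many such extra edges are present.

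Applying \cref{lemma:surplus_sum} to the partition $\{C_e\}_{e \in M}$ then yields
\[
\surp(G) \geq \sum_{e \in M} \surp(G[C_e]) \geq \sum_{e \in M} \frac{k_e}{2} = \frac{n - 2\mu}{2} > \frac{n}{6},
\]
completing the argument. The only genuinely non-routine step is the local bound $\surp(G[C_e]) \geq k_e/2$, which depends entirely on the universal-vertex structure supplied by the augmenting-path observation; everything else is a direct application of \cref{lemma:surplus_sum} together with a routine balancing of the two cases at $\mu = n/3$.
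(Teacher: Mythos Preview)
The paper does not prove \cref{lemma:surp_n}; it is quoted from Erd\H{o}s, Gy\'arf\'as, and Kohayakawa \cite{EGyK97} without proof, so there is nothing in the paper to compare your argument against.

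That said, your proof is correct. The two applications of \cref{lemma:surplus_sum} are legitimate, and the augmenting-path step does exactly what you claim: if $u_1,u_2\in C_e\cap U$ are distinct with $u_1\sim a$, $u_1\not\sim b$, $u_2\sim b$, $u_2\not\sim a$, then $(M\setminus\{ab\})\cup\{u_1a,u_2b\}$ contradicts maximality, so one endpoint of $e$ dominates all of $C_e\cap U$. For the local bound, with $|S|$ the number of unmatched vertices in $C_e$ adjacent to $b$, the cut $(\{a\},C_e\setminus\{a\})$ gives surplus $(k_e+1-|S|)/2$ and the cut $(\{a,b\},C_e\setminus\{a,b\})$ gives surplus $(k_e+|S|-1)/2$; one of these is at least $k_e/2$ regardless of $|S|$. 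Summing $k_e$ over $e\in M$ gives $n-2\mu>n/3$, and the case $\mu\ge n/3$ is immediate from the matching edges alone. Your argument is in fact close in spirit to the original proof in \cite{EGyK97}, which also proceeds via a maximum matching.
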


The main technical lemma underpinning the proof of \cref{thm:main_maxcut} is the following result showing that the surplus can be lower bounded by the energy of $G$. The \emph{energy} of $G$ is the trace-norm of the adjacency matrix of $G$, or equivalently, the sum of absolute values of the adjacency matrix, and it is denoted by $\mathcal{E}(G)$.  A similar, but weaker result was proved recently by R\"aty and Tomon \cite{RatyTomon}, who showed that if $G$ has $n$ vertices, then $\surp(G)=\Omega(\mathcal{E}(G)/\log n)$. Here, we prove that the $1/\log n$ factor can be removed. Our proof follows the semidefinite programming approach of \cite{RatyTomon}. However, to remove the $1/\log n$ factor, which comes from an application of the Graph Grothendieck inequality \cite{AMMN}, we instead combine this idea with the MaxCut approximation approach of Goemans and Williamson \cite{GoemansWilliamson}. 

\begin{lemma}\label{lemma:surplus_energy}
Let $G$ be a graph. Then $\surp(G)=\Omega(\mathcal{E}(G))$.
\end{lemma}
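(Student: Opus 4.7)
The plan is to follow the semidefinite programming (SDP) approach of \cite{RatyTomon} and replace their use of the graph Grothendieck inequality by Goemans--Williamson (GW) rounding, which is what saves the $\log n$ factor. Write $A$ for the adjacency matrix of $G$ and encode a bipartition by $x\in\{-1,1\}^n$; since $x^T A x = 2m - 4\,\mathrm{cut}(x)$, one has $\surp(G) = -\tfrac14\min_{x\in\{-1,1\}^n}x^T A x$. Relaxing $xx^T$ to an arbitrary positive semidefinite $X$ with unit diagonal yields the SDP $4\,\mathrm{sdp}(G) := -\min_{X\succeq 0,\,\mathrm{diag}(X)=\mathbf{1}}\langle A,X\rangle$, and Lagrangian duality rewrites this as
\[
4\,\mathrm{sdp}(G) = \min\Bigl\{\sum_i d_i : d\in\mathbb{R}^n,\ A+\mathrm{diag}(d)\succeq 0\Bigr\}.
\]

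The first substantive step is to show $\mathrm{sdp}(G)\geq \mathcal{E}(G)/8$ via this dual. Since $A$ has zero diagonal, plugging $e_i$ into the constraint $A+\mathrm{diag}(d)\succeq 0$ forces $d_i\geq 0$. Plugging a unit eigenvector $v_k$ of $A$ with $\lambda_k<0$ gives $\sum_i d_i v_{k,i}^2 \geq |\lambda_k|$. Summing over $\lambda_k<0$ and using $\sum_{\lambda_k<0} v_{k,i}^2\leq 1$ together with $\mathrm{tr}(A)=0$, one obtains $\sum_i d_i \geq \sum_{\lambda_k<0}|\lambda_k| = \mathcal{E}(G)/2$. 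This is the SDP analogue of the bound in \cite{RatyTomon}.

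The second step is GW rounding. Let $X^* = (\langle u_i,u_j\rangle)_{ij}$ be the SDP optimum with unit vectors $u_i$; draw a standard Gaussian $g$ and set $x_i = \mathrm{sign}\langle g,u_i\rangle$. The GW identity $\mathbb{E}[x_ix_j] = \tfrac{2}{\pi}\arcsin X^*_{ij}$ gives
\[
\mathbb{E}[\surp(x)] = -\tfrac{1}{\pi}\sum_{ij\in E}\arcsin X^*_{ij},
\]
to be compared with $\mathrm{sdp}(G) = -\tfrac{1}{2}\sum_{ij\in E}X^*_{ij}$. The goal is to deduce a multiplicative bound $\mathbb{E}[\surp(x)] \geq c\cdot\mathrm{sdp}(G)$, which combined with Step~1 yields $\surp(G) = \Omega(\mathcal{E}(G))$.

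The main obstacle will be this last step. A blackbox application of the pointwise GW inequality $\arccos(t)/\pi \geq \alpha_{GW}(1-t)/2$ yields only $\mathbb{E}[\surp(x)] \geq \alpha_{GW}\mathrm{sdp}(G) - (1-\alpha_{GW})m/2$, and the additive $m$-term swamps the main term whenever $\mathcal{E}(G) \ll m$. Eliminating this additive loss is the technical heart of the lemma; I would do so by exploiting the structure of the SDP optimum, most naturally via the complementary-slackness identity $(A+\mathrm{diag}(d^*))X^* = 0$ (which constrains the support of $X^*$ to the null space of $A+\mathrm{diag}(d^*)$) to analyze $\sum_{ij\in E}\arcsin X^*_{ij}$ term-by-term. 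The delicate point is to control the contribution of edges $ij$ where $X^*_{ij}>0$, which is precisely the regime in which the $\arcsin$ rounding loses against the linear estimate; this is where the gain over R\"aty--Tomon must come from.
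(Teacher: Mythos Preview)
Your setup and Step~1 are correct: the dual argument showing $\mathrm{sdp}(G)\geq \mathcal{E}(G)/8$ is fine and essentially equivalent to the paper's computation $\langle A,M\rangle=-\mathcal{E}(G)/2$ for the negative-eigenspace projector $M$. The gap is in Step~2, and your own diagnosis of the obstacle is accurate: the pointwise GW inequality leaves an additive $O(m)$ which kills the argument. Your proposed fix via complementary slackness is too vague as stated, and I do not see how the identity $(A+\mathrm{diag}(d^*))X^*=0$ gives you any control on $\sum_{ij\in E}\arcsin X^*_{ij}$ when $X^*_{ij}$ may be large and of either sign on many edges.

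The paper avoids this difficulty not by analyzing the SDP optimum more carefully, but by \emph{abandoning} it: instead of rounding $X^*$, it rounds the explicit feasible point $M'$ obtained from the negative-eigenspace projector $M=\sum_{\lambda_i<0}v_iv_i^T$ by raising the diagonal to all ones (this stays PSD since $I-M\succeq 0$). The crucial extra property of this particular feasible point is $\|M'\|_F^2\leq 2n$. Now replace the GW pointwise inequality by the Taylor estimate $\arccos(t)=\pi/2-t-O(t^2)$; the expected cut becomes
\[
e(G)/2-\tfrac{1}{\pi}\langle A,M'\rangle-O\!\Bigl(\sum_{ij\in E}(M'_{ij})^2\Bigr)
\;=\;e(G)/2+\tfrac{\mathcal{E}(G)}{2\pi}-O(\|M'\|_F^2)
\;=\;e(G)/2+\tfrac{\mathcal{E}(G)}{2\pi}-O(n).
\]
The residual $O(n)$ is then absorbed using the Erd\H{o}s--Gy\'arf\'as--Kohayakawa bound $\surp(G)\geq n/6$. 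So the missing idea is twofold: round a specific feasible point with small Frobenius norm rather than the optimum, and use the quadratic Taylor remainder (plus $\surp(G)=\Omega(n)$) in place of any pointwise comparison.
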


\begin{proof}
We recall the following identity: if $x,y\in \mathbb{R}^n$, then $\langle xx^T,yy^T\rangle=\langle x,y\rangle^2$.

We may assume that $G$ contains no isolated vertices, as the removal of such vertices does not change the energy or the surplus. Let $\{1,\dots,n\}$ be the vertex set of $G$, and let $A$ be its adjacency matrix. Let $\lambda_1\geq \dots\geq \lambda_n$ be the eigenvalues of $A$ with a corresponding orthonormal basis of eigenvectors $v_1,\dots,v_n$. Define the symmetric positive semidefinite matrix
$$M=\sum_{i: \lambda_i<0} v_iv_i^T.$$
We make a number of observations about $M$. First of all, 
$$\langle A,M\rangle=\left\langle \sum_{i=1}^n \lambda_i v_iv_i^T,\sum_{i:\lambda_i<0}v_iv_i^T\right\rangle=\sum_{i=1}^n\sum_{j:\lambda_j<0}\lambda_i\langle v_i,v_j\rangle^2=\sum_{i:\lambda_i<0}\lambda_i=-\mathcal{E}(G)/2.$$
Moreover, 
$$||M||_F^2=\langle M,M\rangle=\sum_{i:\lambda_i<0}1\leq n.$$
Lastly, $M_{i,i}\leq 1$ for $i\in [n]$, as $I-M=\sum_{i:\lambda_i\geq 0}v_iv_i^T$ is positive semidefinite. Let $M'$ be the $n\times n$ matrix such that $M'_{i,i}=1$ for $i\in [n]$ and $M'_{i,j}=M_{i,j}$ if $i,j\in [n],i\neq j$. Then $M'$ is positive semidefinite, $\langle A,M'\rangle=\langle A,M\rangle$, and $||M'||_F^2\leq 2n$. In particular, there exist unit vectors $x_1,\dots,x_n\in\mathbb{R}^n$ such that $M'_{i,j}=\langle x_i,x_j\rangle$ for every $i,j\in [n]$.

Using the vectors $x_1,\dots,x_n$, we describe a probability distribution on the cuts of $G$ as follows. Let $\mathbf{H}$ be a random linear hyperplane in $\mathbb{R}^n$, then $\mathbf{H}$ cuts the $n$-dimensional space into two parts. We define the partition $(X,Y)$ of $V(G)$  such that $X$ contains the vertices corresponding to the vectors $x_1,\dots,x_n$ in one of the parts, while $Y$ is the set of vertices corresponding to the vectors in the other part. Formally, we take $\mathbf{u}$ randomly on the unit sphere $\mathbb{S}^{n-1}$ from the uniform distribution, and set $$X=\{i\in [n]: \langle x_i,\mathbf{u}\rangle\geq 0\}\mbox{\ \ \ and\ \ \ }Y=\{i\in [n]:\langle x_i,\mathbf{u}\rangle<0\}.$$ Given an edge $f=\{a,b\}$, consider the probability that $f$ is cut by $(X,Y)$. This probability can be calculated easily as follows: let $\alpha=\arccos(\langle x_a,x_b\rangle)$ be the angle between $x_a$ and $x_b$, then
$$\mathbb{P}(f\mbox{ is cut by }(X,Y))=\frac{\alpha}{\pi}.$$
By the Taylor expansion of the function $\arccos(.)$, we can write $$\arccos(t)=\frac{\pi}{2}-t-O(t^2).$$ The error term can be improved to $O(t^3)$, however, quadratic decay already suffices for our purposes. From this, we get
$$\mathbb{P}(f\mbox{ is cut by }(X,Y))=\frac{1}{2}-\frac{\langle x_a,x_b\rangle}{\pi}-O(\langle x_a,x_b\rangle^2).$$
Let $\mathbf{T}$ be the number of edges in the cut $(X,Y)$, then
\begin{align*}
\mathbb{E}(\mathbf{T})&=\sum_{\{a,b\}\in E(G)} \mathbb{P}(\{a,b\}\mbox{ is cut by }(X,Y))=\sum_{\{a,b\}\in E(G)}\frac{1}{2}-\frac{\langle x_a,x_b\rangle}{\pi}-O\left(\langle x_a,x_b\rangle^2\right)\\
&=\frac{e(G)}{2}-\frac{\langle A,M'\rangle}{\pi}-O(||M'||_F^2)=\frac{e(G)}{2}+\frac{\mathcal{E}(G)}{2\pi}-O(n).
\end{align*}
Therefore, as there exists a cut $(X,Y)$ with at least $\mathbb{E}(\mathbf{T})$ edges, we arrive to the inequality $$\surp(G)\geq \frac{\mathcal{E}(G)}{2\pi}-O(n).$$
By  \cref{lemma:surp_n}, we also have $\surp(G)=\Omega(n)$, which combined with the previous inequality gives the desired bound $\surp(G)=\Omega(\mathcal{E}(G))$.
\end{proof}

\begin{proof}[Proof of  \cref{thm:main_maxcut}]
Let $n$ be the number of vertices of $G$. Without loss of generality, we may assume that $G$ does not contain isolated vertices. Then, by  \cref{lemma:surp_n}, we have $\surp(G)\geq n/6$. Hence, $$\sqrt{m}\leq n\leq 6\alpha\sqrt{m},$$
where the first inequality is true for every graph, and the second inequality is due to our assumption $\surp(G)\leq \alpha\sqrt{m}$. By \cref{lemma:surplus_energy}, there exists a constant $c_0>0$ such that $c_0\surp(G)\geq \mathcal{E}(G)$, so $$\mathcal{E}(G)\leq c_0\alpha\sqrt{m}\leq c_0\alpha n.$$ Let $A$ be the adjacency matrix of $G$, then $\mathcal{E}(G)=||A||_{\tr}\leq c_0\alpha n$. Hence, by  \cref{lemma:trace_to_gamma}, for every $\varepsilon>0$, there exists a $n'\times n'$ submatrix $B$ of $A$ such that $\gamma_2(B)\leq c_0\alpha/\varepsilon$ and $n'\geq (1-\varepsilon)n$. Here, as the removal of $\varepsilon n$ rows and columns can destroy at most $2 \varepsilon n^2$ one entries, we have
$$|B|\geq |A|-2\varepsilon n^2\geq |A|-72\alpha^2\varepsilon m=m(2-72\alpha^2\varepsilon).$$
Choosing $\varepsilon=1/72\alpha^2$, we get a submatrix $B$ such that 
$$|B|\geq m\geq \frac{1}{36\alpha^2}(n')^2$$ and $\gamma:=\gamma_2(B)\leq 72c_0\alpha^3$. Next, we want to find a large submatrix $B'$ such that $p(B')\geq 1/2$. In order to do this, we apply  \cref{lemma:sparsifying}  to $J-B$. As $p(J-B)\leq 1-\frac{1}{36\alpha^2}$, we can find such a submatrix $B'$ of size $n''\times n''$, where $n''=2^{-O(\gamma\alpha^4)}n'=2^{-O(\alpha^7)}\sqrt{m}$.  Now we can apply \cref{thm:all_zero} to $J-B'$ to conclude that $B'$, and thus $A$, contains $t\times t$ sized all-ones submatrix with $t=2^{-O(\gamma^3)}n''=2^{-O(\alpha^9)}\sqrt{m}$.

This all-ones submatrix corresponds to a complete bipartite subgraph of $G$, let $X$ and $Y$ be its vertex classes, and note that $|X|=|Y|=t$. Let $H=G[X\cup Y]$. By \cref{lemma:surplus_sum}, $\surp(G)\geq \surp(H)$, so $\surp(H)\leq \alpha \sqrt{m}$. On the other hand, by considering the cut $(X,Y)$, we have $\mc(H)\geq t^2$, so
$$\surp(H)=\mc(H)-\frac{|E(H)|}{2}\geq t^2-\frac{t^2+e(G[X])+e(G[Y])}{2}=  \frac{t^2}{2}-\frac{e(G[X])}{2}-\frac{e(G[Y])}{2}.$$
Write $e(G[X])=\binom{t}{2}-x$ and $e(G[Y])=\binom{t}{2}-y$. Then using the inequality $\surp(H)\leq \alpha\sqrt{m}$, the previous inequality becomes
$$\alpha\sqrt{m}\geq \frac{t+x+y}{2}\geq \frac{x}{2}.$$
In particular, we showed that the complement of $G[X]$ has at most $2\alpha\sqrt{m}$ edges. By Tur\'an's theorem (\cref{thm:turan}), then $G[X]$ contains a clique of size at least
$$\frac{t^2}{4\alpha\sqrt{m}+t}=2^{-O(\alpha^9)}\sqrt{m}.$$
This finishes the proof.
\end{proof}

\section*{Acknowledgments} We would like to thank Hamed Hatami and Marcel Goh for sharing their upcoming manuscript with us, and for valuable discussions. LH is thankful to Nathan Harms for his feedback on an early draft of the paper, which helped improve the exposition.

\bibliographystyle{alpha}
\bibliography{maxnorm}

\end{document}